% !BIB TS-program = bibtex
% !TEX TS-program = pdflatexmk

%-------- first line forces latex to use bibtex, second line automates the whole process of running latex/bibtex (runs enough times to resolve all "links")

\RequirePackage[l2tabu, orthodox]{nag} %--- warns user of any use of obsolete latex commands/packages

\documentclass{amsart}

%------------------------------------------------------------------------------
%			packages
%------------------------------------------------------------------------------

\pdfoutput=1

\usepackage{bbm}
\usepackage[utf8]{inputenc} %makes UTF-8 (unicode) the default encoding
\usepackage[colorlinks, citecolor=red, breaklinks=true]{hyperref} %makes every reference a link in the PDF file
\usepackage[stretch=10, shrink=10]{microtype} %typographical package, gives more flexible spacing between characters
\usepackage{todonotes} %allows adding to-do notes
\usepackage{cases}
\usepackage{amscd, amssymb}  %ams packages
\usepackage{commath} %allows the use of "\dif" command to write differential d
\usepackage{mathtools} %cool math typesetting package!
\usepackage{doi}
%\usepackage{lineno} %adds line numbers on the side
%\linenumbers
\usepackage{xcolor}
\usepackage[initials,msc-links,non-compressed-cites,nobysame]{amsrefs}[2007/10/22]
\usepackage{subcaption}
\captionsetup[subfigure]{subrefformat=simple,labelformat=simple}

\usepackage{float}

\usepackage{paralist,enumitem}
%% I hate indented paras
\setlength{\parindent}{0pt}
\setlength{\parskip}{8pt}
\setlength{\normalparindent}{0pt} % amsart only
%% Compact lists slightly less compact
\setlength{\plparsep}{1pt}
\setlength{\plitemsep}{3pt}
\setlist{listparindent=0pt,parsep=3pt}
%% enumeration style
\setdefaultenum{1.}{}{}{}

%\addbibresource{darbouxFlow}

%\DeclareUrlCommand\doi{\def\UrlLeft##1\UrlRight{doi:\href{http://dx.doi.org/##1}{##1}}\urlstyle{rm}}

%% If we have a doi, link to it in the title.  Otherwise, if we have a url
%% and no eprint field, link to the url.  Otherwise, install
%% no link. 
\newcommand{\TitleWithUrl}[1]{\IfEmptyBibField{doi}%
  {\IfEmptyBibField{url}{\textit{#1}}%
    {\IfEmptyBibField{eprint}{\href {\BibField{url}}{\textit{#1}}}{\textit{#1}}}%
    }%
  {\href {https://doi.org/\BibField{doi}}{\textit{#1}}}}
\renewcommand{\eprint}[1]{\IfEmptyBibField{url}{\url{#1}}%
  {\href {\BibField{url}}{#1}}}

\BibSpec{article}{%
    +{}  {\PrintAuthors}                {author}
    +{,} { \TitleWithUrl}               {title}
    +{.} { }                            {part}
    +{:} { \textit}                     {subtitle}
    +{,} { \PrintContributions}         {contribution}
    +{.} { \PrintPartials}              {partial}
    +{,} { }                            {journal}
    +{}  { \textbf}                     {volume}
    +{}  { \PrintDatePV}                {date}
    +{,} { \issuetext}                  {number}
    +{,} { \eprintpages}                {pages}
    +{,} { }                            {status}
%    +{,} { \PrintDOI}                   {doi}
    +{,} { available at arXiv:\eprint}        {eprint}
    +{}  { \parenthesize}               {language}
    +{}  { \PrintTranslation}           {translation}
    +{;} { \PrintReprint}               {reprint}
    +{.} { }                            {note}
    +{.} {}                             {transition}
    +{}  {\SentenceSpace \PrintReviews} {review}
}

\BibSpec{collection.article}{%
    +{}  {\PrintAuthors}                {author}
    +{,} { \TitleWithUrl}                     {title}
    +{.} { }                            {part}
    +{:} { \textit}                     {subtitle}
    +{,} { \PrintContributions}         {contribution}
    +{,} { \PrintConference}            {conference}
    +{}  {\PrintBook}                   {book}
    +{,} { }                            {booktitle}
    +{,} { \PrintDateB}                 {date}
    +{,} { pp.~}                        {pages}
    +{,} { }                            {status}
%    +{,} { \PrintDOI}                   {doi} 
    +{,} { available at \eprint}        {eprint}
    +{}  { \parenthesize}               {language}
    +{}  { \PrintTranslation}           {translation}
    +{;} { \PrintReprint}               {reprint}
    +{.} { }                            {note}
    +{.} {}                             {transition}
    +{}  {\SentenceSpace \PrintReviews} {review}
}
\BibSpec{book}{%
    +{}  {\PrintPrimary}                {transition}
    +{,} { \TitleWithUrl}               {title}
    +{.} { }                            {part}
    +{:} { \textit}                     {subtitle}
    +{,} { \PrintEdition}               {edition}
    +{}  { \PrintEditorsB}              {editor}
    +{,} { \PrintTranslatorsC}          {translator}
    +{,} { \PrintContributions}         {contribution}
    +{,} { }                            {series}
    +{,} { \voltext}                    {volume}
    +{,} { }                            {publisher}
    +{,} { }                            {organization}
    +{,} { }                            {address}
    +{,} { \PrintDateB}                 {date}
    +{,} { }                            {status}
    +{}  { \parenthesize}               {language}
    +{}  { \PrintTranslation}           {translation}
    +{;} { \PrintReprint}               {reprint}
    +{.} { }                            {note}
    +{.} {}                             {transition}
    +{}  {\SentenceSpace \PrintReviews} {review}
}

\BibSpec{thesis}{%
    +{}  {\PrintAuthors}                {author}
    +{.} { \TitleWithUrl}               {title}
    +{:} { \textit}                     {subtitle}
    +{,} { \PrintThesisType}            {type}
    +{,} { }                            {organization}
    +{} { \PrintDate}                  {date}
    +{,} { }                            {address}
    +{,} { \eprint}                     {eprint}
    +{,} { }                            {status}
    +{}  { \parenthesize}               {language}
    +{}  { \PrintTranslation}           {translation}
    +{;} { \PrintReprint}               {reprint}
    +{.} { }                            {note}
    +{.} {}                             {transition}
    +{}  {\SentenceSpace \PrintReviews} {review}
}

\newtheorem{theorem}{Theorem}[section]
\newtheorem{lemma}[theorem]{Lemma}
\newtheorem{corollary}[theorem]{Corollary}
\newtheorem{proposition}[theorem]{Proposition}

\theoremstyle{definition}
\newtheorem{definition}[theorem]{Definition}

\theoremstyle{remark}
\newtheorem{remark}[theorem]{Remark}
\newtheorem{example}[theorem]{Example}

\numberwithin{equation}{section}

\newcommand{\cratio}{\mathrm{cr}}
\newcommand{\iden}{\mathrm{id}}
\newcommand{\domain}{I}
\definecolor{darkblue}{rgb}{0,0,0.7}
\newcommand{\ii}{\mathbbm{i}}
\newcommand{\jj}{\mathbbm{j}}
\newcommand{\kk}{\mathbbm{k}}

\newsavebox\mybox
\DeclareMathOperator{\Span}{span}
\let\Im\relax
\DeclareMathOperator{\Im}{Im}
\let\Re\relax
\DeclareMathOperator{\Re}{Re}
\DeclareMathOperator{\im}{im}

\title{Periodic discrete Darboux transforms}

\author{Joseph Cho}
\address[Joseph Cho]{Institute of Discrete Mathematics and Geometry, TU Wien, Wiedner Hauptstrasse 8-10/104, 1040 Wien, Austria}
\email{jcho@geometrie.tuwien.ac.at}

\author{Katrin Leschke}
\address[Katrin Leschke]{Department of Mathematics,
  University of Leicester, University Road, Leicester LE1 7RH, United
  Kingdom}
\email{k.leschke@leicester.ac.uk }

\author{Yuta Ogata}
\address[Yuta Ogata]{Department of Mathematics, Faculty of Science, Kyoto Sangyo University, 
  Motoyama, Kamigamo, Kita-ku, Kyoto-City, 603-8555, Japan}
\email{yogata@cc.kyoto-su.ac.jp}
%
%\date{}
%\makeatletter
%\@namedef{subjclassname@2020}{\textup{2020} Mathematics
%  Subject Classification}
%\makeatother
%\subjclass[2020]{Primary 53A70; Secondary 35Q53, 53A04}

\begin{document}
\begin{abstract}
  We express Darboux transformations of discrete polarised curves as parallel sections of discrete connections in the quaternionic formalism. This immediately leads to the linearisation of the monodromy of the transformation. We also consider the integrable reduction to the case of discrete bicycle correspondence.  Applying our method to the case of discrete circles, we obtain closed-form discrete parametrisations of all (closed) Darboux transforms and (closed) bicycle correspondences.
\end{abstract}

\maketitle

\section{Introduction}
In computational modeling, discretisation has been central to a number of applications in the form of a polygonal mesh: For example, computer graphics uses triangular meshes to represent $3$--dimensional models; freeform architecture greatly benefits from a systematic analysis of polygonal meshes (see, for example, \cite{pottmann_architectural_2007}).
The primary objective of polygonal meshes is to approximate a given
smooth surface via polygons.

In view of  its manifold applications in computational modelling, recently the field of discrete differential geometry, with the central ethos of \emph{integrable discretisation}, has experienced a surge in interest.
In contrast to classical numerical approaches for mesh generation,  \emph{integrable discretisation} in its nascence  \cite{levi_backlund_1980, quispel_linear_1984} saught to recover the integrable system structure of smooth solitonic theory in its discrete counterparts.
As surface theory became modernised via the solitonic approach, integrable discretisation began to take shape in the form of \emph{discrete surfaces}, with discrete pseudospherical surfaces \cite{bobenko_discrete_1996} and discrete isothermic surfaces \cite{bobenko_discrete_1996-1} being the seminal examples.
These discrete surfaces with integrability approximate the smooth surfaces capably; more importantly, integrable discretisation were quickly found to possess a rich mathematical structure rivaling that of the smooth counterpart, giving birth to the field of \emph{discrete differential geometry} \cite{bobenko_discrete_2008}.
With the growth of the field, discrete differential geometry no longer merely seeks to replicate the mathematical structure of the smooth theory; the field is now quickly becoming a key ingredient in understanding the smooth theory.
For example, a solution to the Björling problem for isothermic surfaces was obtained via discrete isothermic surfaces in \cite{bucking_constructing_2016}; remarkably, discrete differential geometry also was essential to the resolution of the long standing global Bonnet problem in \cite{bobenko_compact_2021}.

Much of the interest in discrete differential geometry was centered around the \emph{local theory}; on the contrary, the global theory of discrete surfaces from the viewpoint of integrable discretisations has received comparatively less interest.
In this work, we seek to focus on the global aspects of discrete differential geometry.

As a starting point, we will investigate periodic Darboux transforms of discrete polarised curves.
Darboux transformations of smooth polarised curves were defined in \cite{burstall_semi-discrete_2016} in the context of interpreting the semi-discrete isothermic surfaces \cite{muller_semi-discrete_2013} in terms of transformation theory.
In fact, it has been investigated that the integrable reductions of such Darboux transformations include the \emph{bicycle correspondences}, a mathematical model of the pair of tire tracks of a bicycle, with various connections to the Hashimoto or the smoke ring flow and the filament equations \cite{hasimoto_soliton_1972, tabachnikov_bicycle_2017, bor_tire_2020} and the modified Korteweg--de Vries equations \cite{cho_infinitesimal_2020}.

The integrable discretisation of the Darboux transformation was obtained in \cite{cho_discrete_2021-1} in the case of plane curves motivated by discrete isothermic surfaces \cite{bobenko_discrete_1996-1}; meanwhile, the monodromy of discrete bicycle correspondences was investigated in \cite{tabachnikov_discrete_2013} (see also \cite{matthaus_discrete_2003}) while that of the discrete Hashimoto flows was examined in \cite{pinkall_new_2007, hoffmann_discrete_2008}.
Building on these results, we will investigate the monodromy of discrete Darboux transformations via the gauge theoretic approach to integrability, where the zero curvature formalism is expressed via the existence of a $1$-parameter family of (flat) connections on the trivial bundle \cite{burstall_conformal_2010, burstall_isothermic_2011}, as the approach has been shown to be amenable to discretisations \cite{burstall_discrete_2014, burstall_discrete_2015, burstall_discrete_2018, burstall_discrete_2020, pember_discrete_2021}.

In Section~\ref{sect:two}, we reinterpret the Darboux transformations of smooth polarised curves of \cite{burstall_semi-discrete_2016} in the quaternionic setting to serve as a motivation for the discrete case.
The Darboux transformations of smooth polarised curves can be expressed via a Riccati-type equation \cite{hertrich-jeromin_remarks_1997, burstall_semi-discrete_2016}; via a suitably defined $1$-parameter family of (flat) connections, we will show in Theorem~\ref{thm:one} that Darboux tranformations can be characterised as the parallel sections of the connection, recovering a quaternionic analogue of the result in \cite{burstall_semi-discrete_2016}.
The interpretation of Darboux transformations via parallel sections is key to linearising the monodromy problem, a process that we explain in Section~\ref{subsect:twotwo}.
Finally, we finish the introductory section by considering the integrable reduction to the bicycle correspondences and the bicycle monodromy in Section~\ref{subsect:twothree}.
In fact, the quaternionic formalism to Darboux transformations allows us to obtain closed-form parametrisations for the transforms of a circle in Examples~\ref{exam:circle} and \ref{exam:smoothB} as the quaternionic approach yields second-order ordinary differential equations with constant coefficients from the linearisation of the Riccati-type equation (see Remark~\ref{rema:non-constant}).
As we will see, this approach will also allow us to obtain the \emph{closed-form discrete parametrisations}, a comparatively rare result in the discrete theory.

The next Section~\ref{sect:three} is devoted to the Darboux transformations of discrete polarised curves via characterised as parallel sections of discrete (flat) connections.
Motivated by the Darboux transformations of smooth curves, we define the discrete connections associated with discrete polarised curves in Definition~\ref{def:discFlat}, and show in Theorem~\ref{thm:discDar} that the parallel sections correspond to discrete Darboux transformations obtained via a discrete Riccati-type equation, coming from the well-known cross-ratios condition of discrete isothermic surfaces \cite{bobenko_discrete_1996-1}.
The discrete connections approach immediately yields the linearisation of the discrete monodromy problem (see Section~\ref{subsect:discMon}).
Then in Theorem~\ref{thm:discB}, we obtain the integrable reduction to the case of discrete bicycle correspondences.
In Examples~\ref{exam:disc} and \ref{exam:disc2}, we test the robustness of our discretisation by considering the case of discrete circles.
Surprisingly, our methods efficiently yield closed-form discrete parametrisations of: the Darboux transformations, the closed Darboux transformations, the bicycle correspondences, and the closed bicycle correspondences of the discrete circle (see Figure~\ref{fig:discB3}).
\begin{figure}
	\centering
	\begin{minipage}{0.25\textwidth}
		\includegraphics[width=\linewidth]{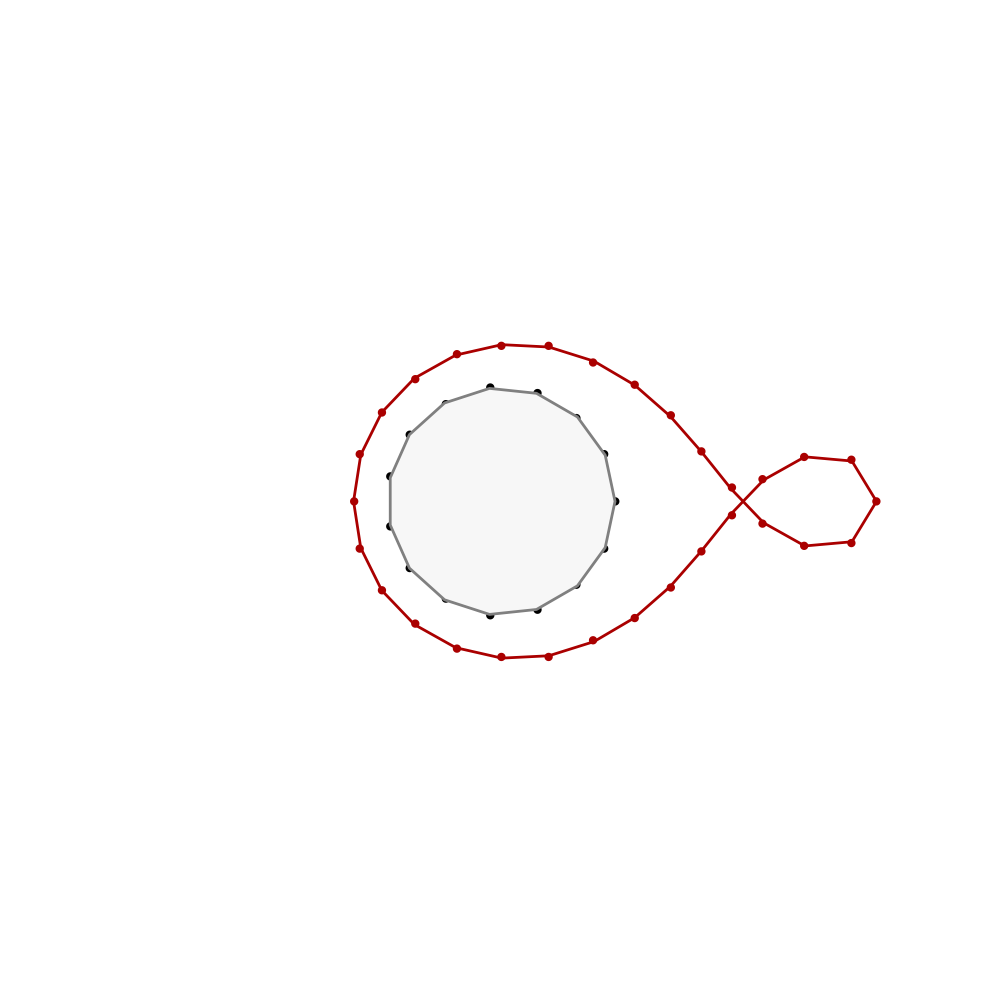}
	\end{minipage}
	\quad
	\begin{minipage}{0.25\textwidth}
		\includegraphics[width=\linewidth]{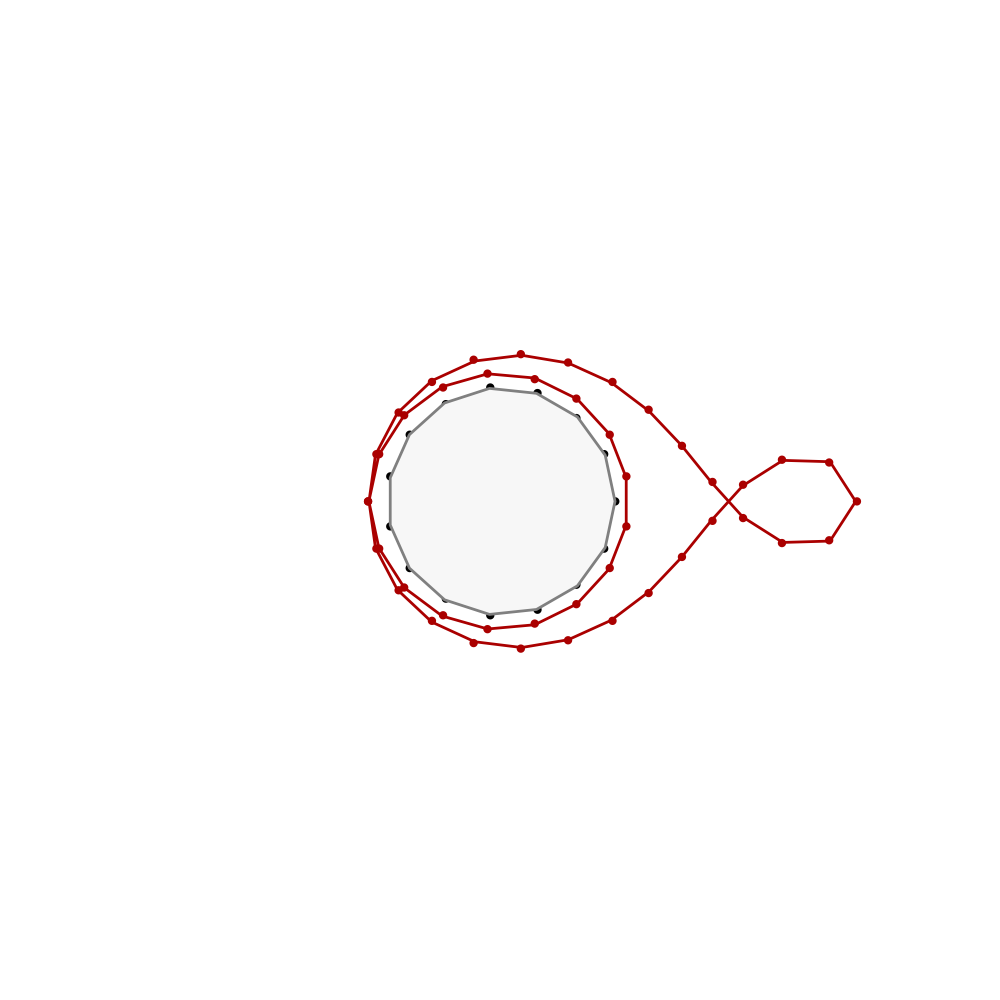}
	\end{minipage}
	\quad
	\begin{minipage}{0.25\textwidth}
		\includegraphics[width=\linewidth]{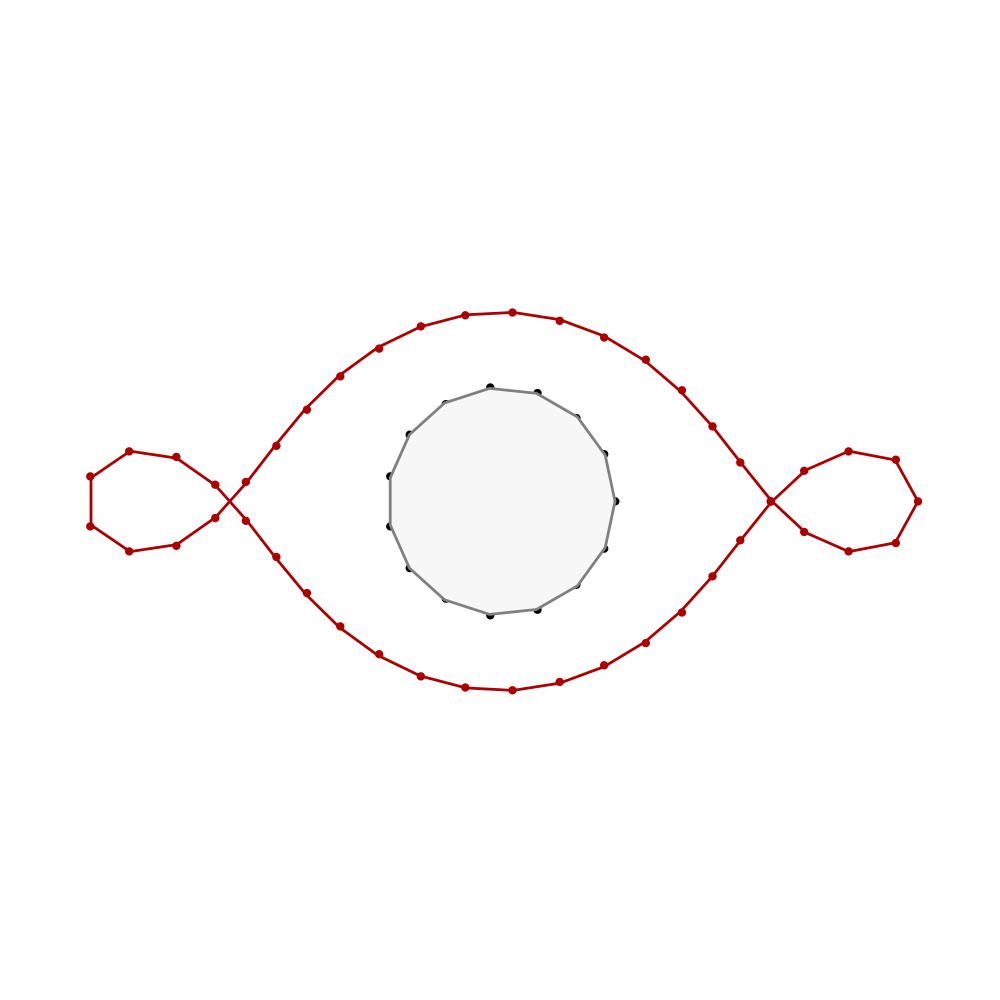}
	\end{minipage}
	\\
	\begin{minipage}{0.25\textwidth}
		\includegraphics[width=\linewidth]{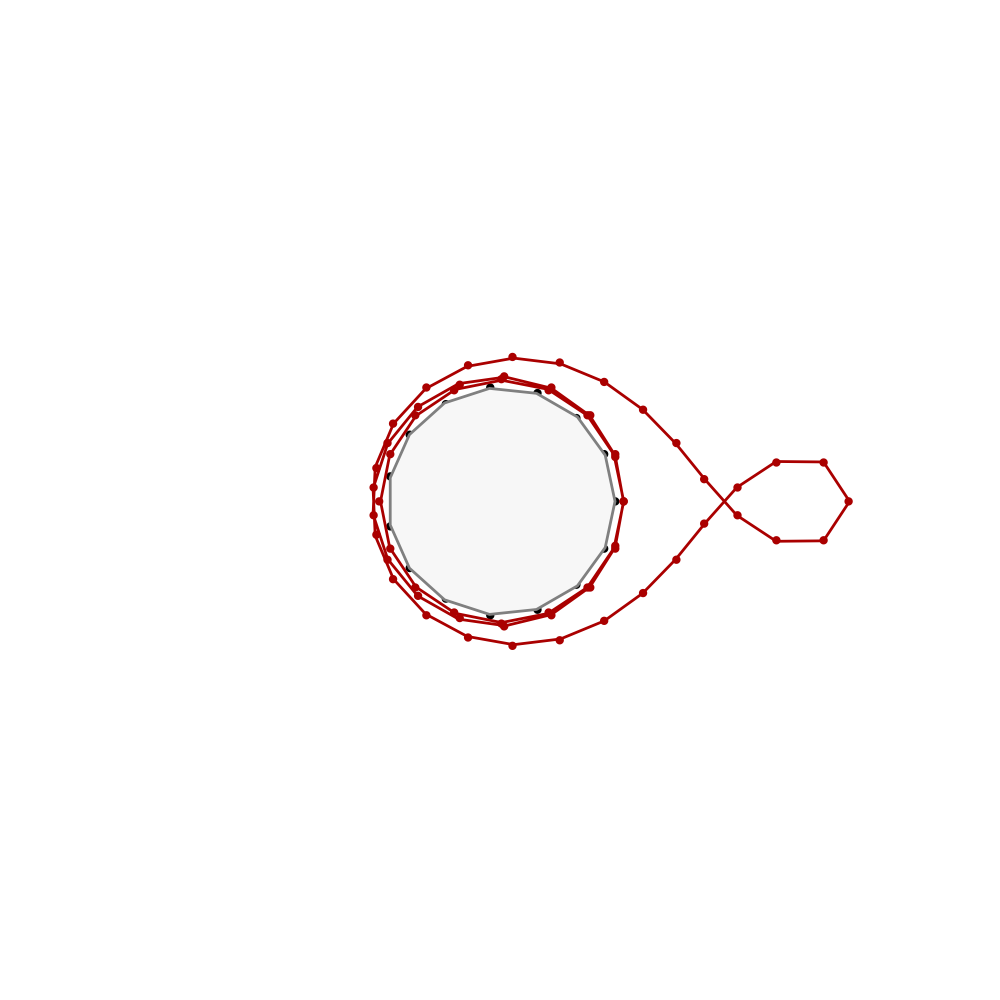}
	\end{minipage}
	\quad
	\begin{minipage}{0.25\textwidth}
		\includegraphics[width=\linewidth]{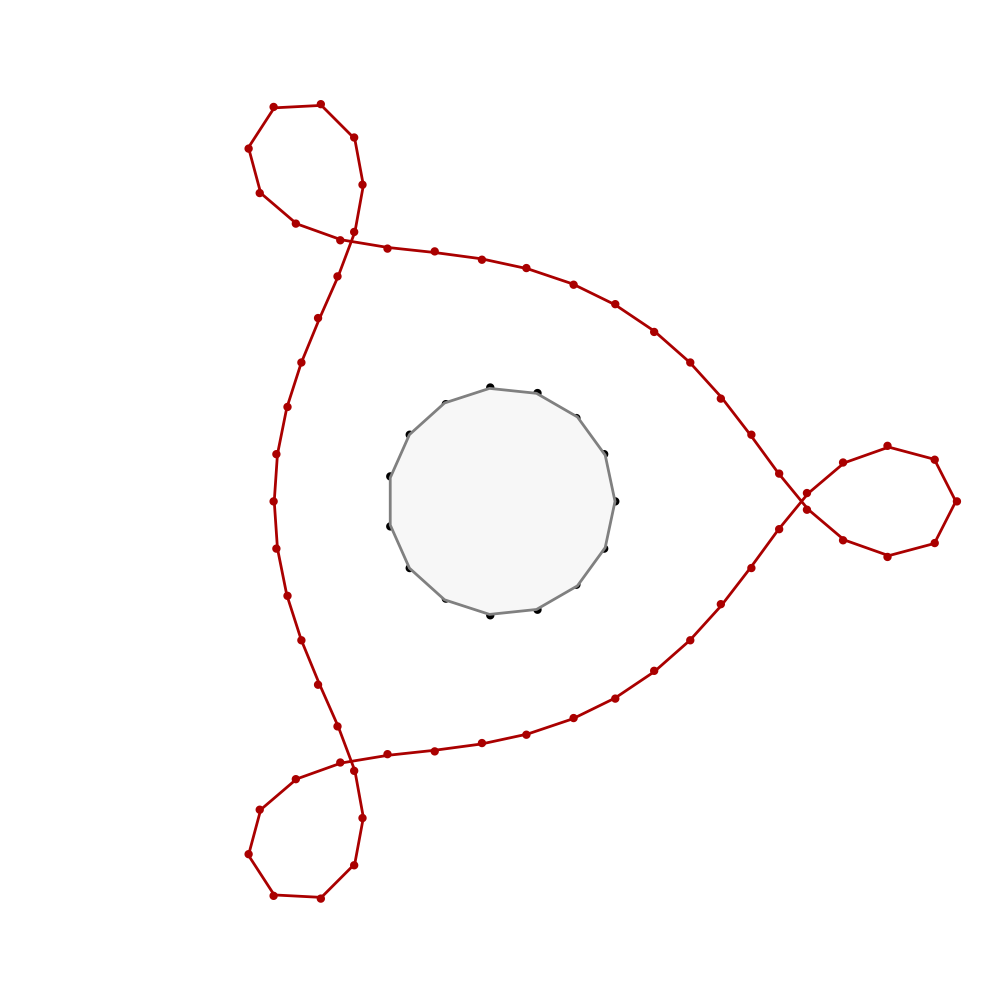}
	\end{minipage}
	\quad
	\begin{minipage}{0.25\textwidth}
		\includegraphics[width=\linewidth]{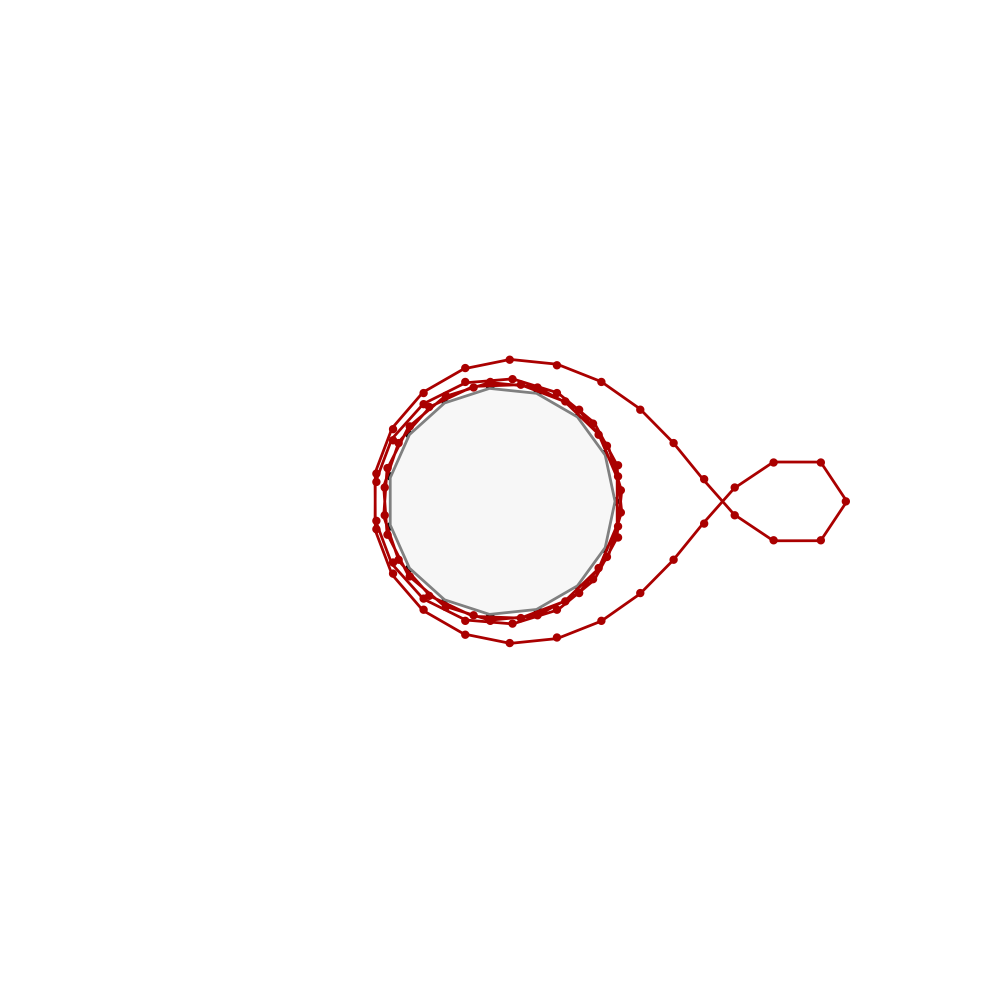}
	\end{minipage}
	\\
	\begin{minipage}{0.25\textwidth}
		\includegraphics[width=\linewidth]{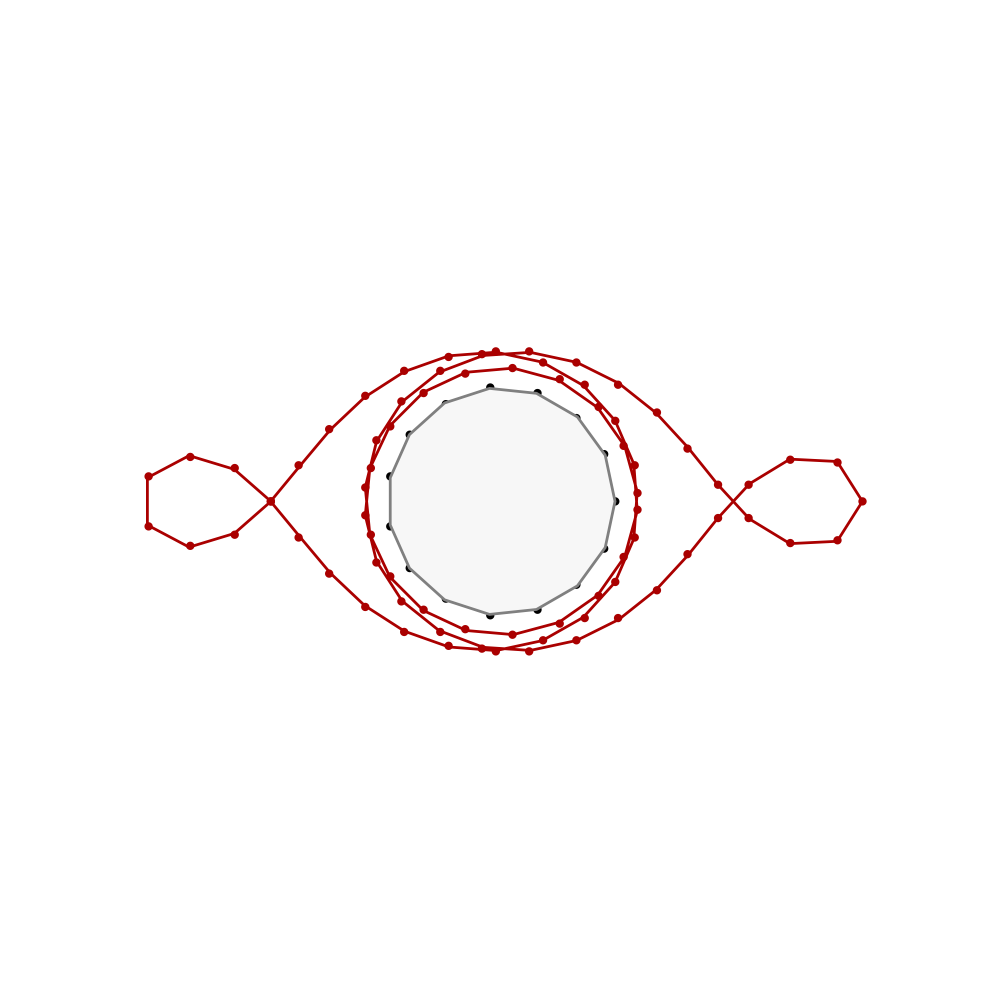}
	\end{minipage}
	\quad
	\begin{minipage}{0.25\textwidth}
		\includegraphics[width=\linewidth]{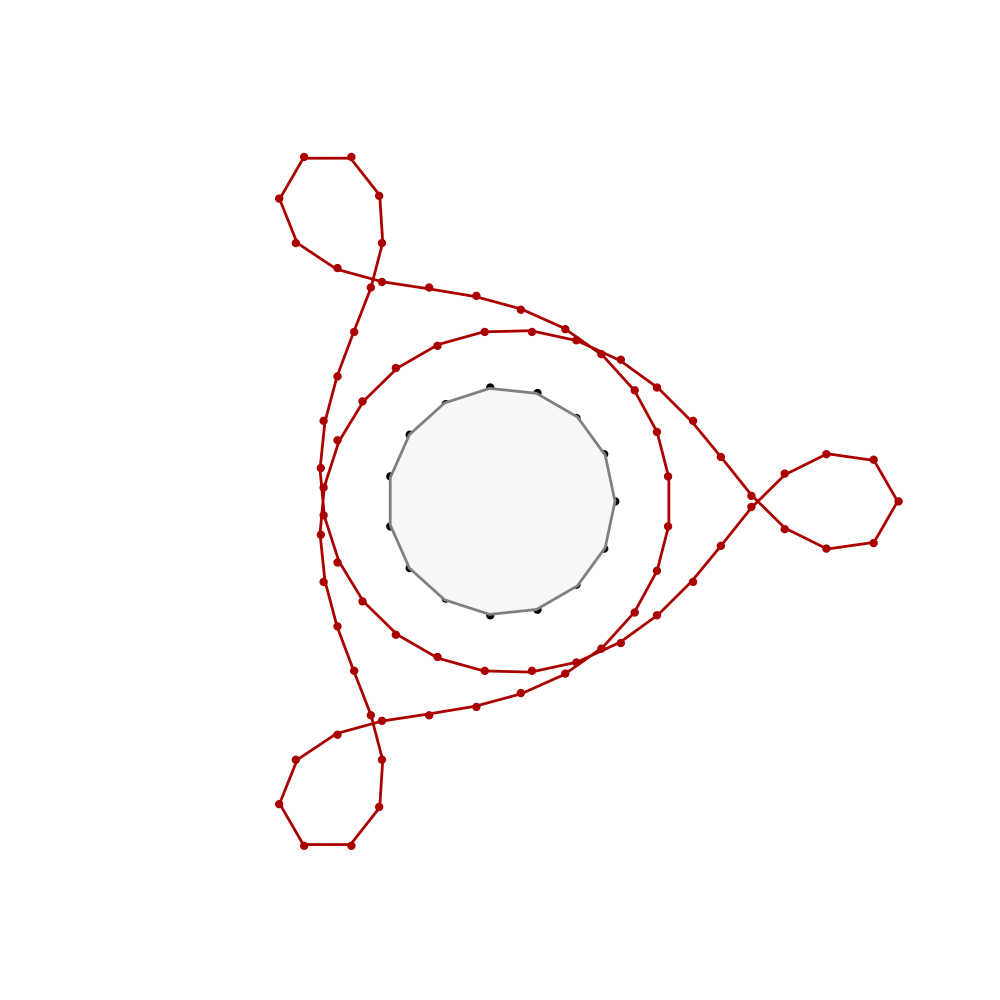}
	\end{minipage}
	\quad
	\begin{minipage}{0.25\textwidth}
		\includegraphics[width=\linewidth]{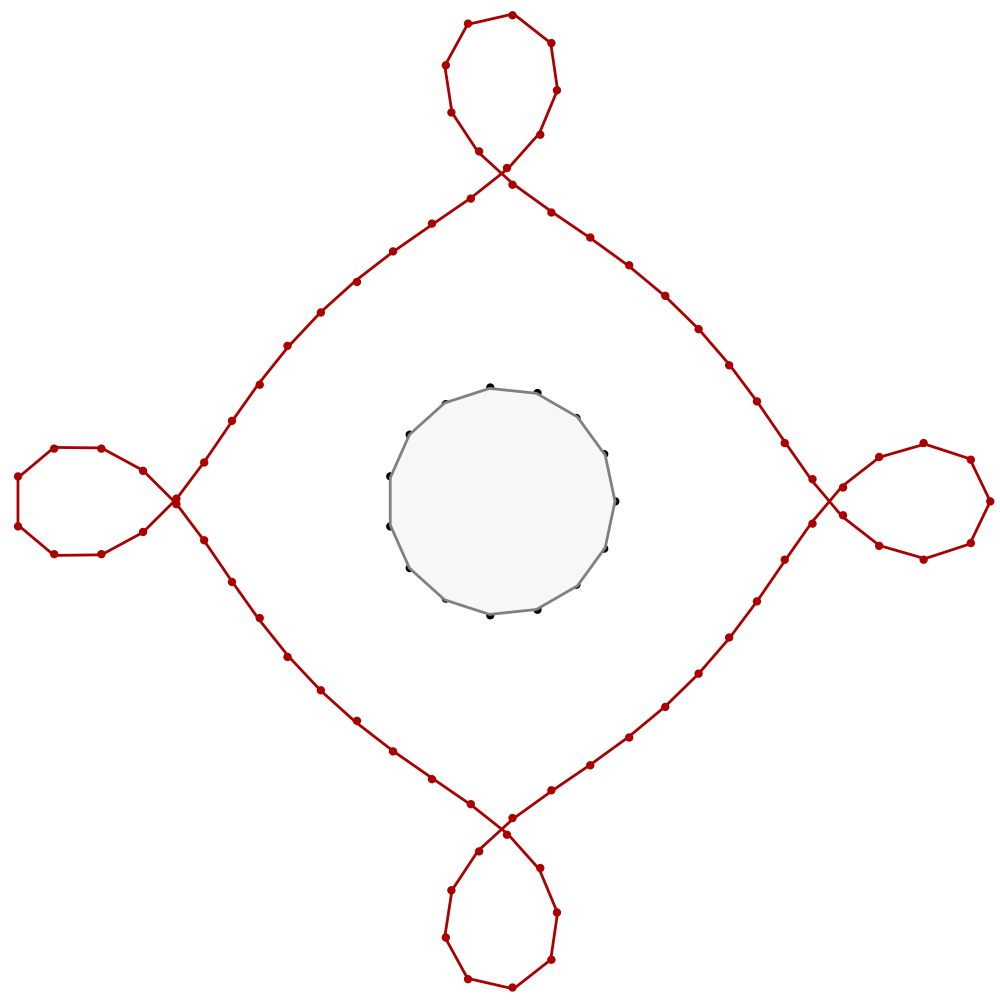}
	\end{minipage}
	\caption{Closed Darboux transformations (or bicycle correspondences) of a discrete circle.}
	\label{fig:discB3}
      \end{figure}
      
Integrable system structures are at the core of many problems in physics,
chemistry and biology; for example the Korteweg-de Vries equation models
waves on shallow water. Our results can be viewed as  prototypes on how to
obtain efficient numerical \emph{periodic} solutions in terms of
recurrence equations  by discretising the integrable
system structure.

\section{Darboux transformations of smooth space curves}\label{sect:two}
In this section, we adapt the Darboux transformations of smooth polarised curves in $\mathbb{R}^n$ from \cite{burstall_semi-discrete_2016} to the special case of $3$--space $\mathbb{R}^3$ and $4$--space $\mathbb{R}^4$ using a quaternionic formalism, with an eye on efficiently obtaining explicit parametrisations of transformations. (For details on the quaternionic setting, we refer the readers to works such as \cite{burstall_conformal_2002, hertrich-jeromin_introduction_2003}.)

Recall that the space of quaternions is given by 
	\[
		\mathbb{H} = \Span_\mathbb{R}\{1, \ii, \jj, \kk\}
	\]
where $\ii^2 = \jj^2 = \kk^2 = \ii\jj\kk =-1$ so that the multiplication is not commutative.
We identify the $4$--space with the quaternions, while we identify the $3$-space with the imaginary quaternions
	\[
		\Im \mathbb{H} = \Span_\mathbb{R}\{\ii, \jj, \kk\}.
	\]
Under the identification, we have
	\[
		\Re (a \overline{b}) = \langle a, b\rangle,
	\]
where the standard Euclidean inner product in $4$--space is denoted by $\langle \cdot, \cdot\rangle$, and the norm by $| \cdot |$. 
%For two imaginary quaternions $a, b \in \Im\mathbb{H}$ we have
%	\[
%		ab = -\langle a, b \rangle + a \times b
%	\]
%where $\langle, \rangle$ and $\times$ denote the inner product and the cross product in $3$--space, respectively.

\subsection{Darboux transformations via parallel sections}
Let $I \subset \mathbb{R}$ be a smooth interval, $q$ be a non-vanishing \emph{real} quadratic differential acting as polarisation on $I$.
We will refer to the pair $(I, q)$ as \emph{polarised domain} as in \citelist{\cite{musso_laguerre_1999}*{Definition 2.1} \cite{hertrich-jeromin_mobius_2001}*{p.\ 190}}.
Suppose now that a regular curve $x : (I,q) \to \mathbb{R}^4 \cong \mathbb{H}$ is defined on the polarised domain.
Then a curve $x^d : I \to  \mathbb{H}$ is called a \emph{dual curve} \cite[Definition 3.2]{burstall_semi-discrete_2016} if
	\[
		\dif{x} \dif{x}^d = q.
	\]
	
\begin{remark}\label{rema:parameter}
	When we need to fix a parameter $t$ of the domain $I$ to consider explicit examples, we will define a non-vanishing $m : I \to \mathbb{R}$ by
		\[
			q = \frac{1}{m} \dif{t}^2.
		\]
\end{remark}
%where $'$ denotes the differentiation with respect to $t$.
With the notion of duality, the \emph{Darboux transform} $\hat{x} : (I, q) \to \mathbb{H}$ of $x$ with spectral parameter $\mu$ is given by a Riccati equation in \cite[Equation (2.9)]{burstall_semi-discrete_2016}:
	\begin{equation}\label{eqn:smriccati}
		\dif{\hat{x}} = \mu (\hat{x} - x) \dif{x}^d (\hat{x} - x) = \mu T \dif{x}^d T
	\end{equation}
for a real constant $\mu$ and $T := \hat{x} - x$, and we call $x, \hat{x} : (I, q) \to \mathbb{H}$ a \emph{Darboux pair}.

\begin{remark}\label{rema:circleCongruence}
We note that as shown in \cite[Definition and Lemma 2.2]{burstall_semi-discrete_2016}, the Riccati equation \eqref{eqn:smriccati} is a reformulation of the \emph{tangential cross-ratios}, and implies that a Darboux pair is a Ribaucour pair, namely, they must envelop a common circle congruence.
\end{remark}

Darboux pairs of polarised curves are Möbius invariant notions; therefore, to view the transformation within the realm of conformal geometry, we consider
	\[
		\mathbb{H} \cup \{ \infty\} \cong \mathbb{H}\mathbb{P}^1 := \mathbb{P}(\mathbb{H}^2)
	\]
as the model for the conformal $4$-sphere, where we view $\mathbb{H}^2$ as a quaternionic right vector space.
In this paper, we take advantage of the Möbius invariance and take affine coordinates to associate the conformal $4$-sphere $\mathbb{H}\mathbb{P}^1$ with points in $\mathbb{R}^4 \cong \mathbb{H}$ via
	\[
		\mathbb{H} \ni x \sim L := \psi \mathbb{H} := \begin{pmatrix} x \\ 1 \end{pmatrix} \mathbb{H} \in \mathbb{H}\mathbb{P}^1.
	\]
Therefore, any polarised space curve is now represented as $L : (I, q) \to \mathbb{H}\mathbb{P}^1$, also considered as a $1$-dimensional subbundle of the trivial bundle $\underline{\mathbb{H}}^2 := I \times \mathbb{H}^2$.

Under this setting, we now aim to understand how the Darboux transformations of polarised curves can be interpreted in terms of parallel sections of flat connections defined on the trivial bundle $\underline{\mathbb{H}}^2$, recovering the quaternionic analogue of the result in \cite[Definition and Corollary 2.5]{burstall_semi-discrete_2016}.

To do this, consider a family of (flat) connections $\mathcal{D}_\lambda$ defined on the trivial bundle $\underline{\mathbb{H}}^2$ given by
	\begin{equation}\label{eqn:smoothcalD}
		\mathcal{D}_\lambda := \dif{} + \begin{pmatrix} 0 & \dif x \\ \lambda \dif x^d & 0 \end{pmatrix}, \quad \lambda \in \mathbb{R}.
	\end{equation}

\begin{remark}
	The family of connections $\mathcal{D}_\lambda$ is trivially flat as the domain is $1$-dimensional; however, the ground for the emphasis on the flatness is twofold: to mirror the integrable structure of isothermic surfaces in the polarised curve theory, and to note that the parallel sections are well-defined.
\end{remark}
Then we have that $\phi := \begin{pmatrix} \alpha \\ \beta \end{pmatrix}$ is a parallel section of $\mathcal{D}_\mu$ for some $\mu \in \mathbb{R}$, that is, $\mathcal{D}_\mu \phi = 0$,
if and only if
	\begin{equation}\label{eqn:dmupar}
		\dif{\begin{pmatrix} \alpha \\ \beta \end{pmatrix}} = - \begin{pmatrix} \dif x \, \beta\\ \mu \dif x^d \, \alpha \end{pmatrix}.
	\end{equation}
Under this setting, the $\mathcal{D}_\mu$--parallel sections can be characterised as follows:
\begin{lemma}[cf.\ {\cite[Corollary~5.4.4]{hertrich-jeromin_introduction_2003}}]
	Given a polarised curve $x :  (I, q) \to \mathbb{H}$, we have $\hat{x} := x + \alpha \beta^{-1}$ is a Darboux transform of $x$ with parameter $\mu$ if and only if $\phi := \begin{psmallmatrix} \alpha \\ \beta \end{psmallmatrix}$ is $\mathcal{D}_\mu$--parallel.
\end{lemma}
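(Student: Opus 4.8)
The plan is to read off both implications directly from the two quaternionic equations packaged in \eqref{eqn:dmupar} together with the Riccati equation \eqref{eqn:smriccati}; the computations are short, so the only points needing care are the non-commutative bookkeeping and, for the converse, the construction of a suitable $\beta$.

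For the ``if'' direction, I will assume $\phi = \begin{psmallmatrix}\alpha\\\beta\end{psmallmatrix}$ is $\mathcal{D}_\mu$--parallel, so that \eqref{eqn:dmupar} reads $\dif\alpha = -\dif x\,\beta$ and $\dif\beta = -\mu\,\dif x^d\,\alpha$; here $\beta$ is nowhere zero (as is already implicit in $\hat x = x + \alpha\beta^{-1}$), so $T := \alpha\beta^{-1}$ is well-defined. I would then differentiate $\hat x = x + \alpha\beta^{-1}$, using the quaternionic quotient rule $\dif(\beta^{-1}) = -\beta^{-1}(\dif\beta)\beta^{-1}$, to obtain
\[
  \dif\hat x = \dif x + (\dif\alpha)\beta^{-1} - \alpha\beta^{-1}(\dif\beta)\beta^{-1}.
\]
Substituting $\dif\alpha = -\dif x\,\beta$ cancels the first two terms, and substituting $\dif\beta = -\mu\,\dif x^d\,\alpha$ together with $T = \alpha\beta^{-1}$ turns the remaining term $-\alpha\beta^{-1}(\dif\beta)\beta^{-1}$ into $\mu\,T\,\dif x^d\,T$. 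This is precisely \eqref{eqn:smriccati}, so $\hat x$ is a Darboux transform of $x$ with parameter $\mu$.

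For the converse, I would start from a Darboux transform $\hat x$ of $x$ with parameter $\mu$, set $T := \hat x - x$, and record that \eqref{eqn:smriccati} gives $\dif T = \mu\,T\,\dif x^d\,T - \dif x$. The natural move is to \emph{build} the parallel section: let $\beta : I \to \mathbb{H}$ solve the linear ordinary differential equation $\dif\beta = -\mu\,\dif x^d\,T\,\beta$ with an arbitrary nonzero initial value, so that $\beta$ is nowhere zero (the solution operator of a linear equation being invertible), and set $\alpha := T\beta$. Then $\dif\beta = -\mu\,\dif x^d\,\alpha$ holds by construction, while
\[
  \dif\alpha = (\dif T)\beta + T\,\dif\beta = \bigl(\mu\,T\,\dif x^d\,T - \dif x\bigr)\beta - T\,\mu\,\dif x^d\,T\,\beta = -\dif x\,\beta,
\]
so $\phi := \begin{psmallmatrix}\alpha\\\beta\end{psmallmatrix}$ satisfies \eqref{eqn:dmupar}, i.e.\ it is $\mathcal{D}_\mu$--parallel, and $x + \alpha\beta^{-1} = x + T = \hat x$.

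I do not expect a serious obstacle: the content is a routine quaternionic computation. The one place to be attentive is the converse, where $\alpha$ is not available a priori, so $\beta$ must first be introduced as the solution of an ODE whose coefficients involve only the given data $x$ and $\hat x$ (equivalently $T$); after that, substituting the Riccati equation does everything. It is also worth noting that a $\mathcal{D}_\mu$--parallel section is determined only up to right multiplication by a constant quaternion, which is precisely the ambiguity removed by passing to $\alpha\beta^{-1}$, so the correspondence is really between Darboux transforms of $x$ and $\mathcal{D}_\mu$--parallel lines $\phi\mathbb{H}$ in $\underline{\mathbb{H}}^2$.
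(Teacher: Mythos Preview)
Your argument is correct and essentially identical to the paper's proof. The only cosmetic difference is in the converse: the paper defines $\beta$ via $(\dif{} + T^{-1}\dif\hat{x})\beta = 0$, whereas you impose $\dif\beta = -\mu\,\dif x^d\,T\,\beta$; since the Riccati equation gives $T^{-1}\dif\hat{x} = \mu\,\dif x^d\,T$, these are the same ODE.
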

\begin{proof}
	First, assuming that $\phi$ is $\mathcal{D}_\mu$--parallel, define $\hat{x} := x + \alpha\beta^{-1}$.
	Then it is straightforward to see via the differential equations on $\alpha$ and $\beta$ \eqref{eqn:dmupar} that
		\[
			\dif{\hat{x}} = \dif{x} + \dif \alpha\, \beta^{-1} - \alpha \beta^{-1} \dif \beta \,\beta^{-1} =  \mu (\hat{x} - x)\dif{x}^d  (\hat{x} - x),
		\]
	so that $\hat{x}$ solves the Riccati equation \eqref{eqn:smriccati}.

	On the other hand, let $\hat{x}$ be a solution to the Riccati equation \eqref{eqn:smriccati}.
	Set $T := \hat{x} - x$ and define $\beta$ so that $\beta$ solves
		\[
			(\dif{} + T^{-1} \dif \hat{x}) \beta = 0.
		\]
	Putting $\alpha := T \beta$, we have
		\[
			0 = \dif \beta + T^{-1} \dif{\hat{x}} \, \beta = \dif \beta + \mu \dif{x}^d \, \alpha,
		\]
	while
		\[
			0 = (\dif{\hat{x}} - \mu T \dif{x}^d \, T)\beta = \dif{\hat{x}} \,\beta - \mu T \dif{x}^d\, \alpha = \dif{x}\,\beta + \dif{T}\,\beta + T \dif{\beta} = \dif{x}\,\beta + \dif{\alpha}.
		\]
	Therefore, $\phi$ is $\mathcal{D}_\mu$--parallel.
\end{proof}

Now consider the gauge transformation
	\[
		\dif{}_\lambda := \mathcal{G} \bullet \mathcal{D}_\lambda,
	\]
where $\mathcal{G} = ( e \: \psi)$ for $e = \begin{psmallmatrix} 1 \\ 0 \end{psmallmatrix}$.
This gives a $1$-parameter family of (flat) connections
	\[
		\dif{}_\lambda = \dif{} + \lambda \eta,
	\]
with
	\[
		\eta := \begin{pmatrix} x \dif{x}^d & - x \dif{x}^d x\\
				\dif{x}^d & - \dif{x}^d x \end{pmatrix},
	\]
satisfying $\ker \eta =\im \eta = \psi \mathbb{H}$.
Then $\phi = \begin{psmallmatrix} \alpha \\ \beta \end{psmallmatrix}$ is $\mathcal{D}_\mu$--parallel if and only if $\varphi := \mathcal{G} \phi = e \alpha + \psi \beta = \hat{\psi}\beta$ is $\dif{}_\mu$--parallel where
	\[
		\hat{L} := \hat{\psi}\mathbb{H} = \begin{pmatrix} \hat{x} \\ 1 \end{pmatrix}\mathbb{H} = \begin{pmatrix} x + \alpha \beta^{-1} \\ 1 \end{pmatrix}\mathbb{H}.
	\]
Therefore, we conclude:
\begin{theorem}[cf.\ {\cite[Lemma~5.4.12]{hertrich-jeromin_introduction_2003}}]\label{thm:one}
	The Darboux transforms of a polarised curve $x : (I, q) \to \mathbb{H}$ with parameter $\mu$ are given by the  $\dif{}_\mu$--parallel sections.
\end{theorem}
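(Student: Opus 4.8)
The plan is to derive the theorem directly from the preceding Lemma, using that the gauge transformation $\mathcal{G} = (e\ \psi)$ sets up a bijection between $\mathcal{D}_\mu$--parallel sections and $\dif{}_\mu$--parallel sections. Since $\mathcal{G}$ has columns $e = \begin{psmallmatrix}1\\0\end{psmallmatrix}$ and $\psi = \begin{psmallmatrix}x\\1\end{psmallmatrix}$, it is invertible at every point of $I$, so $\phi \mapsto \varphi := \mathcal{G}\phi$ is an isomorphism of the space of sections of $\underline{\mathbb{H}}^2$ with itself; and because $\dif{}_\mu = \mathcal{G}\bullet\mathcal{D}_\mu = \mathcal{G}\circ\mathcal{D}_\mu\circ\mathcal{G}^{-1}$, we have $\dif{}_\mu\varphi = 0$ if and only if $\mathcal{D}_\mu(\mathcal{G}^{-1}\varphi) = 0$, i.e.\ if and only if $\phi = \mathcal{G}^{-1}\varphi$ is $\mathcal{D}_\mu$--parallel. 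So everything reduces to the Lemma.

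For completeness I would first carry out the gauge computation recorded above: expanding $\mathcal{G}\bullet\mathcal{D}_\lambda = \dif{} + \mathcal{G}A_\lambda\mathcal{G}^{-1} - \dif{\mathcal{G}}\,\mathcal{G}^{-1}$ where $A_\lambda = \begin{psmallmatrix}0 & \dif{x}\\ \lambda\dif{x}^d & 0\end{psmallmatrix}$, the $\lambda$--independent part $\begin{psmallmatrix}0 & \dif{x}\\ 0 & 0\end{psmallmatrix}$ is cancelled by $-\dif{\mathcal{G}}\,\mathcal{G}^{-1}$ and the $\lambda$--linear part becomes $\lambda\eta$ with $\eta$ as stated. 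Noting the factorisation $\eta = \psi\,\dif{x}^d\,\begin{pmatrix}1 & -x\end{pmatrix}$, one reads off $\eta\psi = \psi\,\dif{x}^d(x - x) = 0$ and $\eta v \in \psi\mathbb{H}$ for every $v$, giving $\psi\mathbb{H}\subseteq\ker\eta$ and $\im\eta\subseteq\psi\mathbb{H}$; both are equalities because $\dif{x}^d$ is nowhere vanishing, as $q = \dif{x}\,\dif{x}^d$ is.

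It then remains to match the two sides of the claimed correspondence. Writing $\phi = \begin{psmallmatrix}\alpha\\\beta\end{psmallmatrix}$ and computing
\[
	\varphi = \mathcal{G}\phi = e\alpha + \psi\beta = \begin{pmatrix}\alpha + x\beta\\ \beta\end{pmatrix} = \begin{pmatrix}x + \alpha\beta^{-1}\\ 1\end{pmatrix}\beta = \hat{\psi}\beta
\]
on the open dense set where $\beta$ is invertible, the line subbundle $\varphi\mathbb{H}$ equals $\hat{L} = \hat{\psi}\mathbb{H}$ with $\hat{x} := x + \alpha\beta^{-1}$; conversely every $\dif{}_\mu$--parallel section $\varphi$ yields, via $\phi = \mathcal{G}^{-1}\varphi =: \begin{psmallmatrix}\alpha\\\beta\end{psmallmatrix}$, exactly such data. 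Now the Lemma says $\hat{x} = x + \alpha\beta^{-1}$ is a Darboux transform of $x$ with parameter $\mu$ precisely when $\begin{psmallmatrix}\alpha\\\beta\end{psmallmatrix}$ is $\mathcal{D}_\mu$--parallel, which by the first paragraph is precisely when $\hat{\psi}\beta = \mathcal{G}\begin{psmallmatrix}\alpha\\\beta\end{psmallmatrix}$ is $\dif{}_\mu$--parallel; this is the assertion of the theorem.

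I do not expect a genuine obstacle here — the content is entirely in the Lemma, and the theorem is its transport along $\mathcal{G}$. The one point requiring a word of care is the behaviour at zeros of $\beta$: there the affine expression $x + \alpha\beta^{-1}$ diverges, while the $\dif{}_\mu$--parallel section $\varphi = e\alpha + \psi\beta$ and the line $\varphi\mathbb{H}$ it spans stay perfectly regular. So the precise statement is a bijection between $\dif{}_\mu$--parallel line subbundles of $\underline{\mathbb{H}}^2$ (equivalently, $\dif{}_\mu$--parallel sections modulo a constant quaternionic scale) and Darboux transforms viewed as curves into $\mathbb{H}\mathbb{P}^1$ — which is exactly what passing to the projective model was set up to capture.
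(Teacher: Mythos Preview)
Your proposal is correct and follows essentially the same route as the paper: the paper's argument for Theorem~\ref{thm:one} is exactly the discussion immediately preceding it, namely the gauge computation $\dif{}_\lambda = \mathcal{G}\bullet\mathcal{D}_\lambda$, the observation that $\phi$ is $\mathcal{D}_\mu$--parallel if and only if $\varphi = \mathcal{G}\phi = \hat{\psi}\beta$ is $\dif{}_\mu$--parallel, and an appeal to the Lemma. Your treatment is slightly more detailed (you spell out the cancellation in the gauge computation, verify $\ker\eta = \im\eta = \psi\mathbb{H}$ explicitly, and address the zeros of $\beta$ via the projective picture), but the substance is the same.
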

	
\begin{remark}
  The family of connections $\dif{}_\lambda$ defined on $\underline{\mathbb{H}}^2$ is the quaternionic analogue of the family of connections defined on $I \times \mathbb{R}^{n+1,1}$ introduced in \cite[Equation (2.10)]{burstall_semi-discrete_2016}.
\end{remark}

We can identify the sufficient condition for the Darboux transform $\hat{x}$ of a polarised curve in any $3$-sphere to take values again in the same $3$-sphere:
\begin{lemma}[cf.\ {\cite[Lemma~5.4.16]{hertrich-jeromin_introduction_2003}}]\label{lemm:r3condition}
	Given a polarised curve $x : (I, q) \to S^3$ in some $3$-sphere $S^3 \subset S^4$  with associated connection $\dif{}_\lambda$, let $\varphi = \hat{\psi} \beta$ be $\dif{}_\mu$--parallel.
	Then $\hat{x}: (I, q) \to S^3$ if and only if $\hat{x}(t_0) \in S^3$ for some $t_0 \in I$.
%		\[
%			 \Re\left(\alpha(t_0)(\alpha'(t_0))^{-1} x'(t_0)\right)=0
%		\]
%	for some $t_0 \in I$.
\end{lemma}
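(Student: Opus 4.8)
The plan is to describe the fixed $3$--sphere $S^3\subset S^4=\mathbb{H}\mathbb{P}^1$ as the null set of a constant quaternionic Hermitian form, and then to show that the polarisation hypothesis $x:(I,q)\to S^3$ forces the connection $\dif{}_\mu$ to preserve that form; a $\dif{}_\mu$--parallel section then has constant ``Hermitian square'', which is precisely the rigidity claimed. Recall that a $3$--sphere in $\mathbb{H}\mathbb{P}^1$ is of the form $S^3=\{\ell\mathbb{H}:\ell^*\mathcal{S}\ell=0\}$ for some constant, invertible, indefinite Hermitian matrix $\mathcal{S}\in\mathbb{H}^{2\times2}$ (here $(\cdot)^*$ denotes quaternionic conjugate transpose, and $\ell^*\mathcal{S}\ell\in\mathbb{R}$ because it equals its own conjugate transpose); for instance $\Im\mathbb{H}\cup\{\infty\}$ is the null set of $\begin{psmallmatrix}0&1\\1&0\end{psmallmatrix}$, and the general case follows by Möbius invariance. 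The ``only if'' direction of the lemma is immediate, so assume $\hat x(t_0)\in S^3$ for some $t_0\in I$.

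Put $\omega:=\begin{psmallmatrix}1&-x\end{psmallmatrix}$, so that $\psi=\begin{psmallmatrix}x\\1\end{psmallmatrix}$ satisfies $\omega\psi=0$, $\psi^*\omega^*=0$, $\omega\,\dif{\psi}=\dif{x}$, and $\eta=\psi\,\dif{x}^d\,\omega$ (a direct rewriting of the matrix defining $\eta$). The hypothesis $x:(I,q)\to S^3$ reads $\psi^*\mathcal{S}\psi\equiv0$, so $\mathcal{S}\psi\in\ker\psi^*=\omega^*\mathbb{H}$; thus there is a unique $s:I\to\mathbb{H}$ with $\mathcal{S}\psi=\omega^*s$, and then $\psi^*\mathcal{S}=(\mathcal{S}\psi)^*=\overline{s}\,\omega$. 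Differentiating $\psi^*\mathcal{S}\psi\equiv0$ and using $\dif{\psi}=\begin{psmallmatrix}\dif{x}\\0\end{psmallmatrix}$ gives $0=\overline{s}\,\dif{x}+\overline{\dif{x}}\,s=2\Re(\overline{s}\,\dif{x})$, hence also $\Re(s\,\overline{\dif{x}})=0$.

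The computational heart is the identity $\eta^*\mathcal{S}+\mathcal{S}\eta=0$. Using $\eta=\psi\,\dif{x}^d\,\omega$ together with $\mathcal{S}\psi=\omega^*s$ and $\psi^*\mathcal{S}=\overline{s}\,\omega$, one finds
\[
  \eta^*\mathcal{S}+\mathcal{S}\eta=\omega^*\bigl(\overline{s\,\dif{x}^d}+s\,\dif{x}^d\bigr)\omega=2\,\Re\!\bigl(s\,\dif{x}^d\bigr)\,\omega^*\omega .
\]
Since $\dif{x}\,\dif{x}^d=q$ is real and non-vanishing, on the $1$--manifold $I$ the form $\dif{x}^d$ is a nowhere-zero real multiple of $\overline{\dif{x}}$ (writing $\dif{x}=f\,\dif{t}$ and $\dif{x}^d=g\,\dif{t}$, one has $fg\in\mathbb{R}\setminus\{0\}$ and $g=(fg)\,\overline{f}/|f|^2$); together with $\Re(s\,\overline{\dif{x}})=0$ this yields $\Re(s\,\dif{x}^d)=0$, so indeed $\eta^*\mathcal{S}+\mathcal{S}\eta=0$.

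To finish: if $\varphi=\hat\psi\beta$ is $\dif{}_\mu$--parallel with $\mu\in\mathbb{R}$, then $\dif{\varphi}=-\mu\,\eta\varphi$, and using that $\mu$ is real,
\[
  \dif{(\varphi^*\mathcal{S}\varphi)}=(\dif{\varphi})^*\mathcal{S}\varphi+\varphi^*\mathcal{S}\,\dif{\varphi}=-\mu\,\varphi^*\bigl(\eta^*\mathcal{S}+\mathcal{S}\eta\bigr)\varphi=0,
\]
so $\varphi^*\mathcal{S}\varphi$ is a constant $c$ on $I$. A nonzero parallel section never vanishes and $\hat\psi=\begin{psmallmatrix}\hat x\\1\end{psmallmatrix}$ never vanishes, so $\beta$ is nowhere zero; since $\hat\psi^*\mathcal{S}\hat\psi\in\mathbb{R}$ we get $c=\varphi^*\mathcal{S}\varphi=\overline{\beta}\,(\hat\psi^*\mathcal{S}\hat\psi)\,\beta=|\beta|^2\,\hat\psi^*\mathcal{S}\hat\psi$. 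Hence $\hat x(t)\in S^3\iff\hat\psi(t)^*\mathcal{S}\hat\psi(t)=0\iff c=0$, and as the last condition is independent of $t$, $\hat x(t_0)\in S^3$ for one $t_0$ forces $\hat x:(I,q)\to S^3$. The main obstacle is the quaternionic bookkeeping in the skew-Hermiticity identity --- conjugation reversing products, the order of the factors in $\omega^*\,\overline{s\,\dif{x}^d}\,\omega$, and so on --- together with pinpointing where the hypotheses genuinely enter: the reality of $\mu$ (without which $-\mu$ would not pass through the Hermitian form, and $\varphi^*\mathcal{S}\varphi$ need not be constant) and the polarisation identity $\dif{x}\,\dif{x}^d=q\in\mathbb{R}$ (without which $\dif{x}^d$ need not be a real multiple of $\overline{\dif{x}}$, so the differentiated sphere constraint would not give $\Re(s\,\dif{x}^d)=0$). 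If one prefers not to cite the Hermitian-form picture of $3$--spheres, Möbius invariance of Darboux pairs permits first placing $S^3$ in standard position and running the same computation with $\mathcal{S}=\begin{psmallmatrix}0&1\\1&0\end{psmallmatrix}$.
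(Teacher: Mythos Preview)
Your proof is correct and follows essentially the same approach as the paper: both arguments show that the Hermitian form cutting out $S^3$ is conserved along $\dif{}_\mu$--parallel sections, so that $\hat\psi^*\mathcal{S}\hat\psi$ (equivalently $(\varphi,\varphi)$) is constant. The only packaging difference is that the paper first stereographically projects $S^3$ to $\Im\mathbb{H}$ and works with the explicit form $\mathcal{S}=\begin{psmallmatrix}0&1\\1&0\end{psmallmatrix}$, checking directly that $(\eta\varphi,\varphi)+(\varphi,\eta\varphi)=0$, whereas you keep $\mathcal{S}$ general and prove the slightly stronger operator identity $\eta^*\mathcal{S}+\mathcal{S}\eta=0$; your final remark already anticipates this equivalence.
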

\begin{proof}
	Since the necesscity is obvious, we show the sufficiency.
	Applying a suitable stereographic projection to the $S^3$, we will prove the statement for curves in $\Im \mathbb{H} \cong \mathbb{R}^3$.
	Now consider the hermitian form
		\begin{equation}\label{eqn:hermform}
			\left( \begin{pmatrix} a\\ b \end{pmatrix}, \begin{pmatrix} c\\ d \end{pmatrix}\right) = \bar a d + \bar b c,
		\end{equation}
	for $a, b, c, d \in \mathbb{H}$.
	Then we have that $(\phi\lambda, \tilde\phi\tilde\lambda) = \bar\lambda(\phi, \tilde\phi) \tilde \lambda$ for $\lambda, \tilde{\lambda} \in \mathbb{H}$ and $\phi, \tilde{\phi} \in \mathbb{H}^2$; furthermore, it is straightforward to check that $\left( \begin{psmallmatrix} \alpha\\ 1 \end{psmallmatrix}, \begin{psmallmatrix} \alpha\\ 1 \end{psmallmatrix}\right) = 0$ if and only if $\Re \alpha = 0$.

	Thus, if $x$ is as given, then we have $(\psi, \psi) = 0$ for $\psi = \begin{psmallmatrix} x \\ 1 \end{psmallmatrix}$.
	Now since $\varphi = e \alpha + \psi \beta$ is $\dif{}_\mu$--parallel, we have
		\[
			\dif{((\varphi, \varphi))} = -\mu \left( (\eta\varphi , \varphi) + (\varphi, \eta\varphi)\right) = 0,
		\] 
	where we used that $\eta\varphi = \eta e\alpha = \psi \dif{x}^d\,\alpha$.
	Thus, $(\varphi,\varphi)$ is constant.
	
	If at $t = t_0$, we have $\hat{x} = x + \alpha \beta^{-1}$ is pure imaginary, then $\alpha \beta^{-1}$ must also be pure imaginary for $t = t_0$, so that
		\[
			(\varphi,\varphi) = \bar\alpha\beta +\bar\beta \alpha =
			2\Re(\alpha\bar\beta) = 2\beta\bar\beta\Re(\alpha\beta^{-1}) =0.
		\]
	Hence, $(\varphi,\varphi) \equiv 0$ on $I$, and
		\[
			(\hat \psi, \hat\psi) = \bar\beta^{-1} (\varphi,\varphi)\beta^{-1} \equiv 0,
		\]
	giving us the desired conclusion.
\end{proof}

%\red{In fact, we then obtain the following corollary on curves in the $2$-sphere, or the plane:
%\begin{corollary}\label{cor:2sphere}
%	Let $x : (I, q) \to S^2$ be a curve into some $2$-sphere $S^2$. Then its Darboux transform $\hat{x}$ must take values in some $3$-sphere containing the $2$-sphere $S^2$. Furthermore, the Darboux transform $\hat{x}$ takes values in the same $2$-sphere if and only if $x(t_0) \in S^2$ for some $t_0 \in I$.
%\end{corollary}
%\begin{proof}
%	Viewing $S^2$ as the intersection of an elliptic sphere pencil of $3$-spheres (see \cite[Definition 1.2.3]{hertrich-jeromin_introduction_2003} for example), a choice of $\hat{x}(t_0)$ determines a $3$-sphere $S^3$ in the elliptic sphere pencil.
%	Since $x$ takes values in $S^2 \subset S^3$, the above Lemma~\ref{lemm:r3condition} implies that $\hat{x}$ will also take values in the $S^3$.
%	Furthermore, if $\hat{x}(t_0) \in S^2$, then $\hat{x}(t_0)$ takes value in every $3$-sphere of the elliptic sphere pencil.
%	Thus Lemma~\ref{lemm:r3condition} implies that $\hat{x}$ must be in the intersection of all $3$-spheres in the elliptic sphere pencil, the $2$-sphere $S^2$.
%\end{proof}}

In fact, a similar result holds for curves in any $2$-sphere:
\begin{corollary}\label{cor:2sphere}
	Let $x : (I, q) \to S^2$ be a curve into a $2$-sphere $S^2$. Then the Darboux transform $\hat{x}$ takes values in the same $2$-sphere if and only if $x(t_0) \in S^2$ for some $t_0 \in I$.
\end{corollary}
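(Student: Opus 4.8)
The plan is to write the ambient $2$-sphere as an intersection of two $3$-spheres and reduce to Lemma~\ref{lemm:r3condition}. First I would recall the elementary fact that any $2$-sphere $S^2$ in the conformal $4$-sphere $\mathbb{H}\mathbb{P}^1\cong S^4$ is the transverse intersection $S^2=S^3_1\cap S^3_2$ of two distinct $3$-spheres $S^3_1,S^3_2\subset S^4$: the $3$-spheres containing $S^2$ form a pencil, and any two distinct members of it meet precisely along $S^2$. (If one prefers, after a Möbius transformation --- which preserves Darboux pairs --- one may take $S^2=\{z\in\mathbb H:\Re z=0,\ |z|=1\}$, cut out by the $3$-sphere $\Im\mathbb H\cup\{\infty\}$ and the unit $3$-sphere $\{|z|=1\}$.) I would then note that the family of connections $\dif{}_\lambda$ attached to $x$ depends only on $x$ and the polarisation $q$, hence is the same whether $x$ is regarded as a curve into $S^3_1$ or into $S^3_2$.

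With this in hand, let $\varphi=\hat\psi\beta$ be the $\dif{}_\mu$--parallel section giving the Darboux transform $\hat x$. For the nontrivial direction, suppose $\hat x(t_0)\in S^2$ for some $t_0\in I$; then $\hat x(t_0)\in S^3_j$ for $j=1,2$, while $x$ takes values in $S^2\subset S^3_j$, so Lemma~\ref{lemm:r3condition} applied with ambient $3$-sphere $S^3_j$ gives $\hat x:(I,q)\to S^3_j$ for each $j$. Intersecting, $\hat x:(I,q)\to S^3_1\cap S^3_2=S^2$. The converse is trivial, so this finishes the argument.

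The only delicate point is the geometric bookkeeping in the first step. I would need to exhibit $S^2$ Möbius-invariantly as a transverse intersection $S^3_1\cap S^3_2$ (or normalise to the standard configuration above), and then check that Lemma~\ref{lemm:r3condition} really does apply to $S^3_1$ and to $S^3_2$ with one and the same parallel section $\varphi$. Its proof passes to a stereographic chart in which the chosen $3$-sphere becomes $\Im\mathbb H$ and uses the Hermitian form~\eqref{eqn:hermform}; invoking it twice just means using two such charts, and in each the hypothesis ``$\hat x$ lies in the $3$-sphere at the single point $t_0$'' holds because $\hat x(t_0)\in S^2$ lies in both $3$-spheres. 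Beyond these two applications of the lemma no further computation is needed; I expect this to be the main (if mild) obstacle.
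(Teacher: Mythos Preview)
Your proposal is correct and follows essentially the same route as the paper: view $S^2$ as an intersection of $3$-spheres through it and apply Lemma~\ref{lemm:r3condition} to each. The only cosmetic difference is that the paper invokes the entire elliptic sphere pencil of $3$-spheres containing $S^2$, whereas you (equivalently, and slightly more economically) pick just two members $S^3_1, S^3_2$ whose intersection already cuts out $S^2$.
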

\begin{proof}
	Viewing the given $2$-sphere $S^2$ as the intersection of $1$-parameter family of $3$-spheres, also called the elliptic sphere pencil of $3$-spheres (see \cite[Definition 1.2.3]{hertrich-jeromin_introduction_2003} for example), if $\hat{x}(t_0) \in S^2$, then $\hat{x}(t_0)$ takes value in every $3$-sphere of the elliptic sphere pencil.
	Thus Lemma~\ref{lemm:r3condition} implies that $\hat{x}$ must be in the intersection of all $3$-spheres in the elliptic sphere pencil, the starting $2$-sphere $S^2$.
\end{proof}

%Summarizing, we have:
%\begin{proposition}
%	The Darboux transformations of a smooth polarized curve $x : (I, \frac{\dif{t}^2}{m}) \to \mathbb{R}^3$ with parameter $\mu$ are given by the $\dif{}_\mu$--parallel sections (with suitable initial condition).
%\end{proposition}

Given a polarised curve $x$ with its associated connection $\dif{}_\lambda$, the next proposition shows that one can gauge $\dif{}_\lambda$ to obtain the $1$-parameter family of  connections associated with the Darboux transform $\hat{x}$ of $x$.
\begin{proposition}\label{prop:gauge}
	Let $L, \hat{L} : (I, \frac{\dif{t}^2}{m}) \to \mathbb{HP}^1$ be a Darboux pair with spectral parameter $\mu$, with respective associated connections $\dif{}_\lambda$ and $\hat{\dif{}}_\lambda$.
	For the splitting $\underline{\mathbb{H}}^2 = L \oplus \hat{L}$, denote by $\pi$ and $\hat{\pi}$ the projections onto $L$ and $\hat{L}$ respectively.
	Defining
		\[
			r_\lambda := \pi + \frac{\mu - \lambda}{\mu} \hat{\pi},
		\]
	one has that
		\[
			\hat{\dif{}}_\lambda = r_\lambda \bullet \dif{}_\lambda.
		\]
\end{proposition}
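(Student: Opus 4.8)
The plan is to verify the gauge identity $\hat{\dif{}}_\lambda = r_\lambda\bullet\dif{}_\lambda$ by direct computation, using that the gauge action reads $r_\lambda\bullet(\dif{}+\omega) = \dif{} + r_\lambda\,\omega\,r_\lambda^{-1} - (\dif{r_\lambda})\,r_\lambda^{-1}$. Here $\dif{}_\lambda = \dif{}+\lambda\eta$ and $\hat{\dif{}}_\lambda = \dif{}+\lambda\hat\eta$, where $\hat\eta = \hat\psi\,\dif{\hat x}^d(1,-\hat x)$ is built from $\hat x$ and the dual curve of $\hat x$ compatible with the common polarisation $q$, exactly as $\eta = \psi\,\dif{x}^d(1,-x)$ is built from $x$; thus the claim is equivalent to
\[
	\lambda\,r_\lambda\,\eta\,r_\lambda^{-1} - (\dif{r_\lambda})\,r_\lambda^{-1} = \lambda\hat\eta .
\]
Since $\pi+\hat\pi = \iden$ one first rewrites $r_\lambda = \iden - \tfrac{\lambda}{\mu}\hat\pi$, and, using $\pi^2=\pi$, $\hat\pi^2=\hat\pi$, $\pi\hat\pi=\hat\pi\pi=0$, checks that $r_\lambda^{-1} = \pi + \tfrac{\mu}{\mu-\lambda}\hat\pi$ (for $\lambda\neq\mu$, so that $r_\lambda$ is invertible). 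The computation then splits into an algebraic part (the conjugation $r_\lambda\eta r_\lambda^{-1}$) and a differential part ($(\dif{r_\lambda})r_\lambda^{-1}$).

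For the algebraic part I would use the property $\ker\eta = \im\eta = L$ recorded before Theorem~\ref{thm:one}, together with $\im\pi = \ker\hat\pi = L$ and $\im\hat\pi = \ker\pi = \hat L$. These give $\hat\pi\eta = 0$ and $\eta\pi = 0$ at once, hence $r_\lambda\eta = \eta$ and $\eta\,r_\lambda^{-1} = \tfrac{\mu}{\mu-\lambda}\,\eta\hat\pi$; moreover $\eta\hat\pi = \eta$, since $\hat\pi$ is the identity on $\hat L$ while $\eta$ annihilates $L = \ker\hat\pi$. Therefore $\lambda\,r_\lambda\,\eta\,r_\lambda^{-1} = \tfrac{\mu\lambda}{\mu-\lambda}\,\eta$. (This step uses only $\underline{\mathbb{H}}^2 = L\oplus\hat L$, not the Darboux condition.)

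For the differential part, write $\hat\pi = \hat\psi\,T^{-1}(1,-x)$ and $\pi = \psi\,T^{-1}(-1,\hat x)$ with $T = \hat x - x$, noting $(1,-x)\psi = 0$ and $(1,-x)\hat\psi = T$. Expanding
\[
	\dif{\hat\pi} = (\dif{\hat\psi})\,T^{-1}(1,-x) + \hat\psi\,\dif{(T^{-1})}\,(1,-x) + \hat\psi\,T^{-1}(0,-\dif{x}), \qquad \dif{(T^{-1})} = -T^{-1}(\dif{T})T^{-1},
\]
and evaluating on $\psi$ and $\hat\psi$ from the right, the derivative terms combine to $(\dif{\hat\pi})\psi = -\hat\psi\,T^{-1}\dif{x}$ and $(\dif{\hat\pi})\hat\psi = -\psi\,T^{-1}\dif{\hat x}$. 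It is here that the Riccati equation \eqref{eqn:smriccati} enters: from $\dif{\hat x} = \mu\,T\,\dif{x}^d\,T$ one reads off $\dif{x}^d = \mu^{-1}T^{-1}(\dif{\hat x})T^{-1}$, while $\dif{\hat x}\,\dif{\hat x}^d = q = \dif{x}\,\dif{x}^d$ forces $\dif{\hat x}^d = \mu^{-1}T^{-1}(\dif{x})T^{-1}$. Composing $(\dif{\hat\pi})\hat\psi$ with $T^{-1}(1,-x)$ and $(\dif{\hat\pi})\psi$ with $T^{-1}(-1,\hat x)$, and using $\eta = \mu^{-1}\psi\,T^{-1}(\dif{\hat x})T^{-1}(1,-x)$ and $\hat\eta = \mu^{-1}\hat\psi\,T^{-1}(\dif{x})T^{-1}(1,-\hat x)$, one obtains
\[
	(\dif{\hat\pi})\hat\pi = -\mu\,\eta, \qquad (\dif{\hat\pi})\pi = \mu\,\hat\eta .
\]
Hence $(\dif{r_\lambda})r_\lambda^{-1} = -\tfrac{\lambda}{\mu}(\dif{\hat\pi})\bigl(\pi + \tfrac{\mu}{\mu-\lambda}\hat\pi\bigr) = -\lambda\hat\eta + \tfrac{\mu\lambda}{\mu-\lambda}\eta$. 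Subtracting this from the conjugation term, the two $\tfrac{\mu\lambda}{\mu-\lambda}\eta$ contributions cancel and one is left with $r_\lambda\bullet\dif{}_\lambda = \dif{} + \lambda\hat\eta = \hat{\dif{}}_\lambda$.

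The main obstacle is the differential part: one has to carry out the expansion of $\dif{\hat\pi}$ carefully and recognise that every $\dif{x}$, $\dif{\hat x}$, $\dif{T}$ contribution collapses into a clean multiple of $\eta$ or $\hat\eta$ — and it is precisely there that the Riccati equation is used, through the identification of the $q$-compatible dual curve $\dif{\hat x}^d$ that defines the associated connection $\hat{\dif{}}_\lambda$ of the transform $\hat L$. Everything else is bookkeeping with the projections $\pi$ and $\hat\pi$.
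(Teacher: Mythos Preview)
Your proof is correct, but it takes a substantially different route from the paper's.

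The paper argues by evaluating both connections on a frame adapted to the splitting $\underline{\mathbb{H}}^2 = L\oplus\hat L$: it picks $\varphi\in\Gamma(\hat L)$ that is $\dif{}_\mu$--parallel (this exists by Theorem~\ref{thm:one}) and, using the symmetry of the Darboux relation, $\hat\varphi\in\Gamma(L)$ that is $\hat{\dif{}}_\mu$--parallel. Then it simply checks $(r_\lambda\bullet\dif{}_\lambda)\varphi = \hat{\dif{}}_\lambda\varphi$ and $(r_\lambda\bullet\dif{}_\lambda)\hat\varphi = \hat{\dif{}}_\lambda\hat\varphi$ using only $\dif\varphi = -\mu\eta\varphi$, $\hat\eta\varphi=0$, $\dif\hat\varphi = -\mu\hat\eta\hat\varphi$, $\eta\hat\varphi=0$. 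No explicit formula for $\hat\pi$, no differentiation of projectors, and the Riccati equation never appears explicitly---it is hidden in the existence of the two parallel sections.

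Your approach instead expands the gauge formula $r_\lambda\bullet\dif{}_\lambda = \dif{} + r_\lambda(\lambda\eta)r_\lambda^{-1} - (\dif r_\lambda)r_\lambda^{-1}$ directly. This is more computational, but it has the merit of being entirely self-contained: you never invoke the symmetry of the Darboux pair, and the Riccati equation enters transparently through the identities $\dif{x}^d = \mu^{-1}T^{-1}(\dif{\hat x})T^{-1}$ and $\dif{\hat x}^d = \mu^{-1}T^{-1}(\dif{x})T^{-1}$, which you derive along the way. The paper's proof is shorter and more conceptual; yours makes explicit exactly what structural input is needed at each step and, in particular, exhibits the dual curve $\dif{\hat x}^d$ of the transform in closed form.
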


\begin{proof}
	 As $L$ and $\hat{L}$ are Darboux pair with spectral parameter $\mu$, we have $\dif{}_\mu$--parallel $\varphi$ and $\hat{\dif{}}_\mu$--parallel $\hat{\varphi}$ such that $L = \hat{\varphi} \mathbb{H}$ while $\hat{L} = \varphi \mathbb{H}$.
	Thus, we only need to show that $\hat{\dif{}}_\lambda$ and $r_\lambda \bullet \dif{}_\lambda$ agree on $\varphi$ and $\hat{\varphi}$.
	Using the fact that $\dif{}_\mu \varphi = 0$ and $\hat{\eta}\varphi = 0$, we have that
		\[
			(r_\lambda \bullet \dif{}_\lambda) \varphi
				= r_\lambda (\dif{\varphi} + \eta \varphi \lambda)  \frac{\mu}{\mu - \lambda} 
				= -\eta \varphi \mu = \dif{\varphi} = \hat{\dif{}}_\lambda \varphi
		\]
	while $\hat{\dif{}}_\mu \hat{\varphi} = 0$ and $\eta\hat{\varphi} = 0$ implies that
		\[
			(r_\lambda \bullet \dif{}_\lambda) \hat{\varphi}
				= r_\lambda (\dif{\hat{\varphi}})
				= r_\lambda (-\mu\hat{\eta}\hat{\varphi})
				= \hat{\eta} \hat{\varphi} (\lambda - \mu)
				= \hat{\dif{}}_\lambda \hat{\varphi},
		\]
	giving us the desired conclusion.
\end{proof}

\begin{remark}
	The gauge $r_\lambda^{-1}$ has a simple pole at $\lambda = \mu$; however, the above Proposition~\ref{prop:gauge} shows that $\hat{\dif{}}_\lambda$ is real--analytic across $\lambda = \mu$.
\end{remark}

\subsection{Monodromy of Darboux transforms}\label{subsect:twotwo}
The current setup of treating Darboux transforms as parallel sections of a connection allows us to reduce the problem of monodromy to finding \emph{sections with multipliers}:
Assuming that a polarised space curve $x$ with associated connection $\dif{}_\lambda$ has period $M$, then its Darboux transform $\hat{x}$ with spectral parameter $\mu$ has period $M$ if and only if $\dif{}_\mu$--parallel $\varphi$ is a \emph{section with multiplier}, that is, $\varphi(t + M) = \varphi(t) h$ where $h \in \mathbb{H}_*$.
Along with the fact that parallel sections of $\dif{}_\mu$ for some fixed spectral parameter $\mu$ form a vector space, one can calculate the monodromy of Darboux transforms (see Figures~\ref{fig:DTsmooth1} and \ref{fig:DTsmooth2}).
We will illustrate this explicitly with the next example of a circle.

\begin{figure}
	\centering
	\begin{minipage}{0.45\textwidth}
		\includegraphics[width=\linewidth]{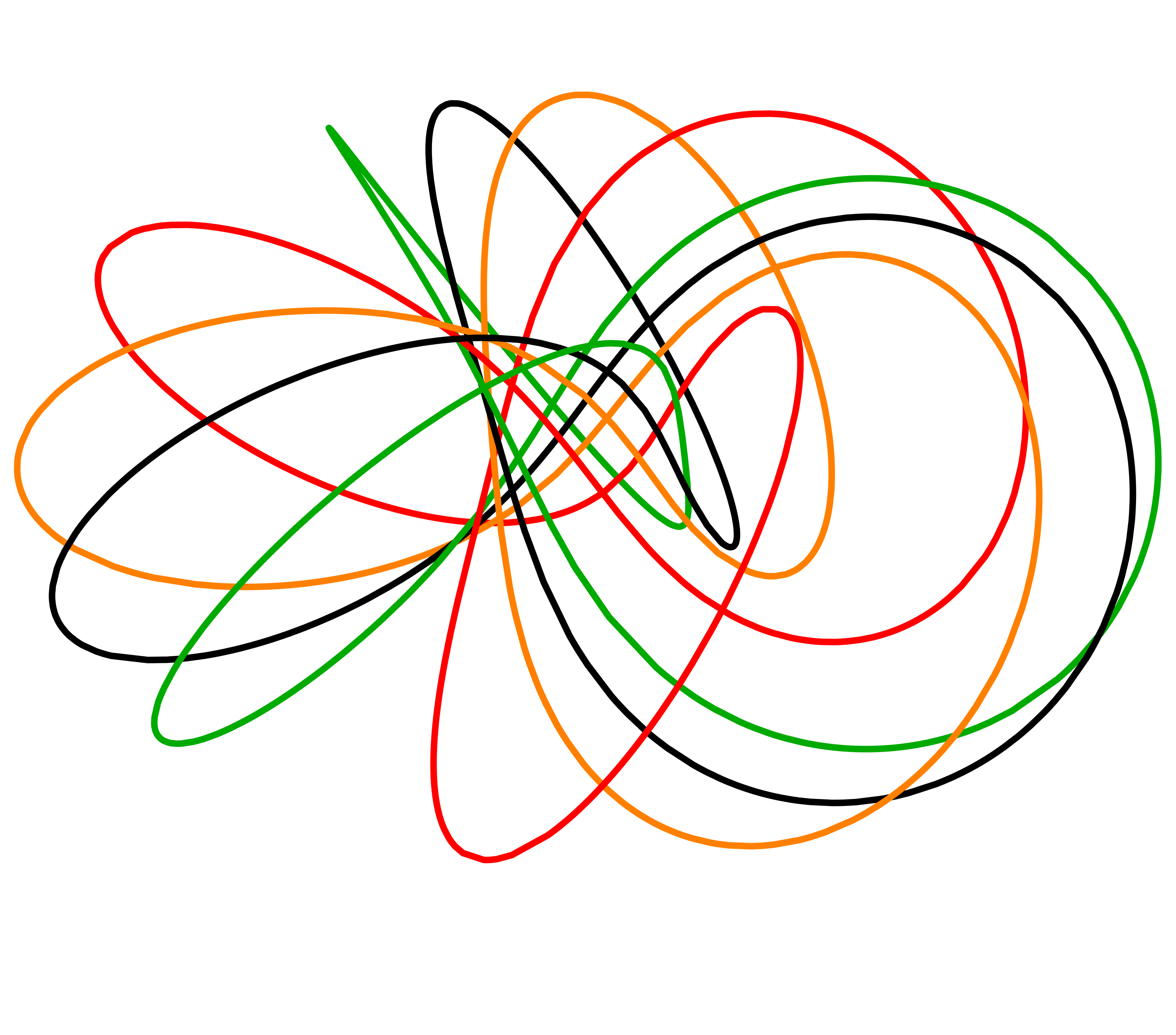}
	\end{minipage}
	\begin{minipage}{0.45\textwidth}
		\includegraphics[width=\linewidth]{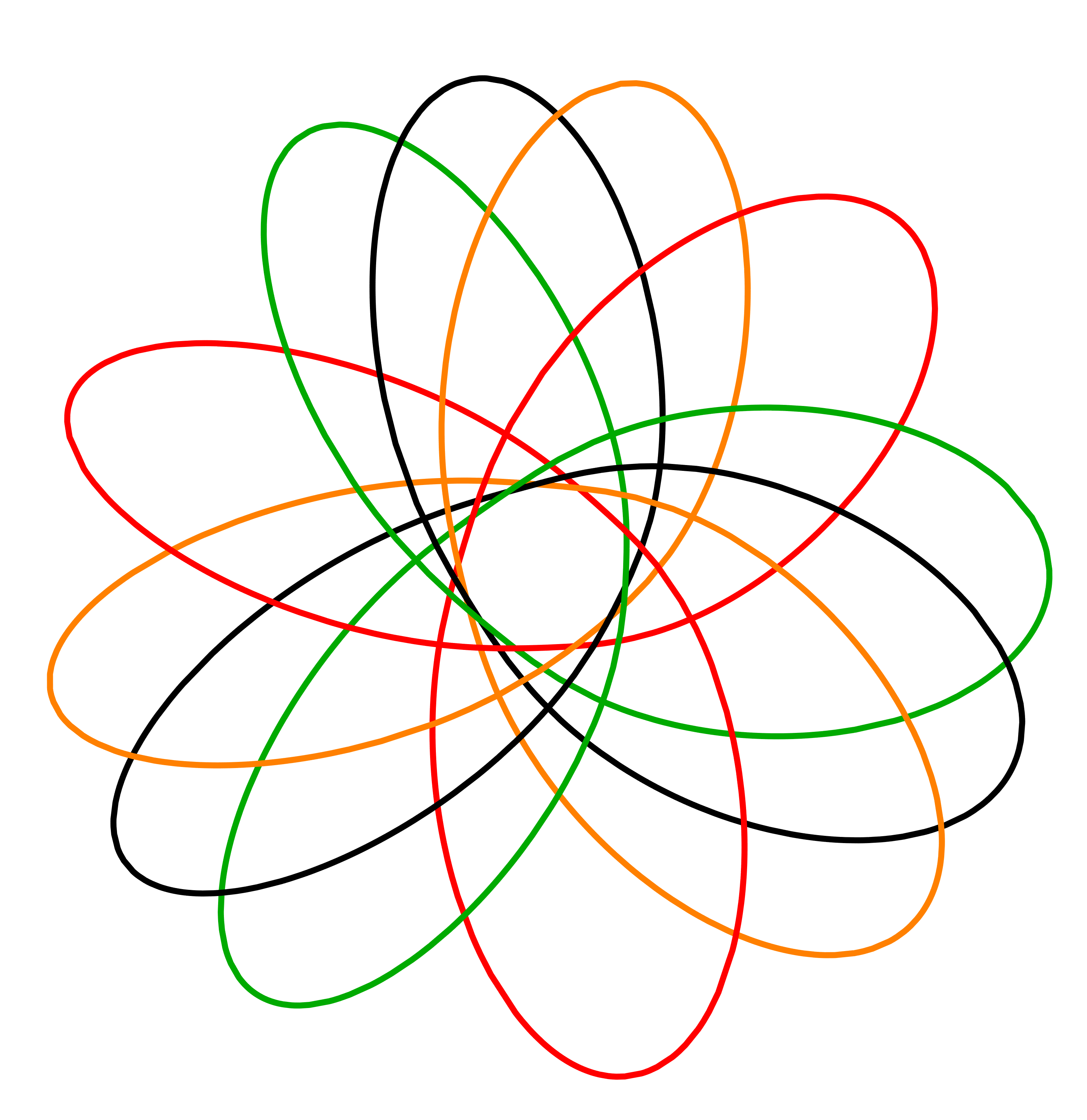}
	\end{minipage}
	\caption{Original $(2,3)$-torus knot (in black), and its closed Darboux transforms (on the left), also viewed from the top (on the right).}
	\label{fig:DTsmooth1}
\end{figure}

\begin{figure}
	\centering
	\begin{minipage}{0.45\textwidth}
		\includegraphics[width=\linewidth]{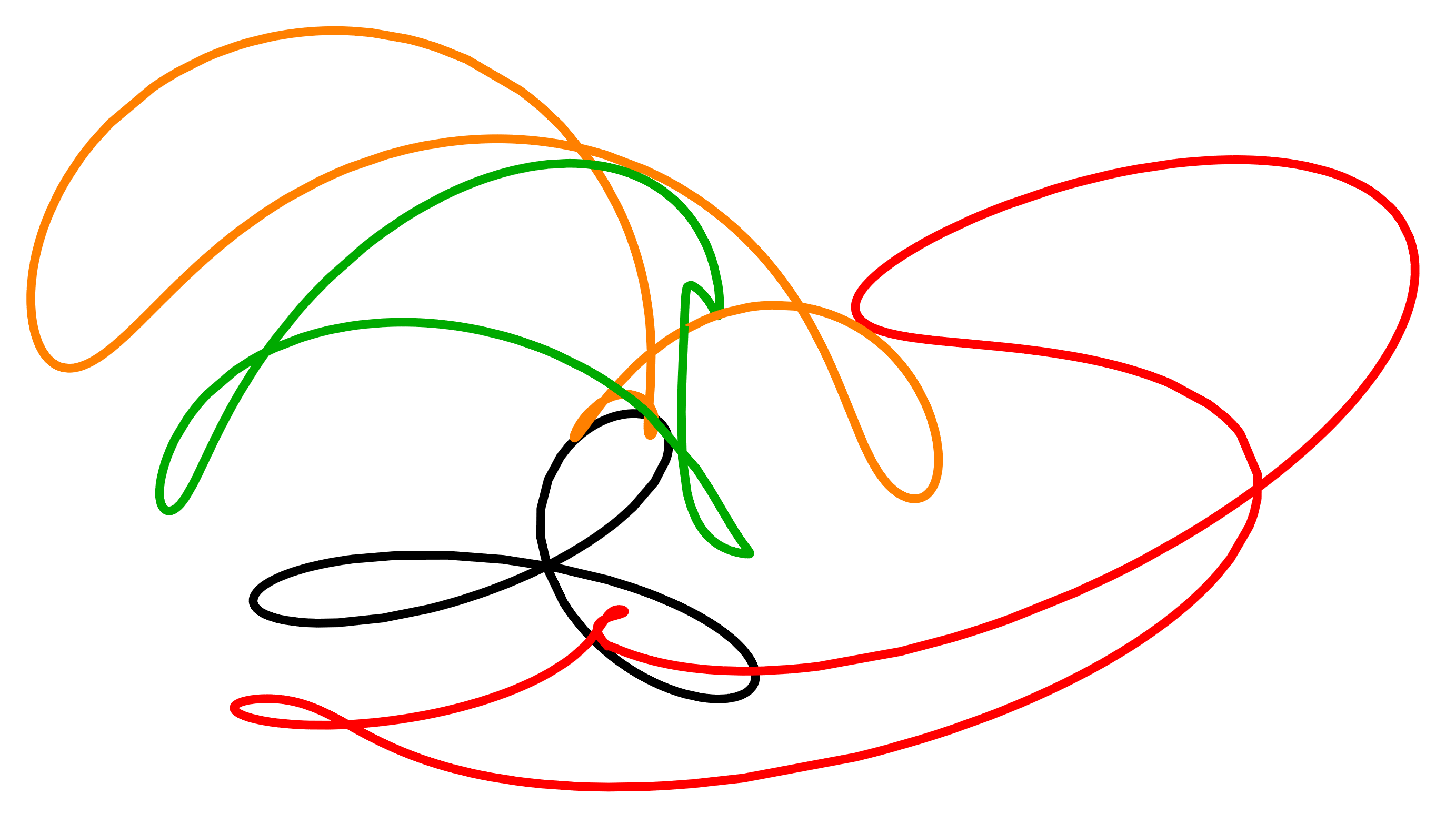}
	\end{minipage}
	\caption{Original planar curve given by $x(t) = \jj \cos(3t) e^{i t}$ (in black), and its closed Darboux transforms.}
	\label{fig:DTsmooth2}
\end{figure}

\begin{example}\label{exam:circle}
In this example, we consider the Darboux transforms of a singly-wrapped circle: let $x(t) = \jj e^{\ii t}$ be polarised by $q = \dif{t}^2 = \frac{\dif{t}^2}{m}$ as in Remark~\ref{rema:parameter}.
Noting that the $\mathcal{D}_\mu$--parallel condition \eqref{eqn:dmupar} can be reformulated as
		\begin{equation}\label{eqn:odeAlpha}
			\alpha'' = - x'' \beta - x' \beta' = x''(x')^{-1} \alpha' + \frac{\mu}{m} \alpha, \quad \beta' = -\mu (x^d)' \alpha,
		\end{equation}
where $'$ denotes the differentiation with respect to $t$, the solutions to the differential equation for $\alpha =: \alpha_0 + \jj \alpha_1$ \eqref{eqn:odeAlpha} can be written as
	\[
		\alpha = (c_0^- \alpha_0^- + c_0^+ \alpha_0^+) +  \jj (c_1^- \alpha_1^- + c_1^+ \alpha_1^+)
	\]
for some constants of integration $c_0^\pm, c_1^\pm \in \mathbb{C}$, where
	\[
		\alpha_0^\pm = e^{\frac{\ii}{2}(-1\pm\sqrt{1-4\mu})t}, \quad \alpha_1^\pm = e^{\frac{\ii}{2}(1\pm\sqrt{1-4\mu})t}.
	\]
Writing $s := \sqrt{1-4\mu}$, we then have $\beta = -(x')^{-1}\alpha' =: \beta_0 + j \beta_1$ for
	\begin{align*}
		\beta_0 &= -\frac{1}{2} e^{-\ii t} \left(c_1^- (1 - s)\alpha_1^- + c_1^+ (1 + s)\alpha_1^+\right) \\
		\beta_1 &= \frac{1}{2}e^{\ii t}\left(c_0^- (1 + s)\alpha_0^- + c_0^+ (1 - s)\alpha_0^+\right).
	\end{align*}
Noting that
	\[
		T = \alpha \beta^{-1} = (\alpha_0 + \jj \alpha_1)(\beta_0 + \jj \beta_1)^{-1} = \frac{1}{|\beta|^2}((\alpha_0 \overline{\beta_0} + \overline{\alpha_1}\beta_1) + \jj (\alpha_1 \overline{\beta_0} - \overline{\alpha_0}\beta_1)).
	\]
one obtains a Darboux transforms taking values in $\Im \mathbb{H} \cong \mathbb{R}^3$ by choosing constants of integration so that
	\[
		\Re(c_0^- \overline{c_1^-} - c_0^+ \overline{c_1^+}) = 0
	\]
via Lemma~\ref{lemm:r3condition}.
Similarly, if one chooses constants of integration so that
	\[
		c_0^- \overline{c_1^-} - c_0^+ \overline{c_1^+} = 0,
	\]
then the resulting Darboux transform takes values in the $\jj \kk$--plane via Corollary~\ref{cor:2sphere}.
In particular, for $c_0^\pm = 0$ so that $\alpha_0 = 0 = \beta_1$, we obtain for $s = \sqrt{1 - 4\mu}$,
	\begin{align*}
		\hat{x} &= x + T = \jj (e^{\ii t} + \alpha_1 \beta_0^{-1}) \\
			&= \jj \left(\frac{-e^{\ii t} \left( {c_1^+} (1 - s) e^{\ii s t} + {c_1^-} (1+s)\right)}
			{{c_1^+} (1+s) e^{\ii s t} + {c_1^-} (1 - s )}\right).
	\end{align*}

To consider the monodromy, first note that since $\beta = -(x')^{-1}\alpha'$, we see that $\varphi = e\alpha + \psi\beta$ is periodic if and only if $\alpha$ is.
Now, $\alpha_*^\pm(t+ 2\pi) = \alpha_*^\pm(t) h^\pm$ for $* = 0, 1$ with
	\[
		h^\pm = - e^{\pm \ii \pi \sqrt{1-4\mu}}.
	\]
Therefore, at the resonance points, i.e.\ $h^+ = h^-$, we have $\mu=\frac{1-k^2}4$ for some $k \in \mathbb{Z}$, and we get a $\mathbb{HP}^1$--worth of closed Darboux transforms, that is, every choice of initial conditions gives a closed Darboux transform.
Restricting to those Darboux transforms in the $\jj \kk$--plane, we obtain the following explicit paremetrisations
	\[
		\hat x= \jj \left(\frac{-e^{\ii t} \left(c_1^+ (1 - k) e^{\ii k t} + c_1^- (1+k)\right)}{c_1^+ (1 + k) e^{\ii k t}+c_1^-(1-k) }\right)
	\]
which are clearly $2\pi$--periodic for $k \in \mathbb{Z}$.
For examples of closed Darboux transforms of the circle in both $3$--space and the plane, see Figure \ref{fig:DTsmooth}.
\begin{figure}
	\centering
	\begin{minipage}{0.48\textwidth}
		\includegraphics[width=\linewidth]{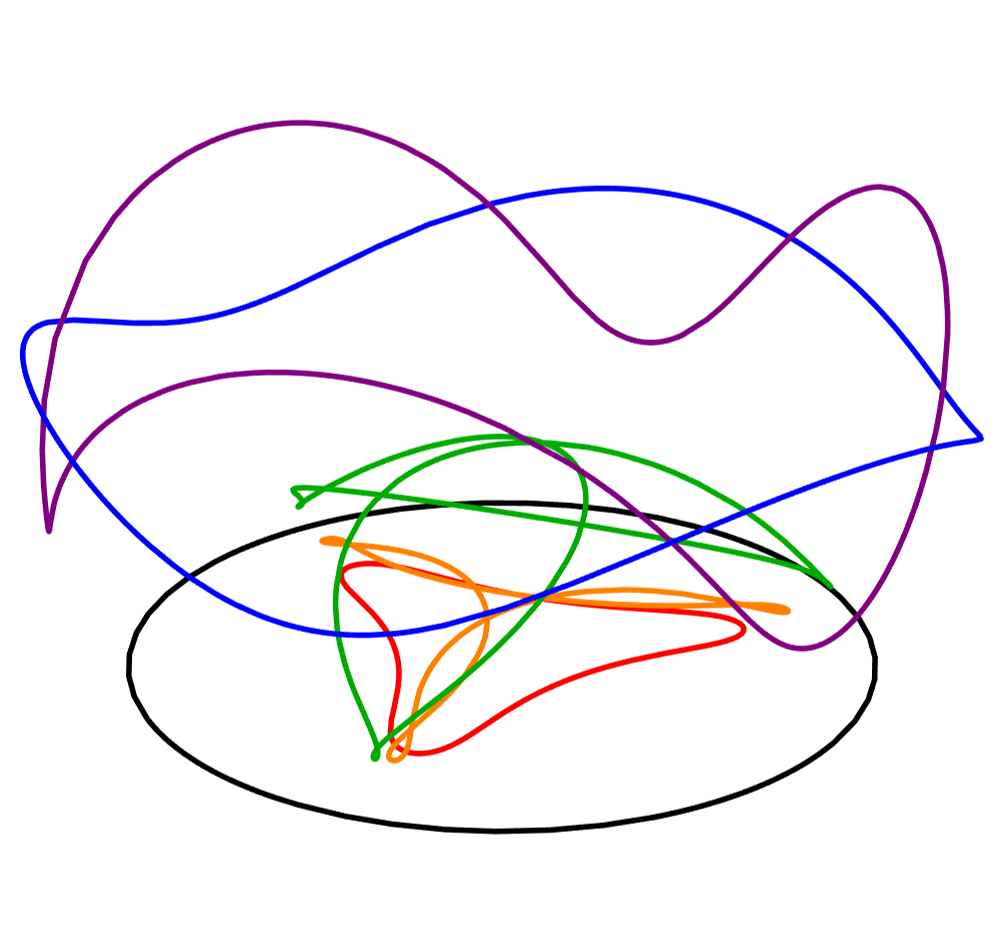}
	\end{minipage}
	\begin{minipage}{0.48\textwidth}
		\includegraphics[width=\linewidth]{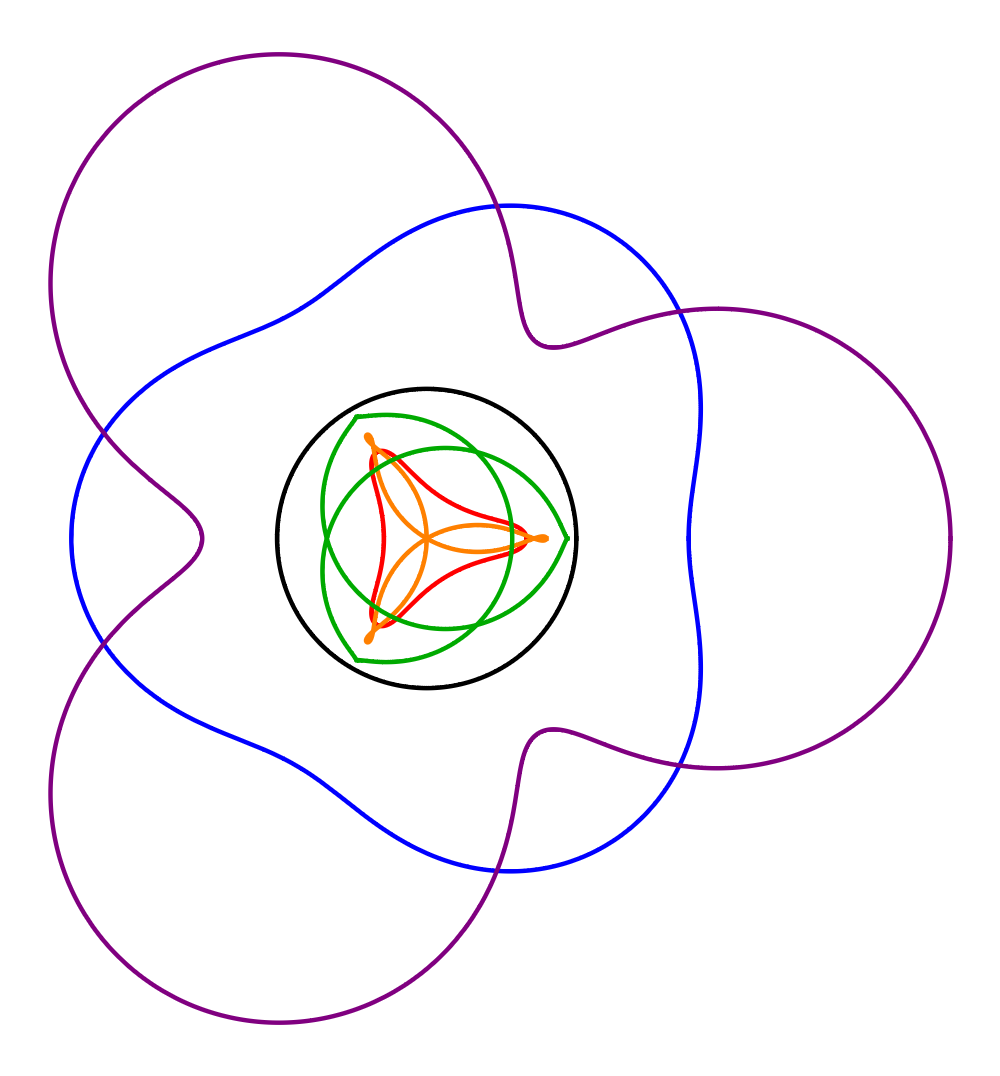}
	\end{minipage}
	\caption{Darboux transforms of the circle (drawn in black) at the resonance point with $k = 3$. On the left are those in the $3$--space with $c_0^+ = 0.5\ii, c_0^- = 0, c_1^+ = 1, c_1^- = -4, -2, -1.2, -0.1, 0.25$, on the right are those in the plane with the same constants except $c_0^+ = 0$.}
	\label{fig:DTsmooth}
\end{figure}

\begin{remark}
	We note here that by Corollary~\ref{cor:2sphere}, every Darboux transform of a circle must be contained in some $2$-sphere, determined by the circle and an initial point of the Darboux transform.
\end{remark}
\end{example}

%Away from the resonance points where $h_+=h^-$, that is $\mu=\frac{1-k^2}4, k\in\mathbb{Z}$, all closed Darboux transforms in te $jk$--plane are circles
%	\[
%		\hat x_\pm = x- \frac{\alpha_\pm x'}{\alpha_\pm'} = \frac{2 \mu-1\pm \sqrt{1-4 \mu}}{2 \mu}e^{i t}.
%	\]

\subsection{Arc-length polarisations and the bicycle monodromy}\label{subsect:twothree}
We now consider the integrable reduction of Darboux transformations by requiring that both curves of the Darboux pair are arc-length polarised:
\begin{definition}[{\cite[p.\ 48]{burstall_semi-discrete_2016}}]
	A curve $x : (I, q) \to \mathbb{R}^4 \cong \mathbb{H}$ is \emph{arc-length polarised} if
		\[
			q = |{\dif{x}}|^2.
		\]
\end{definition}
Given an arc-length polarised curve $x$, the condition for the Darboux transform $\hat{x}$ to be again arc-length polarised is identified in \cite{cho_infinitesimal_2020} in the case of plane curves.
Excluding the trivial case of curves reflected across a certain plane (see Figure~\ref{fig:counter}), the analogous statement for space curves can be proven similarly; for the sake of completeness, we give an independent argument here.
\begin{figure}
	\includegraphics[width=0.5\textwidth]{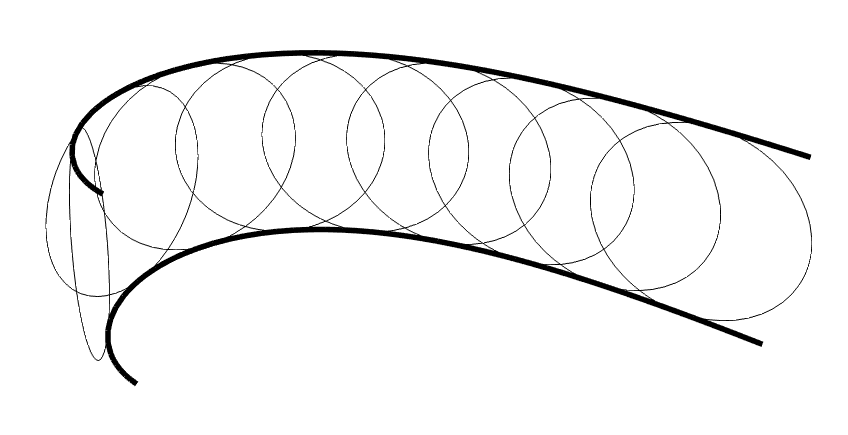}
	\caption{Trivial case of Darboux transformation keeping arc-length polarisation in $3$--space.}
	\label{fig:counter}
\end{figure}
\begin{lemma}[{\cite{cho_infinitesimal_2020}}]\label{lemm:bicycle}
	Let $x, \hat{x} : (I, q) \to \mathbb{H}$ be a (non-trivial) Darboux pair with spectral parameter $\mu$, and further assume that $x$ is arc-length polarised.
	Then $\hat{x}$ is also arc-length polarised if and only if $|\hat{x} - x|^2 = \frac{1}{\mu} > 0$ at one point $t_0 \in I$.
\end{lemma}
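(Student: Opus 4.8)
The plan is to exploit the arc-length polarisation of $x$ to rewrite both the dual curve and the Riccati equation without conjugations, then to derive a first-order linear scalar ODE for $|\hat x-x|^2$, and finally to read off the claimed equivalence from uniqueness of ODE solutions together with a norm computation for $\dif\hat x$.

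First I would observe that $\dif x\,\dif x^d = q = |\dif x|^2 = \dif x\,\overline{\dif x}$, so cancelling the (nowhere-zero) factor $\dif x$ gives $\dif x^d = \overline{\dif x}$; hence \eqref{eqn:smriccati} becomes $\dif\hat x = \mu\,T\,\overline{\dif x}\,T$ with $T := \hat x - x$. I would also note that $T$ is nowhere zero: if $T(t_1)=0$ then $x$ and $\hat x$ solve the Riccati equation with the same value at $t_1$, so $\hat x\equiv x$ by uniqueness, contradicting non-triviality. Next, using $|T|^2 = T\bar T\in\mathbb R$ (hence central), $\overline{abc}=\bar c\,\bar b\,\bar a$, and $\Re(a\bar b)=\langle a,b\rangle$, a short computation from $\dif T = \mu T\overline{\dif x}T - \dif x$ gives
\begin{align*}
	\dif\bigl(|T|^2\bigr) &= \bigl(\dif T\bigr)\bar T + T\,\overline{\dif T} \\
	&= \bigl(\mu|T|^2-1\bigr)\bigl(T\,\overline{\dif x}+\dif x\,\bar T\bigr) = 2\bigl(\mu|T|^2-1\bigr)\langle T,\dif x\rangle .
\end{align*}
Separately, multiplicativity of the quaternionic norm gives $|\dif\hat x|^2 = \mu^2|T|^4\,|\dif x|^2 = \mu^2|T|^4\,q$, so $\hat x$ is arc-length polarised exactly when $|T|^2\equiv 1/|\mu|$.

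To finish, for sufficiency I would parametrise by $t$ so that the identity above reads $(|T|^2)' = 2(\mu|T|^2-1)\langle T,x'\rangle$, a linear ODE in $f=|T|^2$ with smooth coefficients; the constant $1/\mu$ solves it, so if $|T(t_0)|^2 = 1/\mu$ then $|T|^2\equiv 1/\mu$ by uniqueness, whence $\mu^2|T|^4\equiv 1$ and $\hat x$ is arc-length polarised. For necessity, arc-length polarisation of $\hat x$ forces $|T|^2\equiv 1/|\mu|$ to be constant, so the identity above gives $(\mu|T|^2-1)\langle T,\dif x\rangle\equiv 0$; if $\mu<0$ then $\mu|T|^2-1 = -2\neq 0$, forcing $\langle T,\dif x\rangle\equiv 0$, which is precisely the degenerate reflected configuration excluded by the non-triviality hypothesis (cf.\ Figure~\ref{fig:counter}), so $\mu>0$ and $|T|^2\equiv 1/|\mu| = 1/\mu > 0$, in particular at any chosen $t_0$.

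The step I expect to be the main obstacle is, on the computational side, the quaternionic bookkeeping in $\dif(|T|^2)$ — the crucial point being that $|T|^2$ is a real scalar and hence commutes through products — and, on the conceptual side, recognising that the leftover case $\langle T,\dif x\rangle\equiv 0$ with $\mu<0$ is exactly the trivial reflected pair that the statement sets aside; the remaining ingredients are routine ODE uniqueness and norm algebra.
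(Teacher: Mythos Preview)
Your argument tracks the paper's proof almost step for step: same rewriting $\dif x^d=\overline{\dif x}$, same scalar ODE $\dif(|T|^2)=2(\mu|T|^2-1)\langle T,\dif x\rangle$ (the paper writes it as $2\mu\Re(\bar T\dif x)(|T|^2-\tfrac1\mu)$), same uniqueness argument for sufficiency, and the same reduction of necessity to excluding $|T|^2\equiv -1/\mu$. Two points need attention.

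First, a minor slip: your justification that $T$ is nowhere zero is wrong, because $x$ does \emph{not} solve the Riccati equation \eqref{eqn:smriccati} (substituting $\hat x=x$ gives $\dif x=0$), so uniqueness cannot force $\hat x\equiv x$ from $T(t_1)=0$. Fortunately your main argument never uses $T\neq 0$, so this is cosmetic.

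Second, and this is the one genuine gap: you assert that $\langle T,\dif x\rangle\equiv 0$ ``is precisely the degenerate reflected configuration'' without proving it. The paper supplies the missing link. From $\dif\hat x=\mu T\overline{\dif x}T$ and $|T|^2=-1/\mu$ one gets $\Re(\dif\hat x\,\bar T)=\mu|T|^2\Re(T\overline{\dif x})=-\langle T,\dif x\rangle=0$, so $\langle T,\dif\hat x\rangle\equiv 0$ as well. Then the paper invokes Remark~\ref{rema:circleCongruence}: a Darboux pair envelops a common circle congruence, so $\dif x$ and $\dif\hat x$ both lie in the plane of that circle; being orthogonal to the chord $T$ in that plane forces $\dif\hat x\parallel\dif x$, which is the reflected (trivial) case. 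You should add this short computation and the appeal to the circle congruence rather than leave it as an assertion.
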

\begin{proof}
	We first gather some conditions coming from the given assumption that $x$ is arc-length polarised.
	For an arc-length polarised curve $x$ so that
		\[
			\dif{x}^d = q \dif{x}^{-1} = |{\dif{x}}|^2 \dif{x}^{-1} = \dif{\bar{x}},
		\]
	we calculate using the Riccati equation \eqref{eqn:smriccati}
		\begin{equation}\label{eqn:ode}
			\begin{aligned}
				\dif{(|T|^2)} &= 2 \Re (\overline{T}\dif{T}) = 2 \Re (\overline{T}(-\dif{x} + \mu T \dif{\bar{x}}\, T)) \\
					&= - 2 \Re (\overline{T}\dif{x}) + 2 \mu |T|^2\Re (\dif{\bar{x}} \,T) = 2 \mu\Re (\overline{T}\dif{x}) \left( |T|^2 - \frac{1}{\mu}\right).
			\end{aligned}
		\end{equation}
	
	The uniqueness of the solutions to ordinary differential equations tells us that $|T|^2 = \frac{1}{\mu}$ holds at one point if and only if $|T|^2 \equiv  \frac{1}{\mu}$ on $I$. 
	Thus, the necessary direction is immediately justified.
	
	To see the sufficiency, assume that $\hat{x}$ is arc-length polarised so that $|{\dif{\hat{x}}}|^2 = q$.
	Noting that the Riccati equation \eqref{eqn:smriccati} implies
		\[
			|{\dif{\hat{x}}}|^2 = \mu^2 |T|^4 |{\dif{x}}^d|^2 = \mu^2 |T|^4 |{\dif{x}}|^2 ,
		\]
	the assumption that both $x$ and $\hat{x}$ is arc-length polarised tells us
		\[
			|T|^4 \equiv \frac{1}{\mu}.
		\]
	Hence, we only need  suppose for contradiction that $|T|^2 \equiv -\frac{1}{\mu}$.
	Then via the ordinary differential equation \eqref{eqn:ode}, we must have $\Re (\overline{T} \dif{x}) \equiv 0$, implying via the Riccati equation \eqref{eqn:smriccati} that
		\[
			\Re(\dif{\hat{x}} \, \overline{T}) = \mu |T|^2 \Re(T \dif{\bar{x}}) = - \Re(\overline{T} \dif{x}) = 0,
		\]
	on $I$, so that $\langle \dif{\hat{x}}, T \rangle = \langle \dif{x}, T \rangle \equiv 0$.
	Therefore, Remark~\ref{rema:circleCongruence} implies that $\dif{\hat{x}}$ is always parallel to $\dif{x}$, telling us that this is the trivial case.
\end{proof}

Such integrable reduction is known as the \emph{tractrix construction} or the \emph{bicycle correspondence}, while the monodromy of the bicycle correspondence is called the \emph{bicycle monodromy} (see, for example, \cite{hoffmann_discrete_2008, tabachnikov_bicycle_2017, bor_tire_2020}).
As bicycle correspondences are special cases of Darboux transformations, the bicycle monodromy can also be considered in terms of Darboux transformations (see Figure~\ref{fig:smoothC}).

\begin{figure}
	\centering
	\begin{minipage}{0.48\textwidth}
		\includegraphics[width=\linewidth]{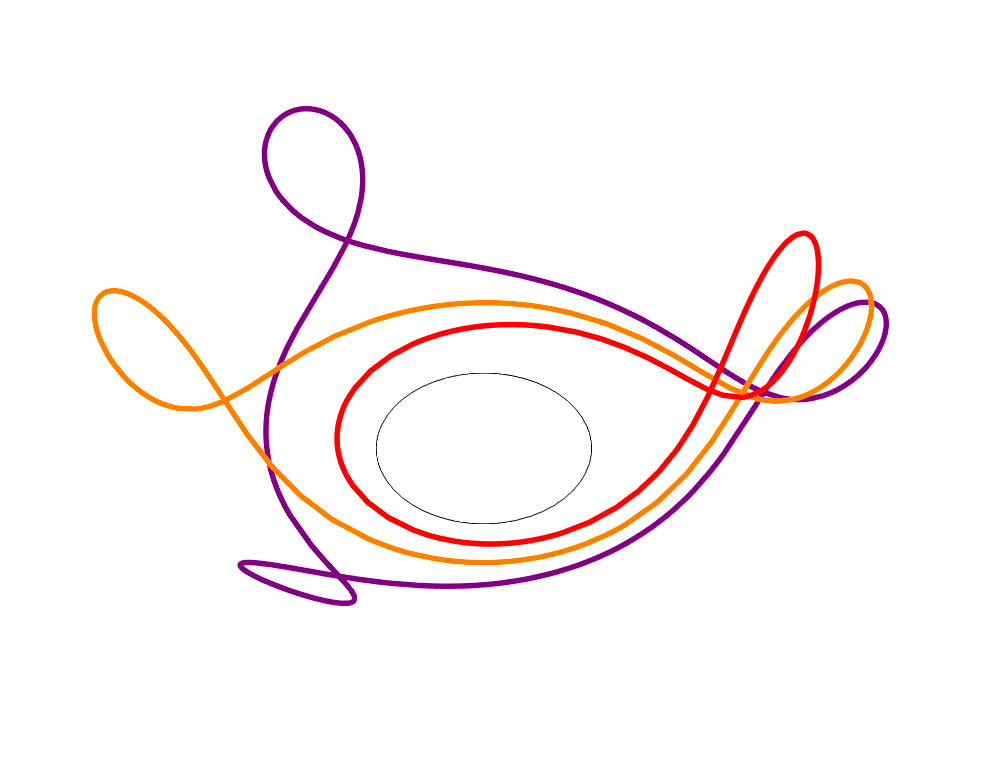}
	\end{minipage}
	\begin{minipage}{0.48\textwidth}
		\includegraphics[width=\linewidth]{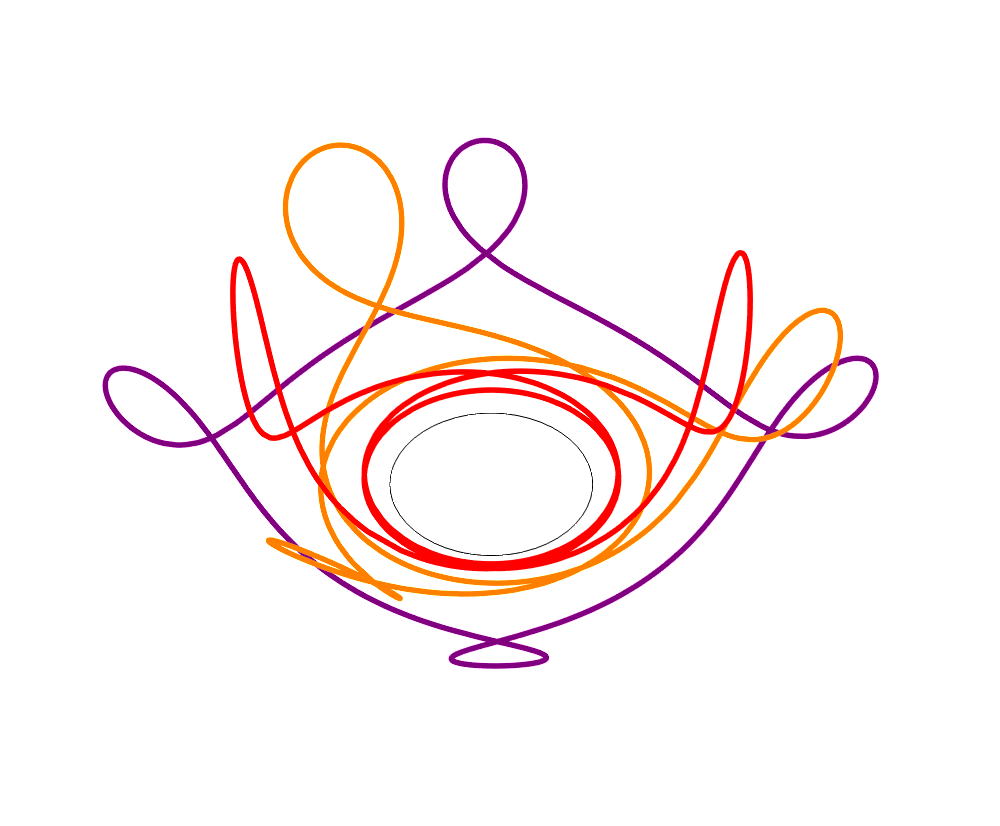}
	\end{minipage}
	\caption{Darboux transforms of the circle (drawn in black) keeping the arc-length polarisation.}
	\label{fig:smoothC}
\end{figure}

\begin{example}\label{exam:smoothB}
	The closed bicycle correspondences in the plane of a circle are called \emph{circletons} in \cite{kilian_dressing_2015}; in this example, we obtain explicit parametrisations of all circletons.
	To make direct comparison with the results in \cite{kilian_dressing_2015}, we consider the transformations in the plane given by $\Span\{1, \ii\} \cong \mathbb{C}$.
	
	For an arc-length polarised circle parametrised via $x(t) = e^{\ii t}$, recall that the Darboux transforms $\hat{x}$ with respect to spectral parameter $\mu$ are given by $\dif{}_\mu$--parallel section $\varphi =e\alpha+\psi\beta$ so that we have $\alpha = c^- \alpha^- + c^+\alpha^+$ with 
		\[
			\alpha^\pm = e^{\frac{\ii}{2} \left(1\pm\sqrt{1-4 \mu}\right) t}\
		\]
	for some constants of integrations $c^\pm \in \mathbb{C}$.
	Hence, we see that
		\[
			\beta = -\frac{1}{2} e^{- \ii t} \left(c^- (1-\sqrt{1-4 \mu}) \alpha^- 
				+ c^+ (1+\sqrt{1-4 \mu}) \alpha^+ \right).
		\]
	Excluding the trivial case of $\mu = \frac{1}{4}$, by Lemma~\ref{lemm:bicycle}, we then have that $\hat{x}$ is also arc-length polarised (and thus arc-length parametrised) if and only if $|\alpha\beta^{-1}|^2 = \frac{1}{\mu} > 0$, a condition when evaluated at $t =0$ becomes
		\[
			| c^+ + c^-|^2 = \frac 1{4\mu}\left|c^-(1-\sqrt{1-4\mu}) + c^+(1 + \sqrt{1-4\mu})\right|^2
		\]
	so that
		\[
			c^+ + c^- = \frac {e^{\ii \tau}}{2\sqrt\mu} \left(c^-(1-\sqrt{1-4\mu}) + c^+(1 + \sqrt{1-4\mu})\right)
		\]
	for some $\tau \in \mathbb{R}$.
	Thus, if $1- \frac {e^{\ii\tau}}{2\sqrt\mu}  (1 + \sqrt{1-4\mu}) \neq 0$, that is $\mu \neq \frac{1}{4\cos^2(\tau)}$, we obtain $c^+ =\chi c^-$ with
		\[
			\chi = \frac{-2 \sqrt {\mu} + e^{\ii \tau}(1-\sqrt{1-4\mu})}{2\sqrt{\mu} - e^{\ii \tau}(1+\sqrt{1-4\mu})}. 
		\]
	Otherwise, we obtain $c^-=0$ for any choice of $c^+$.
	Therefore, all arc-length polarised Darboux transforms $\hat{x}$ of an arc-length polarised circle $x$ are given by
		\[
			\hat{x}(t) = \frac{e^{\ii t} \left( \chi  \left(\sqrt{1-4 \mu}-1\right) e^{\ii \sqrt{1-4\mu} t}-\left(\sqrt{1-4\mu}+1\right)\right)}
				{{\chi}\left(\sqrt{1-4 \mu}+1\right) e^{\ii \sqrt{1-4 \mu} t}-\left(\sqrt{1-4 \mu}-1\right)}.
		\]
	
	We now investigate the monodromy of the Darboux transformation $\hat{x}$ over a single period.
	Since we have explicit formulas, it is straightforward to calculate that since we have $\mu > 0$, the Darboux transformation is a closed curve if and only if either $c^- = 0$ or $\chi = 0$.
	
\begin{figure}
	\begin{minipage}{0.193\textwidth}
		\includegraphics[width=\linewidth]{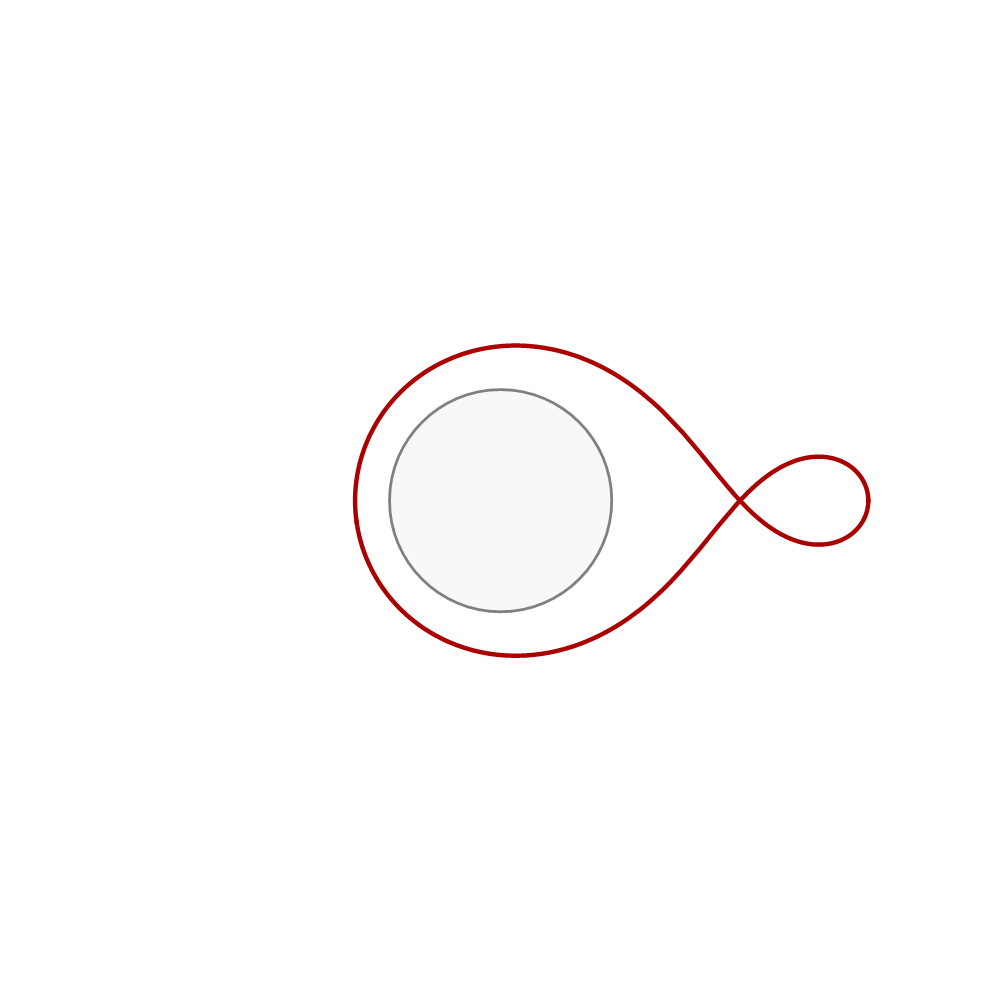}
	\end{minipage}
	\begin{minipage}{0.193\textwidth}
		\includegraphics[width=\linewidth]{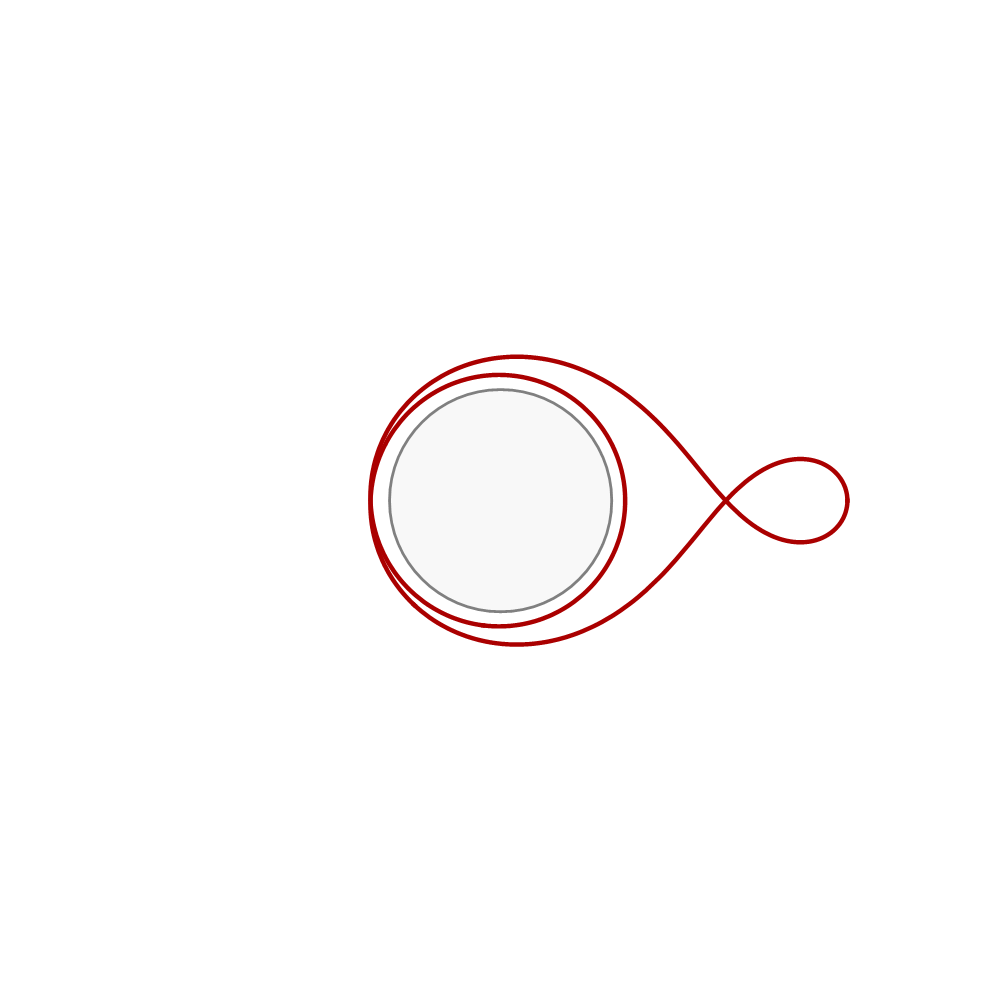}
	\end{minipage}
	\begin{minipage}{0.193\textwidth}
		\includegraphics[width=\linewidth]{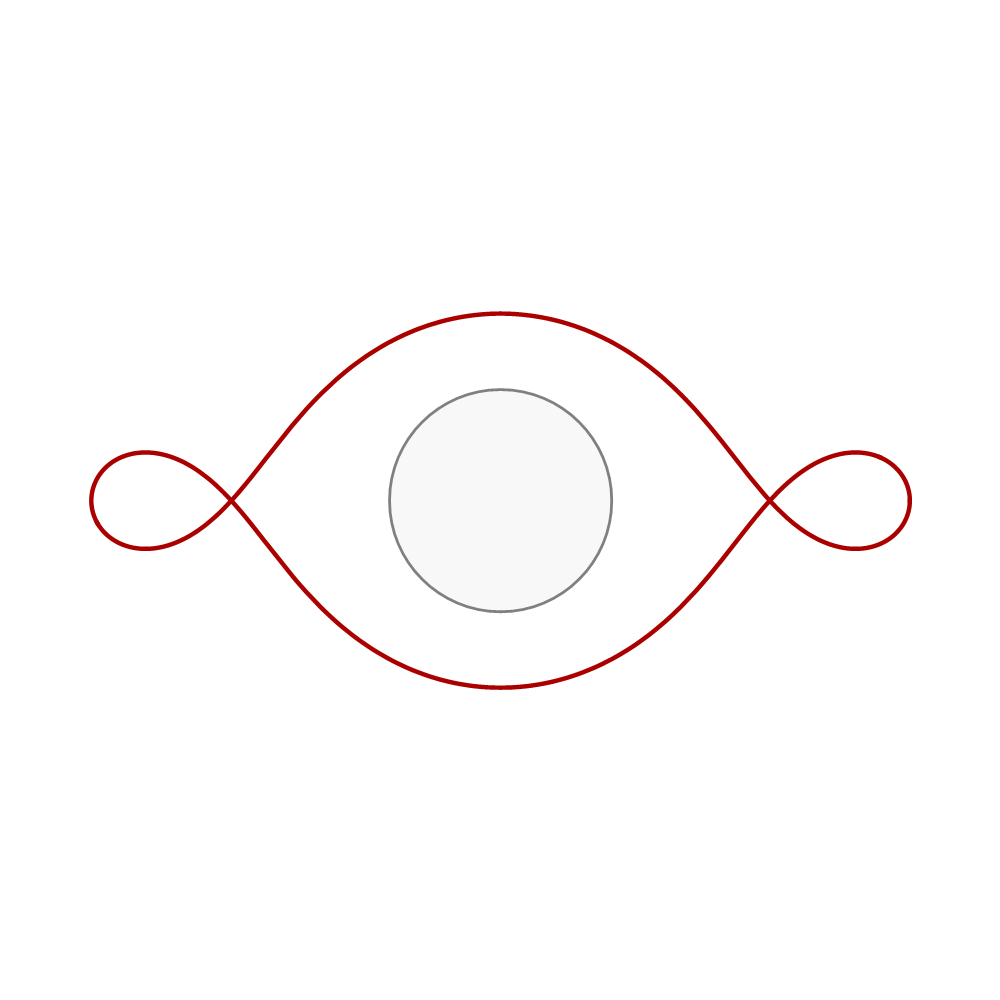}
	\end{minipage}
	\begin{minipage}{0.193\textwidth}
		\includegraphics[width=\linewidth]{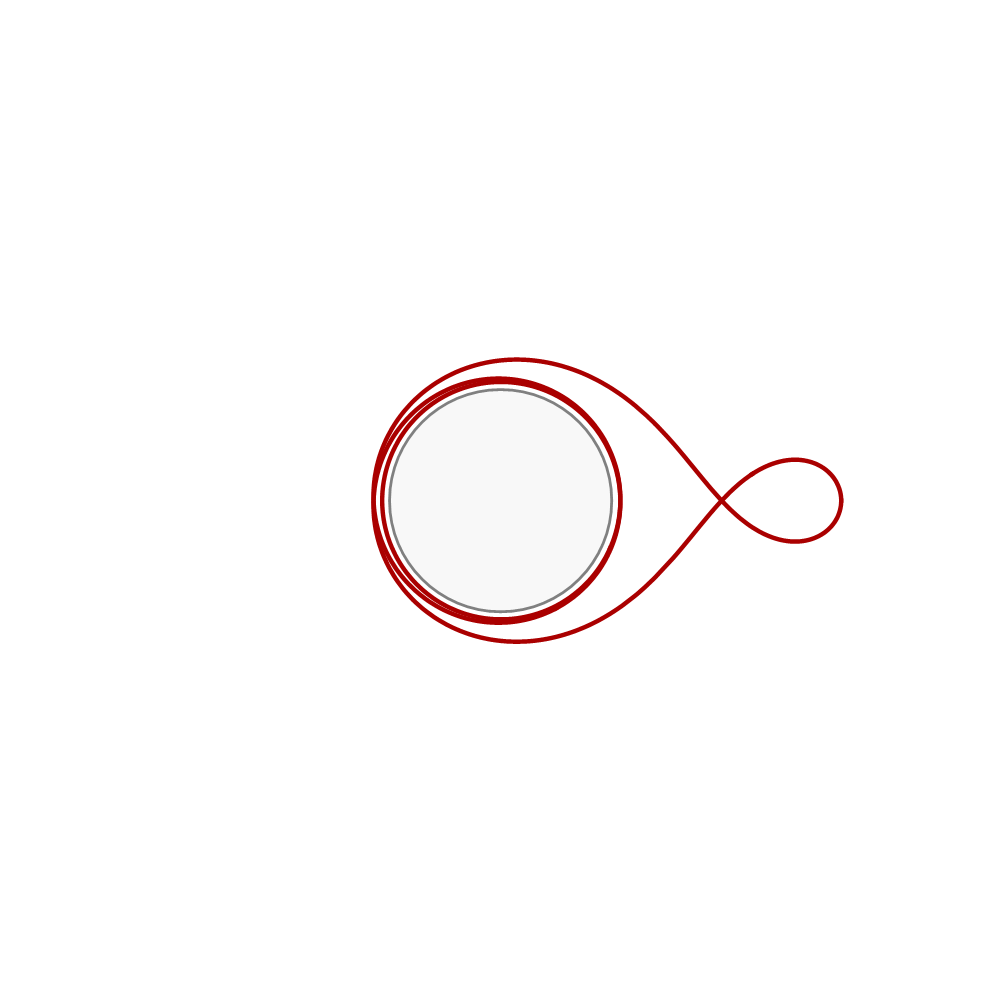}
	\end{minipage}
	\begin{minipage}{0.193\textwidth}
		\includegraphics[width=\linewidth]{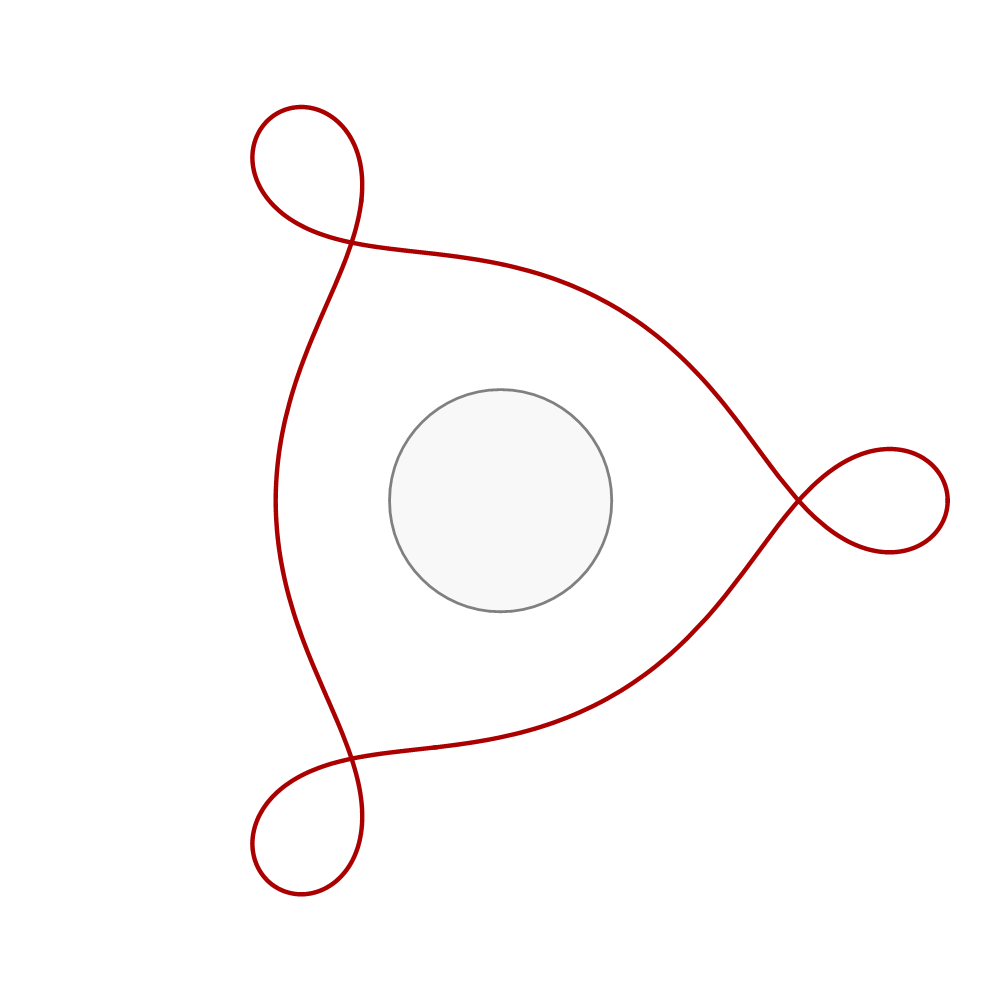}
	\end{minipage}
	\\
	\begin{minipage}{0.193\textwidth}
		\includegraphics[width=\linewidth]{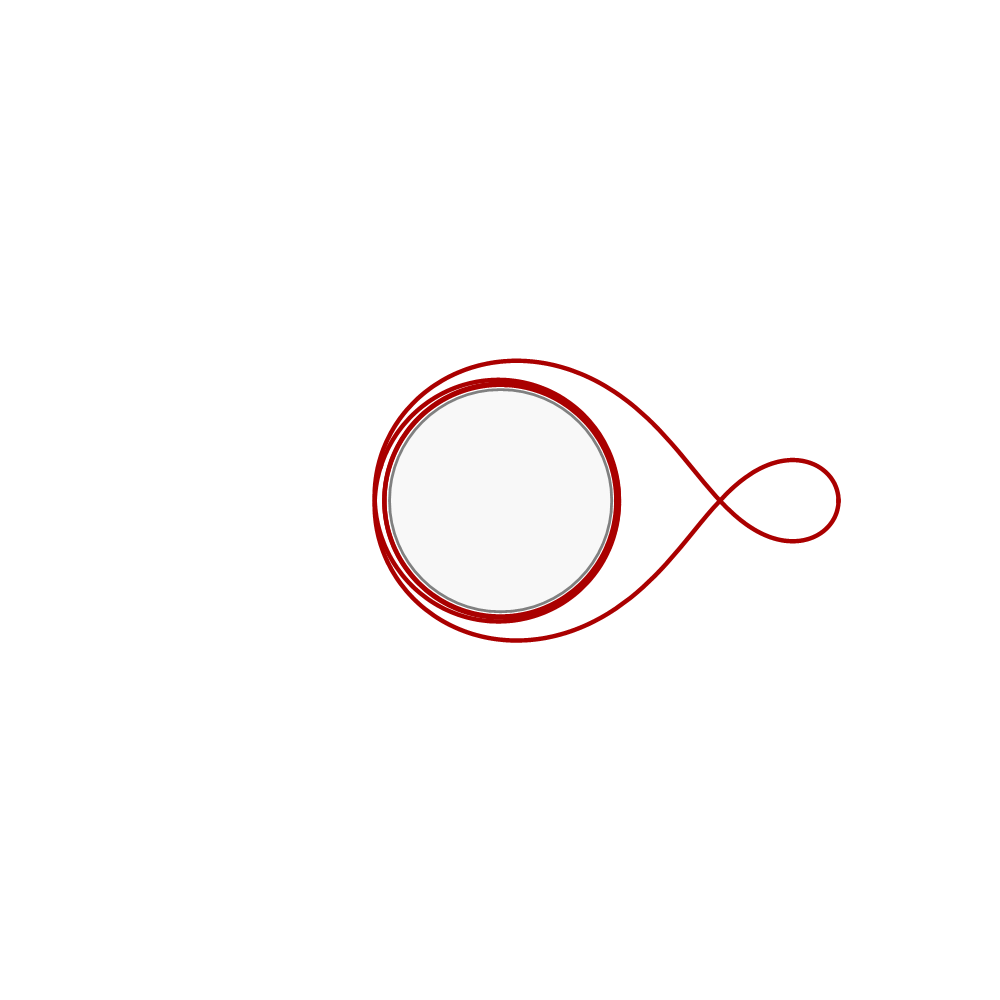}
	\end{minipage}
	\begin{minipage}{0.193\textwidth}
		\includegraphics[width=\linewidth]{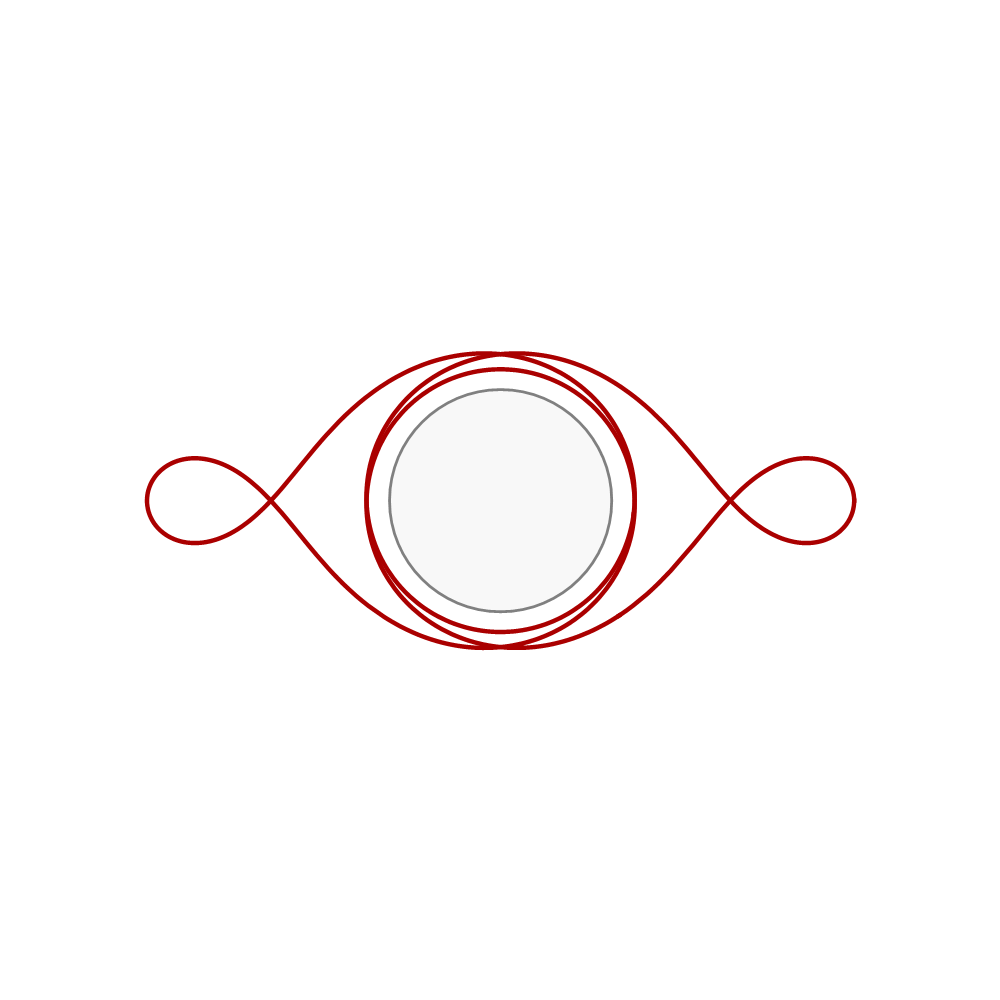}
	\end{minipage}
	\begin{minipage}{0.193\textwidth}
		\includegraphics[width=\linewidth]{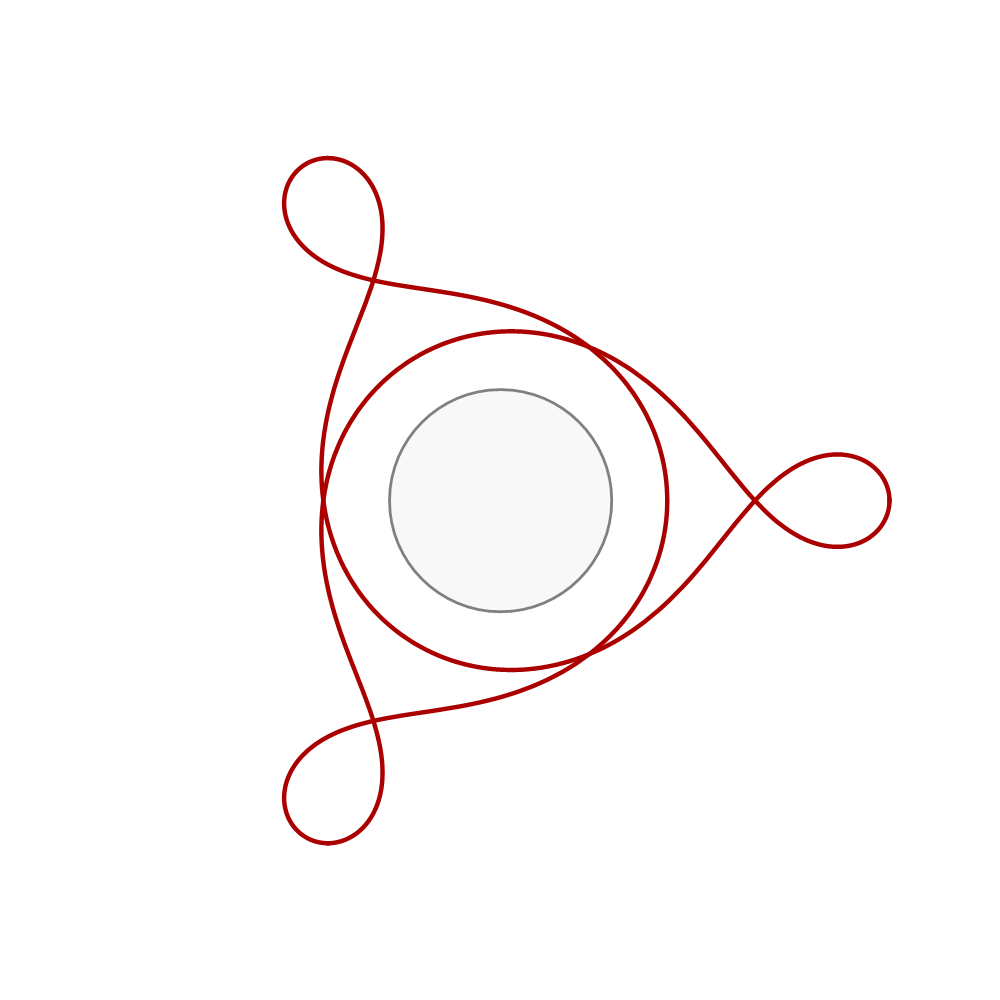}
	\end{minipage}
	\begin{minipage}{0.193\textwidth}
		\includegraphics[width=\linewidth]{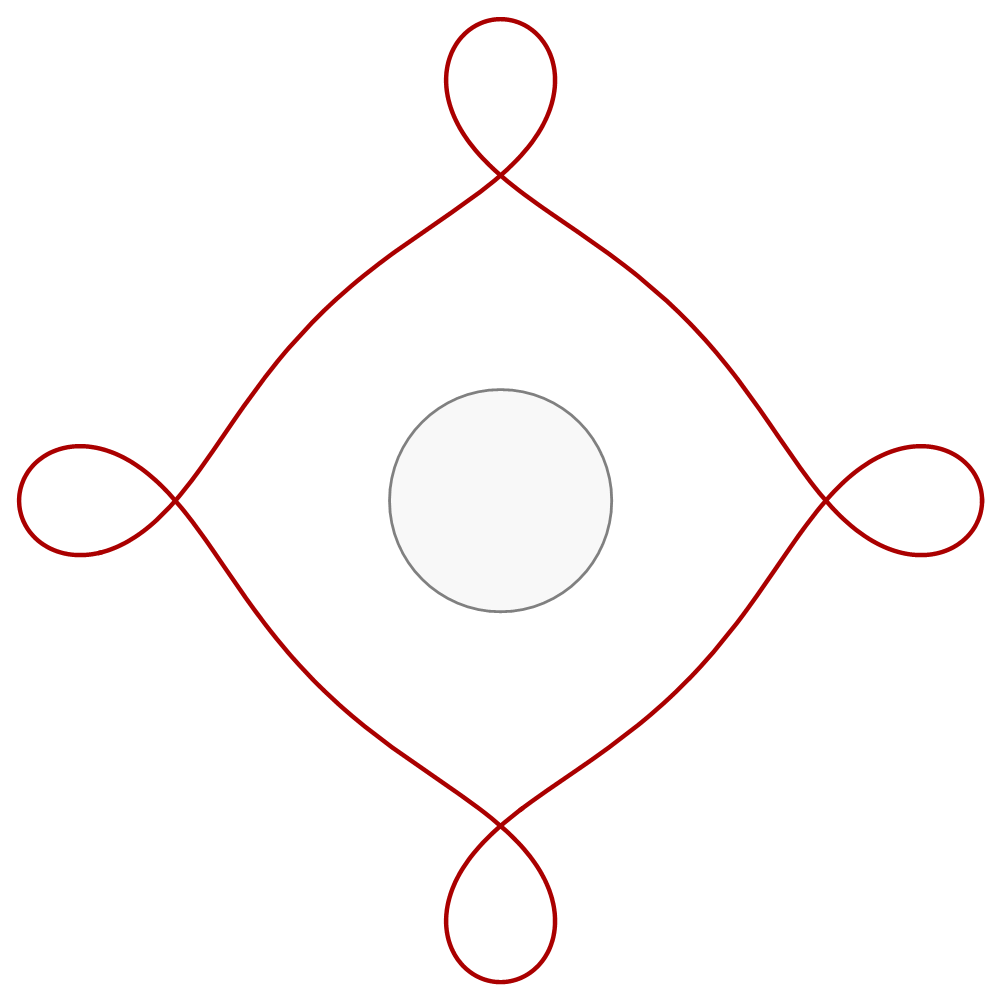}
	\end{minipage}
	\caption{Circletons as seen in \cite[Figure 5.1]{kilian_dressing_2015} and \cite[Figure 21]{bor_tire_2020}, drawn with $\tau = \pi$ and $(k, \ell) = (1,2)$, $(1,3)$,
	 $(2,3)$, $(1,4)$, $(3,4)$, $(1,5)$, $(2,5)$, $(3,5)$, $(4,5)$ over $\ell$--fold cover of the circle.}
	\label{fig:smoothB}
\end{figure}
	To investigate the resonance points of the transformation to obtain non-trivial transforms, we first calculate as in Example~\ref{exam:circle} that
		\[
			\alpha_{\pm}(t+ 2\pi) = \alpha_\pm(t) e^{\ii \pi} e^{\pm \ii \pi \sqrt{1-4\mu}}.
		\]
	Thus, $\alpha_\pm$ have the same multiplier if and only if
		\[
			\sqrt{1-4\mu}= k \in \mathbb{Z},
		\]
	a contradiction in the current example since now we have $\mu > 0$.
	
	Thus, we consider the $\ell$--fold cover of $[0, 2\pi]$ and calculate
		\[
			\alpha_{\pm}(t+ 2\pi \ell) = \alpha_\pm(t) e^{\ii \ell \pi } e^{\pm \ii \sqrt{1-4\mu} \ell \pi},
		\]
	allowing us to deduce that, $\alpha_\pm$ have the same multiplier if and only if 
		\[
			\ell \sqrt{1-4\mu}= k \in \mathbb{Z}.
		\]
		
	Thus, we have circletons over $\ell$--fold cover of $[0, 2\pi]$ if and only if $\mu=\frac{\ell^2-k^2}{4 \ell^2}$ and $\ell > k > 0$ by Lemma~\ref{lemm:bicycle}, recovering the result of \cite{kilian_dressing_2015}.
	For examples of circletons, see Figure~\ref{fig:smoothB}.
\end{example}

\begin{remark}\label{rema:non-constant}
	The linearisation of Riccati-type equations for finding Darboux transforms of polarised curves and the subsequent investigation of monodromy can also be theoretically carried out in the lightcone model of Möbius geometry as introduced in \cite{burstall_semi-discrete_2016}.
	However, a quick calculation in the lightcone model using circles polarised by arc-length yields second-order ordinary differential equations with \emph{non-constant coefficients}, as opposed to that with \emph{constant coefficients} in the quaternionic model.
	Thus, the quaternionic approach enables us to efficiently obtain closed-form solutions, which is central to the explicit investigation of monodromy.
\end{remark}

\section{Monodromy of discrete Darboux transformations}\label{sect:three}
Having discussed the smooth theory in detail, we now aim to investigate the monodromy of Darboux transformations of a discrete closed curve.
The gauge theoretic description of Darboux transformations was central to the consideration of the monodromy in the smooth case; therefore, we briefly review the discrete gauge theory here (for a more detailed introduction, see, for example, \cite{burstall_isothermic_2011, burstall_notes_2017}).

Let now $\domain$ denote a discrete interval, simply-connected in the sense of \cite[\S 2.3]{burstall_discrete_2020}.
A \emph{bundle} on $\domain$ assigns a set $V_i$ to each vertex $i \in \domain$, and we call $\sigma : \domain \to \cup V_i$ a \emph{section} if $\sigma_i \in V_i$ for all $i \in \domain$.

A \emph{discrete connection} assigns a bijection $r_{ji}: V_i \to V_j$ on each oriented edge $(ij)$ so that $r_{ij} r_{ji} = \iden_i$, while a discrete gauge transformation $\mathcal{G}$ acts on a connection $r_{ji}$ via
	\[
		(\mathcal{G} \bullet r)_{ji} = \mathcal{G}_j \circ r_{ji} \circ \mathcal{G}_i^{-1}
	\]
where $\mathcal{G}_i : V_i \to V_i$ is an automorphism defined at every vertex $i \in \domain$.

A section $\sigma$ is $r$--parallel if on any oriented edge $(ij)$, we have
	\[
		r_{ji} \sigma_i = \sigma_j.
	\]
Note that since we are working with discrete intervals, we have that every discrete connection is \emph{flat}.

\subsection{Flat connection of a discrete polarised curve}
Let $x : (\domain, \tfrac{1}{m}) \to \mathbb{R}^4 \cong \mathbb{H}$ be a discrete curve defined on a polarised domain $(\domain, \tfrac{1}{m})$ for a strictly positive or negative function $m$ defined on (unoriented) edges.
Recalling the definition of exterior derivatives \cite[Definition 2.2]{burstall_discrete_2020} for $0$-forms $x$
	\[
		\dif{x}_{ij} := x_i - x_j,
	\]	
the \emph{dual curve} $x^d : (\domain, \tfrac{1}{m}) \to \mathbb{H}$ of a discrete polarised curve $x : (\domain, \tfrac{1}{m}) \to \mathbb{H}$ is defined via the discrete $1$-form
	\[
		\dif{x}^d_{ij} = \frac{1}{m_{ij}} \dif{x}_{ij}^{-1}.
	\]
As in the smooth case,  let $\psi = \begin{psmallmatrix} x \\ 1 \end{psmallmatrix}$ so that we can view $L = \psi \mathbb{H}$ as a line subbundle of the trivial vector bundle $\underline{\mathbb{H}}^2 := I \times \mathbb{H}^2$.
On every edge $(ij)$, define a linear isomorphism $\mathcal{D}^\lambda_{ji} : \{i\} \times  \mathbb{H}^2 \to \{j\} \times \mathbb{H}^2$ for some $\lambda \in \mathbb{R}$ via
	\begin{equation}\label{eqn:discretecalD}
		\mathcal{D}^\lambda_{ji} := \iden_{ji} +
			\begin{pmatrix}
				0 & \dif{x}_{ij} \\
				\lambda \dif{x}^d_{ij} & 0
			\end{pmatrix}
	\end{equation}
for the identity map $\iden_{ji} :  \{i\} \times  \mathbb{H}^2 \to \{j\} \times \mathbb{H}^2$.
Then it is immediate (with some abuse of notation on the identity map) that
	\[
		\mathcal{D}^\lambda_{ij} \mathcal{D}^\lambda_{ji} = \mathcal{D}^\lambda_{ji} \mathcal{D}^\lambda_{ij} = \left( 1 - \frac{\lambda}{m_{ij}} \right) \iden,
	\]
implying that $\mathcal{D}^\lambda$ does not define a discrete connection.

Therefore, we instead consider the projective transformation $(\mathcal{D}^\lambda)^P_{ji} : \{i\} \times \mathbb{HP}^1 \to \{j\} \times \mathbb{HP}^1$ induced by $\mathcal{D}^\lambda_{ji}$, so that $(\mathcal{D}^\lambda)^P_{ji}$ is a (flat) connection defined on the trivial bundle $\underline{\mathbb{HP}}^1$.

\begin{remark}
	Alternatively, one could normalise $\mathcal{D}^\lambda_{ji}$ so that $\mathcal{D}^\lambda_{ji}$ defines a discrete connection on the trivial bundle $\underline{\mathbb{H}}^2$.
	The choice for the projective bundle $\underline{\mathbb{HP}}^1$ is made to avoid the introduction of square root terms in the discrete connection, and to keep the similarity in the expression of $\mathcal{D}_\lambda$ of the smooth case \eqref{eqn:smoothcalD} and $\mathcal{D}^\lambda$ in the discrete case \eqref{eqn:discretecalD}.
\end{remark}
%
%
%, and
	
%\begin{definition}
%	Two polarized curves $x, \hat{x} : (\Sigma, \tfrac{1}{\mu}) \to \mathbb{H}$ are called a Darboux pair with parameter $\hat{\mu}$ if on every edge $(ij)$,
%		\[
%			(x_i - x_j) (x_j - \hat{x}_j)^{-1} (\hat{x}_j - \hat{x}_i) (x_i - \hat{x}_i)^{-1} = \frac{\hat{\mu}}{\mu_{ij}}.
%		\]
%\end{definition}

Now for $\mathcal{G} = (e \: \psi)$, we have that
	\begin{align*}
		\dif{}^\lambda_{ji} &:= (\mathcal{G} \bullet \mathcal{D}^\lambda)_{ji}
%			&= \mathcal{G}_j \mathcal{D}_{ji} \mathcal{G}^{-1}_i \\
%			&= \begin{pmatrix}
%				1 & x_j \\
%				0 & 1
%			\end{pmatrix} \left(  id_{ji} +
%			\begin{pmatrix}
%				0 & \dif{x}_{ij} \\
%				\lambda \dif{x}^d_{ij} & 0
%			\end{pmatrix} \right)
%			 \begin{pmatrix}
%				1 & -x_i \\
%				0 & 1
%			\end{pmatrix} \\
%			&= \left( \begin{pmatrix}
%				1 & x_j \\
%				0 & 1
%			\end{pmatrix} +
%			\begin{pmatrix}
%				\lambda x_j \dif x^d_{ij} & \dif{x}_{ij} \\
%				\lambda \dif{x}^d_{ij} & 0
%			\end{pmatrix} \right)
%			 \begin{pmatrix}
%				1 & -x_i \\
%				0 & 1
%			\end{pmatrix} \\
%			&= \left( \begin{pmatrix}
%				1 & x_j - x_i \\
%				0 & 1
%			\end{pmatrix} +
%			\begin{pmatrix}
%				\lambda x_j \dif x^d_{ij} & -\lambda x_j \dif x^d_{ij} x_i + \dif{x}_{ij} \\
%				\lambda \dif{x}^d_{ij} & -\lambda \dif{x}^d_{ij} x_i
%			\end{pmatrix} \right) \\
			= \iden_{ji} + \lambda
			\begin{pmatrix}
				 x_j \dif x^d_{ij} & - x_j \dif x^d_{ij} x_i \\
				 \dif{x}^d_{ij} & - \dif{x}^d_{ij} x_i
			\end{pmatrix} \\
			&=: \iden_{ji} + \lambda \eta_{ji}
	\end{align*}
where
%that for $a, b \in \mathbb{H}$,
%	\begin{equation}\label{eqn:imageeta}
%		\eta_{ji}\begin{pmatrix} a \\ b \end{pmatrix} = \begin{pmatrix} x_j \dif{x}^d_{ij} a - x_j \dif{x}^d_{ij} x_i b \\ \dif{x}^d_{ij} a - \dif{x}^d_{ij} x_i b \end{pmatrix} = \begin{pmatrix} x_j \\ 1 \end{pmatrix} (\dif{x}^d_{ij} a - \dif{x}^d_{ij} x_i b )
%	\end{equation}
%so that
	\[
		\im \eta_{ji} =  \begin{pmatrix} x_j \\ 1 \end{pmatrix} \mathbb{H} \quad\text{and}\quad \ker \eta_{ji} =  \begin{pmatrix} x_i \\ 1 \end{pmatrix} \mathbb{H}.
	\]

\begin{definition}\label{def:discFlat}
	Defining $(\dif{}^\lambda)^P_{ji} : \{i\} \times \mathbb{H}\mathbb{P}^1 \to \{j\} \times \mathbb{H}\mathbb{P}^1$ to be the projective isomorphism induced by $\dif{}^\lambda_{ji} : \{i\} \times \mathbb{H}^2 \to \{j\} \times \mathbb{H}^2$, we call the discrete connection $(\dif{}^\lambda)^P_{ji}$ the \emph{discrete (flat) connection} associated to $x$.
\end{definition}

The central ethos of discrete differential geometry dictates that the discrete integrable structure is inherent in the transformations and permutability of the smooth integrable structure.
In our case, the integrable structure of discrete polarised curves, represented by the discrete (flat) connections, should be related to the Darboux transformations of smooth polarised curves, represented by the gauge transformations introduced in Proposition~\ref{prop:gauge}.
The next proposition clarifies this relationship: the corresponding points of successive Darboux transforms of a smooth curve give the discrete curve, the spectral parameters of the Darboux transformations become the discrete polarisation (see Figure~\ref{fig:integ}), and the gauge transformation of the Darboux transformations induces the discrete (flat) connection:
%To see this, we first prepare the following lemma:
\begin{proposition}\label{prop:discFlat}
	For the splitting $\underline{\mathbb{H}}^2 = L_i \oplus L_j$, denote by $\pi$ the projection onto the line bundle $L$ defined at every vertex.
	Then we have that
		\[
			\dif{}^\lambda_{ji} =  \pi_i +  \frac{m_{ij} - \lambda}{m_{ij}} \pi_j.
		\]
\end{proposition}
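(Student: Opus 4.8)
The plan is to verify the claimed identity $\dif{}^\lambda_{ji} = \pi_i + \frac{m_{ij}-\lambda}{m_{ij}}\pi_j$ by checking that both sides of the equation agree when applied to a basis of $\{i\}\times\mathbb{H}^2$ that is adapted to the splitting $\underline{\mathbb{H}}^2 = L_i \oplus L_j$; since both maps are $\mathbb{H}$-linear, agreement on a basis suffices. Concretely, I would pick the natural spanning sections $\psi_i = \begin{psmallmatrix} x_i \\ 1 \end{psmallmatrix}$ and $\psi_j = \begin{psmallmatrix} x_j \\ 1 \end{psmallmatrix}$, which generate $L_i$ and $L_j$ respectively. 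Because $\domain$ is a discrete interval and $x$ is regular (so $x_i \neq x_j$ on each edge), these two quaternionic lines are distinct and hence $\psi_i, \psi_j$ form a basis of $\mathbb{H}^2$; this is exactly the hypothesis that makes the direct-sum splitting and the projections $\pi$, $1-\pi$ well-defined.

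The key computation is then twofold. First, apply $\dif{}^\lambda_{ji} = \iden_{ji} + \lambda\eta_{ji}$ to $\psi_i$: since $\ker\eta_{ji} = L_i = \psi_i\mathbb{H}$, we get $\dif{}^\lambda_{ji}\psi_i = \psi_i$. On the right-hand side, $\pi_i\psi_i = \psi_i$ and $\pi_j\psi_i = 0$ (as $\psi_i \in L_i$ and $\pi_j$ projects onto $L_j$ along $L_i$), so the right-hand side also sends $\psi_i$ to $\psi_i$. The two maps agree on $L_i$. Second, apply $\dif{}^\lambda_{ji}$ to $\psi_j$: here $\eta_{ji}\psi_j$ must be computed from the explicit matrix form of $\eta_{ji}$, using $\dif{x}^d_{ij} = \frac{1}{m_{ij}}\dif{x}_{ij}^{-1}$ and $\dif{x}_{ij} = x_i - x_j$. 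The bottom row gives $\dif{x}^d_{ij}(x_j - x_i) = \frac{1}{m_{ij}}\dif{x}_{ij}^{-1}(-\dif{x}_{ij}) = -\frac{1}{m_{ij}}$, and the top row gives $x_j$ times the same scalar; hence $\eta_{ji}\psi_j = -\frac{1}{m_{ij}}\psi_j$, which exhibits the fact noted before the definition that $\im\eta_{ji} = L_j$. Therefore $\dif{}^\lambda_{ji}\psi_j = \psi_j - \frac{\lambda}{m_{ij}}\psi_j = \frac{m_{ij}-\lambda}{m_{ij}}\psi_j$, which is precisely what $\pi_i + \frac{m_{ij}-\lambda}{m_{ij}}\pi_j$ returns on $\psi_j$ since $\pi_i\psi_j = 0$ and $\pi_j\psi_j = \psi_j$. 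Agreement on both $L_i$ and $L_j$ completes the argument.

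The only mild subtlety — and the place where a little care is warranted rather than a genuine obstacle — is bookkeeping the identification $\{i\}\times\mathbb{H}^2 \to \{j\}\times\mathbb{H}^2$ implicit in $\iden_{ji}$, so that $\pi_i$ and $\pi_j$ (a priori defined as maps on the abstract fibre over a single vertex) are being read as endomorphisms on the same copy of $\mathbb{H}^2$; this is the same abuse of notation the text already flagged for $\mathcal{D}^\lambda$. Once that is fixed, everything reduces to the one-line quaternionic simplification $\dif{x}^d_{ij}\dif{x}_{ij} = \frac{1}{m_{ij}}$ and the observations $\ker\eta_{ji} = L_i$, $\im\eta_{ji} = L_j$, both of which are already recorded in the excerpt. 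I expect no real difficulty beyond this.
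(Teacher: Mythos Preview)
Your proposal is correct and follows essentially the same approach as the paper: both verify the identity by evaluating on the adapted basis $\psi_i,\psi_j$, using $\psi_i\in\ker\eta_{ji}$ for the first vector and the computation $\eta_{ji}\psi_j=-\tfrac{1}{m_{ij}}\psi_j$ (via $\dif{x}^d_{ij}\dif{x}_{ij}=\tfrac{1}{m_{ij}}$) for the second. The extra care you take about the splitting being well-defined and the identification through $\iden_{ji}$ is reasonable but not emphasised in the paper's version.
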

\begin{proof}
	Due to the splitting, we only need to see that the two sides agree for $\psi_i$ and $\psi_j$.
	Note that
		\[
			\dif{}^\lambda_{ji} \psi_i = ( \iden_{ji} + \lambda \eta_{ji})  \psi_i = \psi_i
		\]
	since $\psi_i \in \ker \eta_{ji}$, so that they agree on $\psi_i$.
	On the other hand, using the fact that
		\[
			\eta_{ji}\psi_j = \psi_j  (\dif{x}^d_{ij} x_j - \dif{x}^d_{ij} x_i) = - \psi_j  \dif{x}^d_{ij} \dif{x}_{ij} = - \frac{1}{m_{ij}}\psi_j,
		\]
	we see
		\[
			\dif{}^\lambda_{ji} \psi_j = (\iden_{ji} + \lambda \eta_{ji})  \psi_j = \psi_j + \lambda \eta_{ji} \psi_j = \psi_j -\frac{\lambda}{m_{ij}} \psi_j = \left(\frac{m_{ij} - \lambda}{m_{ij}} \right)\psi_j,
		\]
	giving us the desired conclusion.
\end{proof}

\begin{figure}
	\centering
	\includegraphics[width=\textwidth]{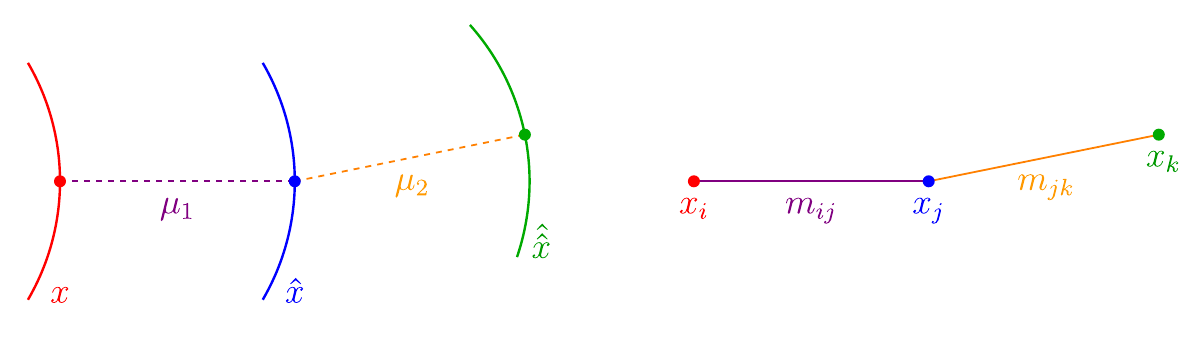}
	\caption{The integrable structure of discrete polarised curves coming from Darboux transformations of smooth polarised curves.}
	\label{fig:integ}
\end{figure}

\subsection{Discrete Darboux transformations via parallel sections}

Now we recall the definition of Darboux transformations of discrete polarised curves given in \cite[Definition 3.1]{cho_discrete_2021-1}, adapted for curves in $\mathbb{R}^4 \cong \mathbb{H}$:
\begin{definition}
	Two discrete polarised curves $x, \hat{x} : (I, \frac{1}{m}) \to \mathbb{H}$ are called a \emph{Darboux pair} with spectral parameter $\mu$ if on every edge $(ij)$, the cross-ratios of the four points $x_i, x_j, \hat{x}_j, \hat{x}_i$, denoted by $\cratio(x_i, x_j, \hat{x}_j, \hat{x}_i)$, satisfy
		\begin{equation}\label{eqn:cr}
			\cratio(x_i, x_j, \hat{x}_j, \hat{x}_i) = (x_i - x_j) (x_j - \hat{x}_j)^{-1} (\hat{x}_j - \hat{x}_i) (\hat{x}_i - x_i)^{-1} = \frac{\mu}{m_{ij}}.
		\end{equation}
\end{definition}

\begin{remark}\label{rema:nondeg}
	Throughout the paper, we assume for non-degeneracy that $\mu \neq m_{ij}$ for any edge $(ij)$.
\end{remark}

\begin{remark}\label{rema:cratio}
	The cross-ratios condition \eqref{eqn:cr} implies that the four points $x_i, x_j, \hat{x}_j, \hat{x}_i$ are \emph{concircular}.
\end{remark}

The cross-ratios condition \eqref{eqn:cr} is equivalent to the discrete Riccati equation:
	\begin{equation}\label{eqn:riccati2}
		\dif{\hat{x}}_{ij}  = \frac{\mu}{m_{ij}} (\hat{x}_j - x_j)(x_i - x_j)^{-1}(\hat{x}_i - x_i) =  \mu (\hat{x}_j - x_j)\dif{x}^d_{ij}(\hat{x}_i - x_i),
	\end{equation}
and defining $T := \hat{x} - x$, we have that
	\begin{equation}\label{eqn:riccati1}
		\dif{T}_{ij} = - \dif{x}_{ij} +  \mu T_j\dif{x}^d_{ij}T_i.
	\end{equation}
As in the smooth case, we show that Darboux transforms of a discrete polarised curve can be obtained via the parallel sections of its associated family of connections.
\begin{theorem}\label{thm:discDar}
	Given a discrete polarised curve $x : (I, \frac{1}{m}) \to \mathbb{H}$, a discrete polarised curve $\hat{x} : (I, \frac{1}{m}) \to \mathbb{H}$ is a Darboux transform of $x$ with spectral parameter if and only if $\hat{L} = \hat{\psi} \mathbb{H} = \begin{psmallmatrix} \hat{x} \\ 1 \end{psmallmatrix} \mathbb{H}$ is $(\dif{}^{\mu})^P$--parallel.
\end{theorem}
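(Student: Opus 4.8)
The plan is to mirror the proof of the smooth Lemma (the one characterising $\mathcal{D}_\mu$--parallel sections) at the level of the linear lift $\mathcal{D}^\mu_{ji}$, and then transport the statement to $\hat{L}$ and to the gauged connection $(\dif{}^\mu)^P$ exactly as in the passage from that Lemma to Theorem~\ref{thm:one}. Concretely, I would first work with the linear map $\mathcal{D}^\mu_{ji}$ on $\underline{\mathbb{H}}^2$ from \eqref{eqn:discretecalD} and show: a section $\phi = \begin{psmallmatrix}\alpha\\\beta\end{psmallmatrix}$ satisfies $\mathcal{D}^\mu_{ji}\phi_i = \phi_j$ (up to a scalar, i.e.\ projectively) if and only if $\hat{x} := x + \alpha\beta^{-1}$ solves the discrete Riccati equation \eqref{eqn:riccati2}. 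Writing out $\mathcal{D}^\mu_{ji}\phi_i = \begin{psmallmatrix}\alpha_i + \dif{x}_{ij}\beta_i\\ \beta_i + \mu\dif{x}^d_{ij}\alpha_i\end{psmallmatrix}$, the condition that this be a quaternionic right multiple of $\begin{psmallmatrix}\alpha_j\\\beta_j\end{psmallmatrix}$ reads $(\alpha_i + \dif{x}_{ij}\beta_i)(\beta_i + \mu\dif{x}^d_{ij}\alpha_i)^{-1} = \alpha_j\beta_j^{-1}$; substituting $\alpha = T\beta$ with $T = \hat{x} - x$ and simplifying should reproduce \eqref{eqn:riccati2}, and conversely given a Riccati solution one defines $\beta$ edge-by-edge (using simple-connectedness of $\domain$, so that the composition of edge maps is path-independent and $\beta$ is globally well-defined) and sets $\alpha := T\beta$.

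Next I would pass to the projective statement. Since $\mathcal{D}^\mu_{ij}\mathcal{D}^\mu_{ji} = (1 - \mu/m_{ij})\iden$ is a nonzero scalar (using Remark~\ref{rema:nondeg}), the induced projective map $(\mathcal{D}^\mu)^P_{ji}$ is a genuine connection, and ``$\hat{L}$ is $(\mathcal{D}^\mu)^P$--parallel'' means precisely $\mathcal{D}^\mu_{ji}\hat{\psi}_i\mathbb{H} = \hat{\psi}_j\mathbb{H}$ on every edge. So the linear computation above, read projectively, already gives: $\hat{x}$ is a Darboux transform of $x$ with parameter $\mu$ iff $\hat{L}$ is $(\mathcal{D}^\mu)^P$--parallel. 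Here one should note the bundle $\hat{L}$ plays the role of the line $\begin{psmallmatrix}\hat{x}\\1\end{psmallmatrix}\mathbb{H} = (e\alpha + \psi\beta)\mathbb{H} = \varphi\mathbb{H}$, matching $\hat{\psi}\beta$ as in the smooth discussion preceding Theorem~\ref{thm:one}.

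Finally I would transfer from $(\mathcal{D}^\mu)^P$ to $(\dif{}^\mu)^P$ via the gauge $\mathcal{G} = (e\ \psi)$: by construction $\dif{}^\lambda_{ji} = (\mathcal{G}\bullet\mathcal{D}^\lambda)_{ji}$, hence $(\dif{}^\lambda)^P_{ji} = (\mathcal{G}^P\bullet(\mathcal{D}^\lambda)^P)_{ji}$, and gauge transformations carry parallel sections to parallel sections: $\phi\mathbb{H}$ is $(\mathcal{D}^\mu)^P$--parallel iff $\mathcal{G}\phi\,\mathbb{H} = (e\alpha + \psi\beta)\mathbb{H} = \hat\psi\mathbb{H} = \hat{L}$ is $(\dif{}^\mu)^P$--parallel. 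Chaining the two equivalences yields the theorem. I expect the main obstacle to be purely computational: verifying that the projective-parallel condition for the lift $\mathcal{D}^\mu_{ji}$ is algebraically equivalent to the cross-ratio/Riccati condition \eqref{eqn:cr}--\eqref{eqn:riccati2}, keeping careful track of the non-commutativity of $\mathbb{H}$ and of which factors act on the left versus the right (the cross-ratio \eqref{eqn:cr} is a specific ordered quaternionic product, so the substitution $\alpha = T\beta$ must be done in the correct order). The well-definedness of the globally constructed $\beta$ in the converse direction is handled by flatness/simple-connectedness, which the paper has already granted for discrete intervals.
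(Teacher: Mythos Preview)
Your proposal is correct and follows essentially the same route as the paper: both arguments pass through the linear lift $\mathcal{D}^\mu$, establish the equivalence between (projective) $\mathcal{D}^\mu$--parallelism of $\phi=\begin{psmallmatrix}\alpha\\\beta\end{psmallmatrix}$ and the discrete Riccati equation for $T=\alpha\beta^{-1}$, and then transport to $(\dif{}^\mu)^P$ via the gauge $\mathcal{G}=(e\ \psi)$. The only cosmetic difference is that the paper chooses an exact linear lift with $\mathcal{D}^\mu_{ji}\phi_i=\phi_j$ (no scalar) and computes $\dif{T}_{ij}$ from the resulting difference equations \eqref{eqn:diffPar}, whereas you read off $T_j=(T_i+\dif{x}_{ij})(1+\mu\,\dif{x}^d_{ij}T_i)^{-1}$ directly from the projective condition; in particular your converse construction of $\beta$ is not strictly needed once you work projectively, since the line $\mathcal{G}^{-1}\hat\psi\,\mathbb{H}=\begin{psmallmatrix}T\\1\end{psmallmatrix}\mathbb{H}$ is already determined by $\hat{x}$.
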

\begin{proof}
	First assume $\hat{L} = \hat{\psi} \mathbb{H} = \begin{psmallmatrix} \hat{x} \\ 1 \end{psmallmatrix} \mathbb{H}$ is $(\dif{}^{\mu})^P$--parallel, that is, on any fixed edge $(ij)$,
		\[
			(\dif{}^{\mu})^P_{ji} \hat{L}_i = \hat{L}_j.
		\]
	Then since we have $\dif{}^{\mu}_{ji} = (\mathcal{G} \bullet \mathcal{D}^{\mu})_{ji}$, there is 
		\[
			\begin{pmatrix} a \\ b \end{pmatrix} = \phi : I \to \mathbb{H}^2
		\]
	such that
		\begin{equation}\label{eqn:parallel1}
			\mathcal{D}^{\mu}_{ji} \phi_i = \phi_j
		\end{equation}
	with $\hat{L} = \mathcal{G}\phi \mathbb{H}$; hence, we have $\hat{x} = x + a b^{-1}$.
	Noting that the condition \eqref{eqn:parallel1} can be restated as
%		\[
%			\begin{pmatrix} a_j \\ b_j \end{pmatrix}
%				= \left(\iden_{ji} +
%					\begin{pmatrix}
%						0 & \dif{x}_{ij} \\
%						\hat{\mu} \dif{x}^d_{ij} & 0
%					\end{pmatrix}\right)
%					\begin{pmatrix} a_i \\ b_i \end{pmatrix}
%				= \begin{pmatrix} a_i \\ b_i \end{pmatrix} +
%					\begin{pmatrix} \dif{x}_{ij} b_i \\ \hat{\mu} \dif{x}^d_{ij} a_i \end{pmatrix},
%		\]
%	so that
		\begin{equation}\label{eqn:diffPar}
			\dif{\begin{pmatrix} a \\ b \end{pmatrix}}_{ij} = -\begin{pmatrix} \dif{x}_{ij} b_i \\ \mu \dif{x}^d_{ij} a_i \end{pmatrix},
		\end{equation}
	we deduce with $T := a b^{-1}$ that
		\begin{align*}
			 - \dif{x}_{ij} +  \mu T_j\dif{x}^d_{ij}T_i
%			 	&= - \dif{x}_{ij} + \mu a_jb^{-1}_j \dif{x}^d_{ij}a_i b^{-1}_i \\
%				&= \dif{a}_{ij} b^{-1}_i - a_jb^{-1}_j \dif{b}_{ij} b^{-1}_i \\
				&= \dif{a}_{ij} b^{-1}_i + a_j\dif{(b^{-1})}_{ij}\\
%				&= a_i b^{-1}_i - a_j b^{-1}_i  + a_j b^{-1}_i - a_j b^{-1}_j \\
				&= a_i b^{-1}_i - a_j b^{-1}_j = \dif{T}_{ij}.
		\end{align*}
	Therefore, $\hat{x}$ is a solution to the discrete Riccati equation \eqref{eqn:riccati2}, and thus a Darboux transform of $x$.
	
	Conversely, assume that $\hat{x}$ solves the Riccati equation \eqref{eqn:riccati2}, that is, there is $T : I \to \mathbb{H}$ satisfying the Riccati equation \eqref{eqn:riccati1} and $\hat{x} = x + T$.
	On any edge $(ij)$, define $b : I \to \mathbb{H}$ recursively via the equation
		\[
			b_j = (1 + T^{-1}_j \dif{\hat{x}}_{ij})b_i,
		\]
	and let $a := T b$.
	Then we have
		\begin{align*}
			- \mu \dif{x}^d_{ij} a_i &= - \mu \dif{x}^d_{ij} T_i b_i = - \mu T^{-1}_j T_j \dif{x}^d_{ij} T_i b_i \\ 
				&= - T^{-1}_j \dif{\hat{x}}_{ij} b_i = \dif{b}_{ij},
		\end{align*}
	while
		\begin{align*}
			- \dif{x}_{ij} b_i &= (\dif{T}_{ij} - \mu T_j \dif{x}^d_{ij} T_i) b_i = a_i - T_j b_i - \mu T_j \dif{x}^d_{ij} a_i \\
				&= a_i - T_j b_i + T_j \dif{b}_{ij} = a_i - T_j b_i + T_j b_i - T_j b_j = \dif{a}_{ij}.
		\end{align*}
	Therefore, $\phi := \begin{pmatrix} a \\ b \end{pmatrix}$ satisfies \eqref{eqn:diffPar}, i.e., $\mathcal{D}^\mu_{ji} \phi_i = \phi_j$.
	But since we have that $\hat{L} = \hat{\psi} \mathbb{H} = \mathcal{G}\phi \mathbb{H}$, 
		\[
			\dif{}^{\mu}_{ji}\hat{\psi}_i \mathbb{H} = \mathcal{G}_j \mathcal{D}^\mu_{ji} \mathcal{G}^{-1}_i \mathcal{G}_i \phi_i \mathbb{H} = \mathcal{G}_j \mathcal{D}^\mu_{ji} \phi_i \mathbb{H} = \mathcal{G}_j \phi_j \mathbb{H} = \hat{\psi}_j \mathbb{H},
		\]
	telling us that $\hat{L}$ is $(\dif{}^{\mu})^P$--parallel.
\end{proof}
Similar to the smooth case, the choice of initial condition completely determines whether the Darboux transform $\hat{x}$ of a discrete curve in some $3$-sphere $S^3$ again takes values in the same $S^3$:
\begin{lemma}\label{lemm:r3disc}
	Given a discrete polarised curve $x : (I, \frac{1}{m}) \to S^3$ taking values in some $3$-sphere $S^3$ and $(\dif{}^{\mu})^P$--parallel $\hat{L} = \hat{\psi}\mathbb{H} = \begin{psmallmatrix} \hat{x} \\ 1 \end{psmallmatrix}\mathbb{H}$, the Darboux transform $\hat{x}$ takes values in $S^3$ if and only if $\hat{x}_i \in S^3$ for some $i \in I$.
\end{lemma}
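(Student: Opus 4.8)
The plan is to mirror the argument of Lemma~\ref{lemm:r3condition} in the discrete setting, replacing the differential invariance of the Hermitian form with a discrete (edge-by-edge) invariance. As in the smooth case, necessity is trivial, so I focus on sufficiency. After applying a suitable stereographic projection, it suffices to treat the case $S^3 = \Im\mathbb{H} \cup \{\infty\} \cong \mathbb{R}^3 \cup \{\infty\}$, i.e.\ curves into $\Im\mathbb{H}$. I would reuse the Hermitian form
	\[
		\left( \begin{pmatrix} a\\ b \end{pmatrix}, \begin{pmatrix} c\\ d \end{pmatrix}\right) = \bar a d + \bar b c
	\]
on $\underline{\mathbb{H}}^2$, together with the two facts already recorded in the proof of Lemma~\ref{lemm:r3condition}: that it is quaternionic sesquilinear, and that $\left(\begin{psmallmatrix}\alpha\\1\end{psmallmatrix},\begin{psmallmatrix}\alpha\\1\end{psmallmatrix}\right)=0$ exactly when $\Re\alpha=0$. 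Since $x$ takes values in $\Im\mathbb{H}$, we have $(\psi_i,\psi_i)=0$ for every $i\in I$, hence $(L_i,L_i)=0$ along the curve.

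The heart of the argument is to show that the value $(\varphi,\varphi)$ (for a $(\dif{}^{\mu})^P$--parallel lift $\varphi$ of $\hat{L}$, say $\varphi = \mathcal{G}\phi$ with $\phi$ as in Theorem~\ref{thm:discDar}) is, up to a positive real scalar on each edge, an \emph{edge-invariant}. Concretely: on an oriented edge $(ij)$ we have $\varphi_j = \dif{}^{\mu}_{ji}\varphi_i = (\iden_{ji} + \mu\eta_{ji})\varphi_i$, and using $\im\eta_{ji} = L_j = \hat\varphi_j\mathbb{H}$ together with $\ker\eta_{ji}=L_i$, I would compute $(\varphi_j,\varphi_j)$ in terms of $(\varphi_i,\varphi_i)$ and the cross-terms $(\eta_{ji}\varphi_i,\varphi_i)$, $(\varphi_i,\eta_{ji}\varphi_i)$, $(\eta_{ji}\varphi_i,\eta_{ji}\varphi_i)$. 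The key point is that $\eta_{ji}\varphi_i \in L_j$, so by sesquilinearity these cross-terms are quaternionic multiples of $(\psi_j,\varphi_i)$ and of $(\psi_j,\psi_j)=0$; writing things out in the affine chart one finds $(\varphi_j,\varphi_j) = \rho_{ij}\,(\varphi_i,\varphi_i)$ for a strictly positive real $\rho_{ij}$ — this is the discrete analogue of ``$\dif{}((\varphi,\varphi))=0$'' from the smooth proof, and it is exactly this normalisation that was the reason for passing to the \emph{projective} connection in Definition~\ref{def:discFlat}. (Alternatively, and perhaps more cleanly, I would first choose the square-root normalisation of $\mathcal{D}^\mu$ mentioned in the Remark after Definition~\ref{def:discFlat}, so that the resulting connection on $\underline{\mathbb{H}}^2$ preserves the Hermitian form exactly; then $(\varphi,\varphi)$ is literally constant along $I$ for the normalised parallel lift.) Either way, $(\varphi_i,\varphi_i)$ vanishes at one vertex iff it vanishes at all of them.

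To finish: write $\varphi_i = e\,a_i + \psi_i\,b_i$ so that $\hat{x}_i = x_i + a_i b_i^{-1}$ and, exactly as in the smooth computation,
	\[
		(\varphi_i,\varphi_i) = \bar a_i b_i + \bar b_i a_i = 2\Re(a_i\bar b_i) = 2\, b_i\bar b_i\,\Re(a_i b_i^{-1}).
	\]
Thus if $\hat{x}_{i_0}\in S^3 = \Im\mathbb{H}\cup\{\infty\}$ for some $i_0\in I$, then $\Re(a_{i_0}b_{i_0}^{-1})=0$, so $(\varphi_{i_0},\varphi_{i_0})=0$, hence $(\varphi_i,\varphi_i)\equiv 0$ on $I$ by the edge-invariance above, hence $\Re(a_i b_i^{-1})=0$ and $\hat{x}_i\in\Im\mathbb{H}$ for every $i$ (the case $b_i=0$, i.e.\ $\hat{x}_i=\infty$, being included in $S^3$). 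Undoing the stereographic projection gives $\hat{x}: (I,\tfrac1m)\to S^3$. The only real obstacle I anticipate is the bookkeeping in the edge-invariance step — checking that the cross-terms collapse and that the scalar $\rho_{ij}$ is genuinely positive real (not merely nonzero) — which is why the square-root-normalised connection is the safer route; everything else is a transcription of Lemma~\ref{lemm:r3condition}.
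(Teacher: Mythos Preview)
Your approach is essentially the paper's: reduce to $\Im\mathbb{H}$ by stereographic projection, expand $(\varphi_j,\varphi_j)=(\dif{}^\mu_{ji}\varphi_i,\dif{}^\mu_{ji}\varphi_i)$ using $\eta_{ji}\varphi_i\in L_j$ and $(\psi_j,\psi_j)=0$, and propagate the vanishing of $(\varphi,\varphi)$ along edges. The one correction is that the edge scalar comes out as $\rho_{ij}=1-\tfrac{\mu}{m_{ij}}$, which is real and \emph{nonzero} by the non-degeneracy assumption of Remark~\ref{rema:nondeg} but not in general positive; since nonzero is all the vanishing argument needs, your worry about positivity --- and the detour through the square-root normalisation --- is unnecessary.
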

\begin{proof}
	As in the smooth case, we will apply a suitable stereographic projection and prove the statement for curves in $\Im \mathbb{H} \cong \mathbb{R}^3$.
	Letting $\phi = \begin{psmallmatrix} a \\ b \end{psmallmatrix}$ so that $\hat{L} = \mathcal{G}\phi\mathbb{H}$ with $\mathcal{D}^\mu_{ji} \phi_i = \phi_j$, define $\varphi := \mathcal{G}\phi = ea + \psi b$ so that
		\[
			\dif{}^\mu_{ji} \varphi_i = \varphi_j.
		\]
	Then we can directly verify that
		\[
			\eta_{ji} \varphi_i = \psi_j \dif{x}_{ij}^d a_i,
		\]
	while using the hermitian form \eqref{eqn:hermform},
		\[
			(\psi_j, \varphi_i) = (\psi_j, ea_i + \psi_i b_i) = a_i + \dif{x}_{ij} b_i.
		\]
	Thus, on any edge $(ij)$, using that $(\psi_i, \psi_i) = 0$, we have
		\begin{align*}
			(\varphi_j, \varphi_j)
				&= (\dif{}^\mu_{ji} \varphi_i, \dif{}^\mu_{ji} \varphi_i)  \\
%				= (\varphi_i + \hat{\mu}\eta_{ji} \varphi_i, \varphi_i + \hat{\mu}\eta_{ji} \varphi_i)
				&= (\varphi_i, \varphi_i) + \mu\left(\overline{\dif{x}_{ij}^d a_i}( \psi_j , \varphi_i) + (\varphi_i,  \psi_j )\dif{x}_{ij}^d a_i\right) \\
%				&= (\varphi_i, \varphi_i) + \hat{\mu}\left(-\overline{a}_i\dif{x}_{ij}^d (a_i + \dif{x}_{ij} b_i) + (\overline{a}_i -  \overline{b}_i\dif{x}_{ij} )\dif{x}_{ij}^d a_i\right) \\
				&= (\varphi_i, \varphi_i) - \frac{\mu}{m_{ij}}\left(\overline{a}_i  b_i  +  \overline{b}_i a_i\right) 
				= \left(1 -  \frac{\mu}{m_{ij}} \right) (\varphi_i, \varphi_i).
		\end{align*}
	Thus the non-degeneracy condition iterated in Remark~\ref{rema:nondeg} tells us that $(\varphi, \varphi) \equiv 0$ on $I$ if and only if it vanishes on one vertex $i \in I$.
	Finally, the relation $(\hat{\psi}, \hat{\psi}) = \overline{b}^{-1} (\varphi, \varphi) b^{-1}$ allows us to obtain the desired conclusion.
\end{proof}

In fact, we also obtain the discrete counterpart of Corollary~\ref{cor:2sphere} on curves in the $2$-sphere, where the proof is verbatim:
\begin{corollary}\label{cor:discs2}
	Let $x : (I, \frac{1}{m}) \to S^2$ be a curve into some $2$-sphere $S^2$. Then the Darboux transform $\hat{x}$ takes values in the same $2$-sphere if and only if $\hat{x}_i \in S^2$ for some $i \in I$.
\end{corollary}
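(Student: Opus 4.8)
The plan is to mimic verbatim the argument used for Corollary~\ref{cor:2sphere} in the smooth setting, now invoking the discrete Lemma~\ref{lemm:r3disc} in place of its smooth analogue Lemma~\ref{lemm:r3condition}. The necessity is trivial, so I would only argue sufficiency. Throughout, $\hat{x}$ is a Darboux transform of $x$ with spectral parameter $\mu$, so by Theorem~\ref{thm:discDar} it is represented by the $(\dif{}^\mu)^P$--parallel line bundle $\hat{L} = \hat{\psi}\mathbb{H}$.

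First I would realise the given $2$-sphere $S^2$ as the common intersection of the elliptic sphere pencil of $3$-spheres through it, that is, the $1$-parameter family $\{S^3_t\}$ of $3$-spheres with $\bigcap_t S^3_t = S^2$ (see \cite[Definition 1.2.3]{hertrich-jeromin_introduction_2003} for example). Since $x$ takes values in $S^2$, for each $t$ the curve $x$ is a discrete polarised curve into $S^3_t$; crucially, the associated family of connections $\dif{}^\lambda$ depends only on $x$ and its polarisation $\tfrac1m$, and not on which ambient sphere we regard $x$ as lying in, so the same $(\dif{}^\mu)^P$--parallel bundle $\hat{L}$ represents $\hat{x}$ in every $S^3_t$.

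Then, assuming $\hat{x}_i \in S^2$ for some vertex $i \in I$, I observe that $\hat{x}_i$ lies in every member $S^3_t$ of the pencil. Applying Lemma~\ref{lemm:r3disc} to each fixed $S^3_t$ — with the single-vertex hypothesis satisfied precisely at $i$ — yields that the whole Darboux transform $\hat{x}$ takes values in $S^3_t$. Letting $t$ range over the pencil forces $\hat{x}$ into $\bigcap_t S^3_t = S^2$, which is the desired conclusion.

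The only point requiring a moment's care — and hence the expected obstacle, though a mild one — is the simultaneous applicability of Lemma~\ref{lemm:r3disc} to every $S^3_t$; this reduces to the two observations above, namely that $x$ lands in each $S^3_t$ and that the relevant discrete connection and its parallel sections are intrinsic to the pair $(x,\tfrac1m)$. Once these are noted, the remainder of the argument is identical to that of Corollary~\ref{cor:2sphere}.
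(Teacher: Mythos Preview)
Your proposal is correct and is precisely the approach the paper takes: it states that the proof is verbatim that of Corollary~\ref{cor:2sphere}, i.e., realise $S^2$ as the intersection of an elliptic pencil of $3$-spheres and apply Lemma~\ref{lemm:r3disc} to each member. Your extra remark that $\dif{}^\lambda$ depends only on $(x,\tfrac{1}{m})$ and not on the ambient sphere is exactly the point that makes the verbatim transfer legitimate.
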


\subsection{Discrete monodromy}\label{subsect:discMon}
The discrete monodromy was investigated in \cite{matthaus_discrete_2003} (see also \cite{pinkall_new_2007}) for the case of Darboux transformations of planar curves with normalised polarisation so that $m_{ij} \equiv 1$.
In this section, we obtain a generalisation of this result; in fact, the result is immediate due to the parallel sections formulation of Darboux transformations.

As in the smooth case, we call a $(\dif{}^{\mu})^P$--parallel section $\hat{L}$ a \emph{global parallel section} if $\hat{L}_n = \hat{L}_{n + M}$ for all $n \in I$ for some fixed $M \in \mathbb{Z}$, that is, $\phi = \begin{psmallmatrix} a \\ b \end{psmallmatrix}$ is a \emph{section with a multiplier}.
 
Supposing that $\hat{L}$ is a Darboux transform of $L$, that is, $\hat{L}$ is $(\dif{}^{\mu})^P$--parallel, we have that
%	\[
%		\hat{L}_{n + M} = \left(\prod_{\kappa = n}^{M - 1}r^P(\hat{\mu})_{(\kappa, \kappa + 1)}\right) \hat{L}_{n},
%	\]
%or equivalently,
	\[
		\hat{L}_{n + M} = \hat{\psi}_{n + M}\mathbb{H} = \left(\prod_{\kappa = n}^{n+M - 1}\dif{}^{\mu}_{( \kappa , \kappa+1)}\right) \hat{\psi}_{n}\mathbb{H} = \hat{L}_{n}.
	\]
Therefore, denoting the \emph{monodromy matrix} as 
	\[
		\mathcal{M}_{r, \mu} :=  \prod_{\kappa = n}^{n+M - 1}\dif{}^\mu_{(\kappa,\kappa+1)},
	\]
we see that finding sections with multipliers amounts to finding the eigenvectors of $\mathcal{M}_{r, \mu}$ (see, for example, Figures~\ref{fig:DTdisc1} and \ref{fig:DTdisc2}).

\begin{figure}
	\centering
	\begin{minipage}{0.45\textwidth}
		\includegraphics[width=\linewidth]{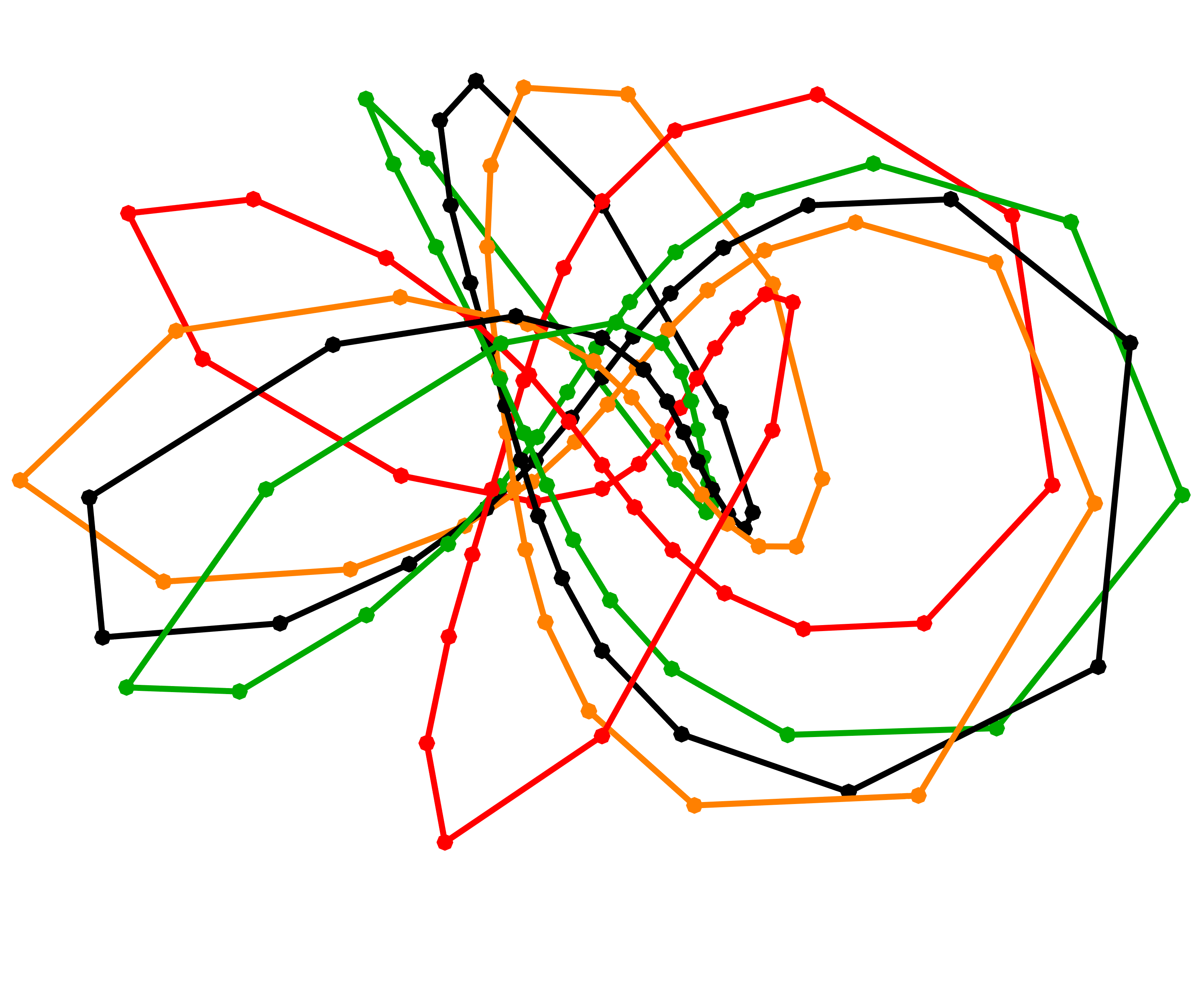}
	\end{minipage}
	\begin{minipage}{0.45\textwidth}
		\includegraphics[width=\linewidth]{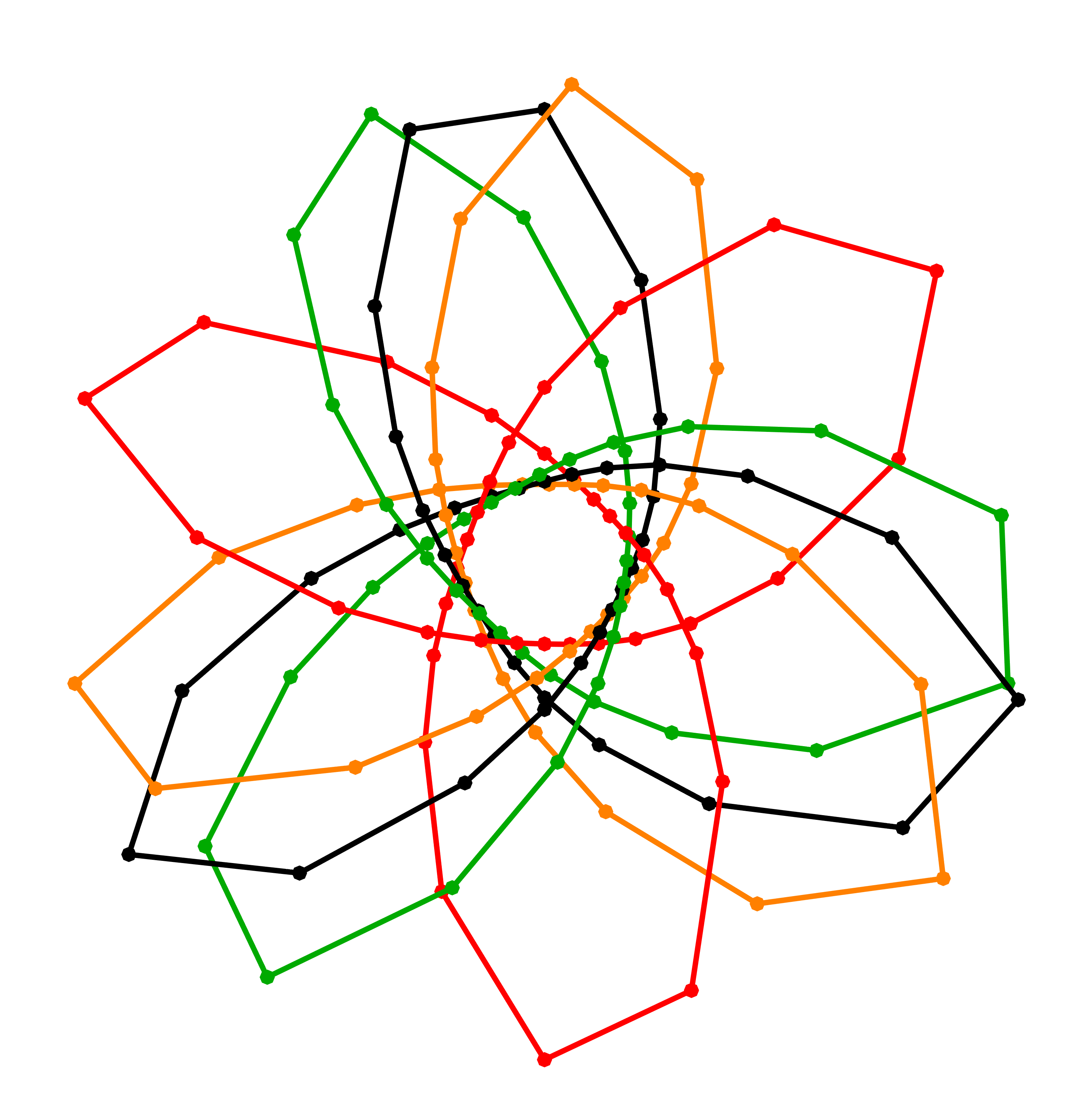}
	\end{minipage}
	\caption{Original $(2,3)$-discrete torus knot (in black), and its closed Darboux transforms (on the left), also viewed from the top (on the right).}
	\label{fig:DTdisc1}
\end{figure}

\begin{figure}
	\centering
	\begin{minipage}{0.45\textwidth}
		\includegraphics[width=\linewidth]{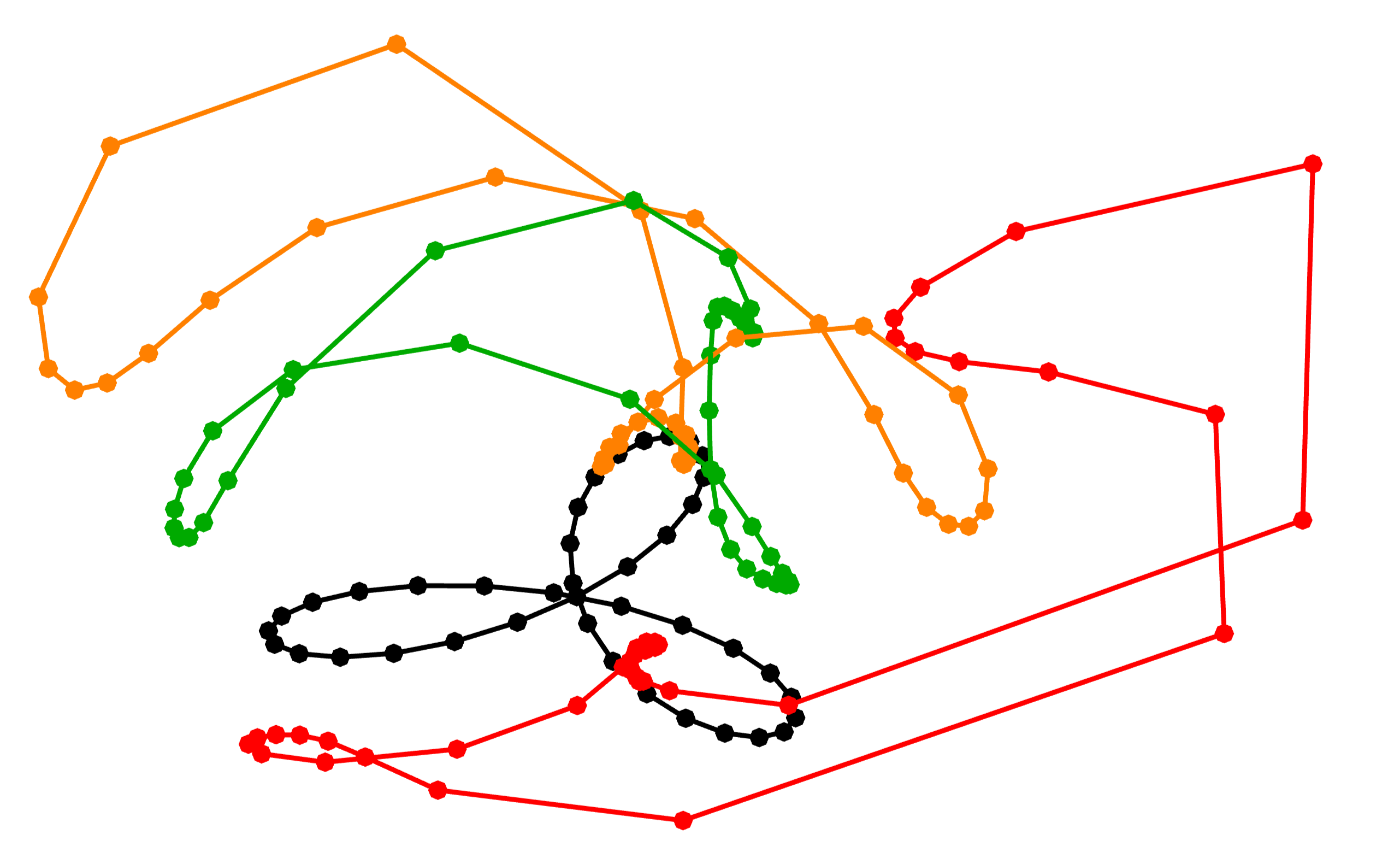}
	\end{minipage}
	\caption{Original planar curve given by non-uniform samplings of $x(t) = \jj \cos(3t) e^{i t}$ (in black), and its closed Darboux transforms.}
	\label{fig:DTdisc2}
\end{figure}

We illustrate this with the next example:
\begin{example}\label{exam:disc}
	In this example, we offer an explicit discrete parametrisation of all planar Darboux transforms of the discrete polarised circle.
	Suppose $x: (I, \frac{1}{m}) \to \mathbb{H}$ given by $x_n = \jj e^{\frac{2 \pi \ii}{M} n}$ for some $M \in \mathbb{N}$ is polarised by $m_{ij} = | 1 - e^\frac{2\pi \ii}{M}|^{-2}$ for any edge $(ij)$.
	To calculate the Darboux transforms, we will find $\phi = \begin{psmallmatrix} a \\ b \end{psmallmatrix}$ satisfying the difference relations \eqref{eqn:diffPar}.
	
	To do this, first we eliminate the $b$ from the pair of difference relations by noting that on any three consecutive vertices $(ijk)$, we have
		\begin{equation}\label{eqn:bdiff}
			\dif{b}_{ij} = b_i - b_j = -(\dif{x}_{ij})^{-1}\dif{a}_{ij} + (\dif{x}_{jk})^{-1}\dif{a}_{jk}
		\end{equation}
	so that we obtain a second order linear recurrence relation on $a$:
		\begin{equation}\label{eqn:diffPar2}
			\mu\dif{x}^d_{ij} a_i  -(\dif{x}_{ij})^{-1}\dif{a}_{ij} + (\dif{x}_{jk})^{-1}\dif{a}_{jk} = 0.
		\end{equation}
	Conversely, every solution $a$ to the recurrence relation \eqref{eqn:diffPar2}, and $b$ defined via \eqref{eqn:bdiff} gives a Darboux transform of $x$.
	
	In the specific case of discrete circles, the recurrence relation \eqref{eqn:diffPar2} reads
		\[
			e^\frac{2\pi \ii}{M} a_k - (1 + e^\frac{2\pi \ii}{M})a_j + (1 - \hat{\mu}| 1 - e^\frac{2\pi \ii}{M}|^2) a_i = 0.
		\]
	Writing $a = a_0 + \jj a_1$ for some complex valued discrete functions $a_0$ and $a_1$ on $I$, we then have
		\[
			a = (c_0^- a_0^- + c_0^+ a_0^+) + \jj (c_1^- a_1^- + c_1^+ a_1^+)
		\]
	for some constants of integration $c_0^\pm, c_1^\pm \in \mathbb{C}$ where
		\begin{align*}
			a_{0,n}^\pm &= \left(\frac{1}{2}\left(e^{-\frac{2\pi \ii}{M}} (1 \mp s) + (1 \pm s)\right)\right)^n \\
			a_{1,n}^\pm &= \left(\frac{1}{2}\left(e^\frac{2\pi \ii}{M} (1 \pm s) + (1 \mp s)\right)\right)^n,
		\end{align*}
	where $s = \sqrt{1 - 4\mu}$ as before.
	Then using $b_i = -(\dif{x}_{ij})^{-1} \dif{a}_{ij}$, we find that $b =: b_0 + \jj b_1$ for
		\begin{align*}
			b_{0,n} &= -\frac{1}{2} e^{-\frac{2\pi \ii}{M}n}\left(c_1^- (1- s) a_{1,n}^- + c_1^+ (1 + s) a_{1,n}^+\right) \\
			b_{1,n} &= \frac{1}{2} e^{\frac{2\pi \ii}{M}n}\left(c_0^- (1+ s) a_{0,n}^- + c_0^+ (1 - s) a_{0,n}^+\right).
		\end{align*}
	Therefore, as in the smooth case, Lemma~\ref{lemm:r3disc} ensures that the Darboux transform takes values in $\Im \mathbb{H} \cong \mathbb{R}^3$ by choosing the constants so that
		\[
			\Re (c_0^- \overline{c_1^-} - c_0^+ \overline{c_1^+}) = 0,
		\]
	while Corollary~\ref{cor:discs2} implies that the Darboux transform takes values in the $\jj \kk$--plane by choosing the constants so that
		\[
			c_0^- \overline{c_1^-} - c_0^+ \overline{c_1^+} = 0.
		\]
	For example, letting $c_0^\pm = 0$ so that $a_0 \equiv 0 \equiv b_1$, we have that
		\[
			\hat{x}_n = \jj\scalebox{0.85}{$\left(\dfrac{-e^{\frac{2 \pi \ii}{M} n}\left(c_1^+ (1 - s)(e^{\frac{2 \pi \ii}{M}}(1 + s) + (1 - s))^n + c_1^- (1+s)(e^{\frac{2 \pi \ii}{M}}(1 - s) + (1 + s))^n \right)}
				{ c_1^+ (1 + s)(e^{\frac{2 \pi \ii}{M}}(1 + s) + (1 - s))^n + c_1^- (1-s)(e^{\frac{2 \pi \ii}{M}}(1 - s) + (1 + s))^n }\right)$}.
		\]

	To consider the monodromy, note that by  the difference relations \eqref{eqn:diffPar} on $a$ and $b$, we see that $\varphi$ is periodic if and only if $a$ is.
	Since the period of the discrete circle is $M$, we calculate that $(a_*^\pm)_{n+M} = (a_*^\pm)_n h^\pm$ for $* = 0, 1$ where
		\[
			h^\pm = (a_0^\pm)_M = (a_1^\pm)_M.
		\]
	Therefore, the resonance points occur when $h^+ = h^-$, that is,
		\[
			\left(\frac{e^\frac{2\pi \ii}{M} (1 + s) + (1 - s)}{e^\frac{2\pi \ii}{M} (1 - s) + (1 + s)}\right)^M = e^{2\pi \ii k}
		\]
	for some $k \in \mathbb{Z}$.
	Hence, we have resonance points when
		\[
			\mu =\frac{1}{4}\left(1 - \cot^2\frac{\pi}{M}\tan^2\frac{k \pi}{M}\right).
		\]
	for $k \in \mathbb{Z}$.
	For spatial and planar examples of closed Darboux transforms of the discrete circle, see Figures~\ref{fig:spatDar} and \ref{fig:planDar}, respectively.
	\begin{remark}
		We remark here that similar to the smooth case, Corollary~\ref{cor:discs2} implies every Darboux transform of the discrete circle must be contained in some $2$-sphere, determined by the circle and an initial point of the Darboux transform.
	\end{remark}
	\begin{figure}
		\centering
		\begin{minipage}{0.48\textwidth}
			\includegraphics[width=\linewidth]{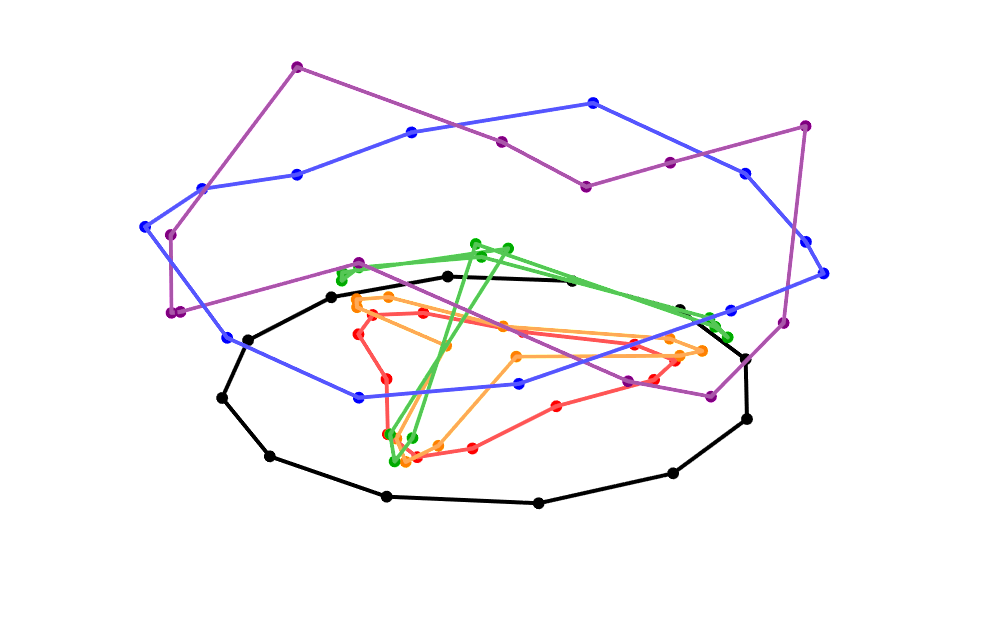}
		\end{minipage}
		\begin{minipage}{0.48\textwidth}
			\includegraphics[width=\linewidth]{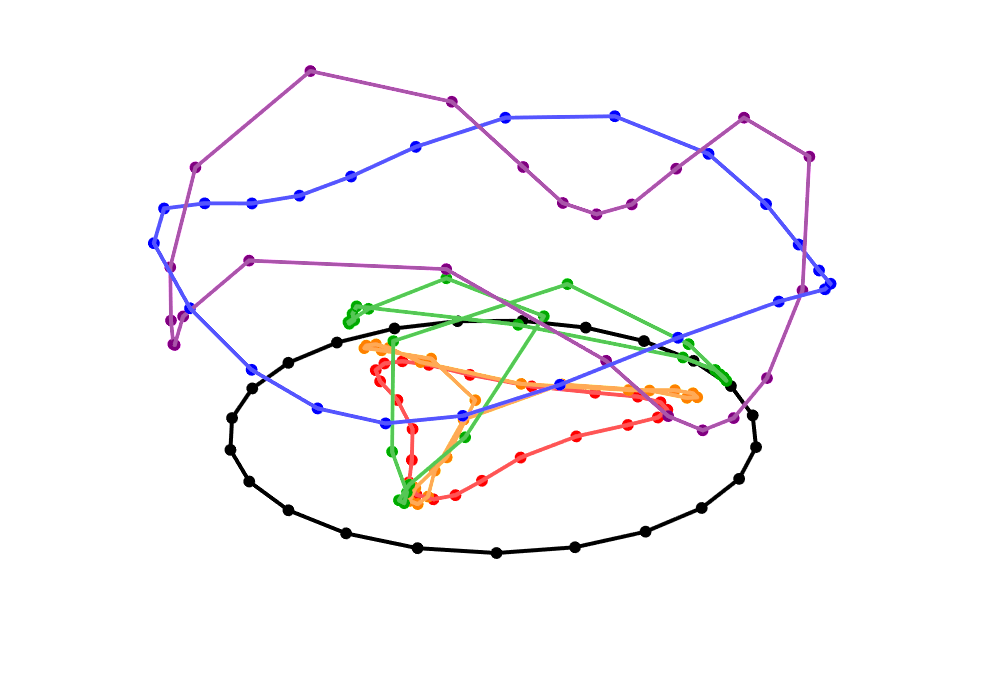}
		\end{minipage}
		\\
		\begin{minipage}{0.48\textwidth}
			\includegraphics[width=\linewidth]{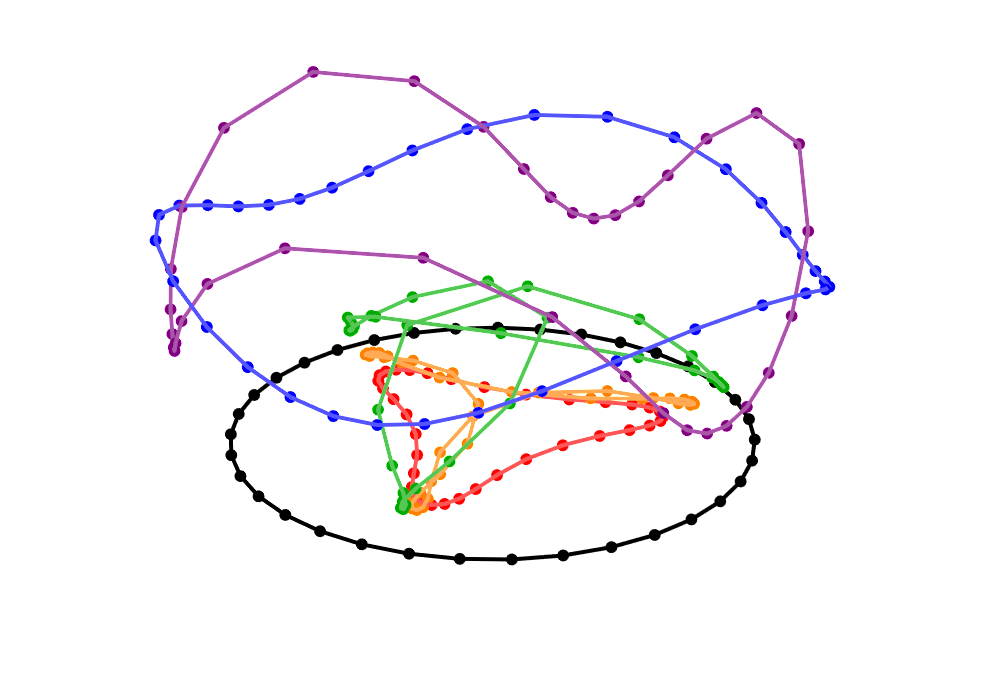}
		\end{minipage}
		\begin{minipage}{0.48\textwidth}
			\includegraphics[width=\linewidth]{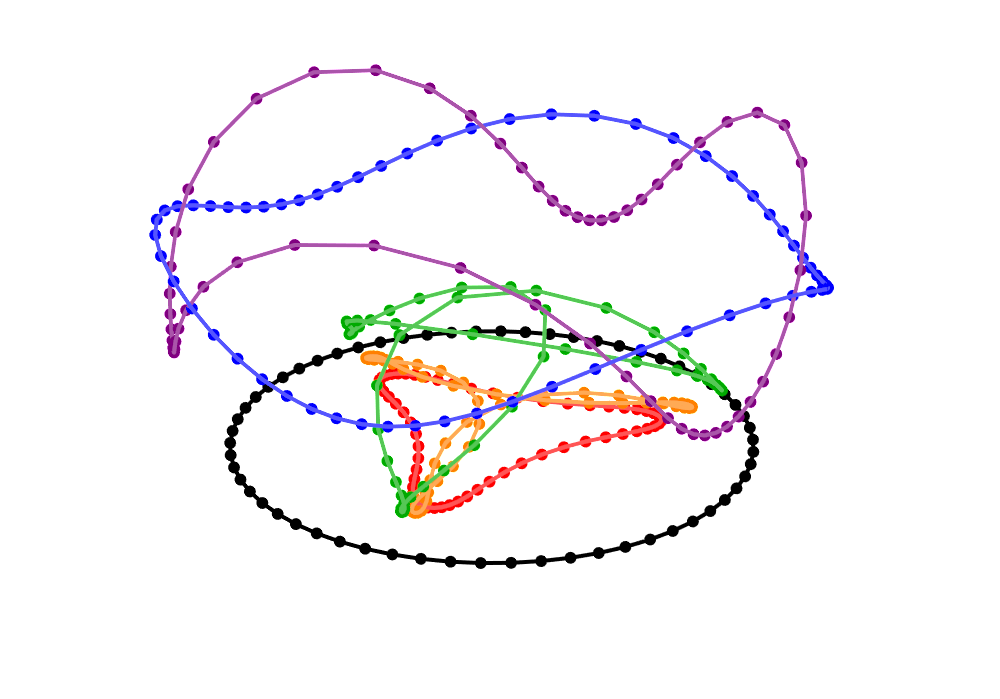}
		\end{minipage}
		\caption{Spatial Darboux transforms of the discrete circle (drawn in black) at the resonance point with $k = 3$. The figures are drawn with the same constants as in the smooth spatial curves of Figure~\ref{fig:DTsmooth} ($c_0^+ = 0.5\ii, c_0^- = 0, c_1^+ = 1, c_1^- = -4, -2, -1.2, -0.1, 0.25$), but with varying number of points on the initial circle ($M = 12$, $23$, $35$, $60$).}
		\label{fig:spatDar}
	\end{figure}
	\begin{figure}
		\centering
		\begin{minipage}{0.35\textwidth}
			\includegraphics[width=\linewidth]{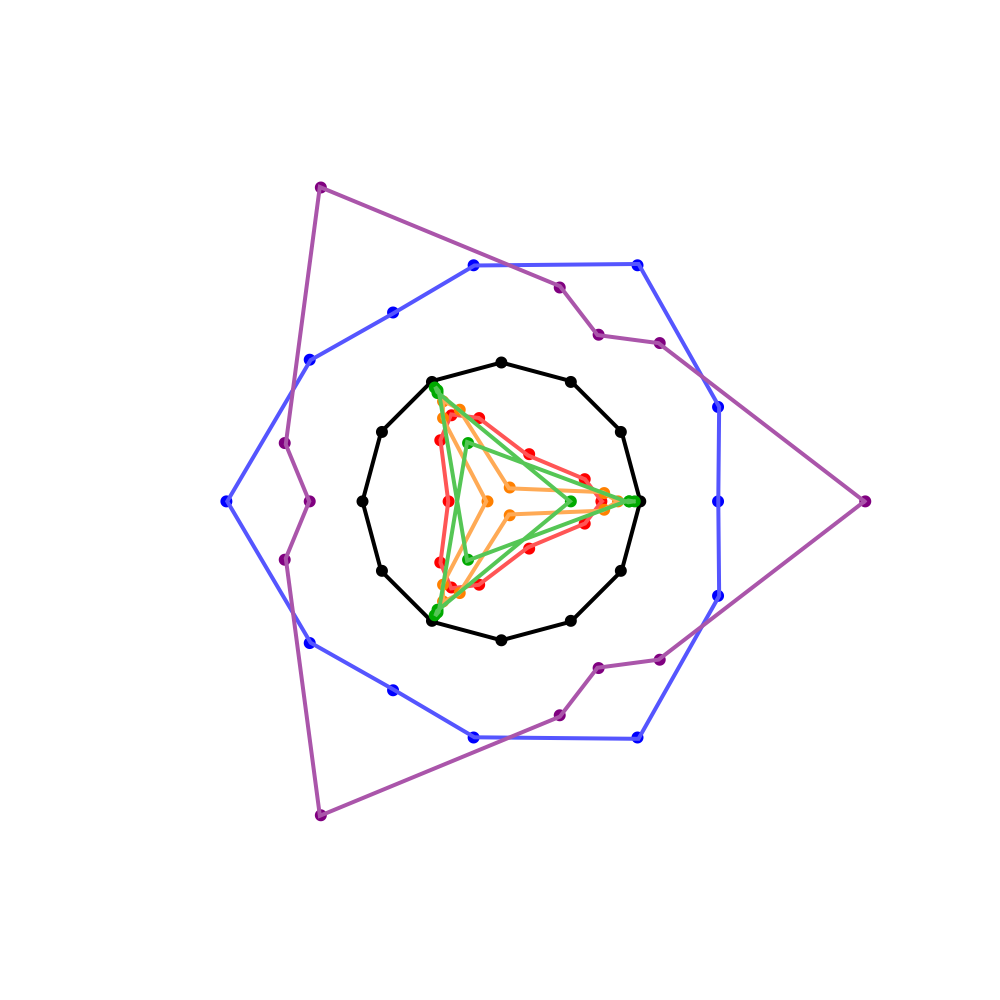}
		\end{minipage}
		\begin{minipage}{0.35\textwidth}
			\includegraphics[width=\linewidth]{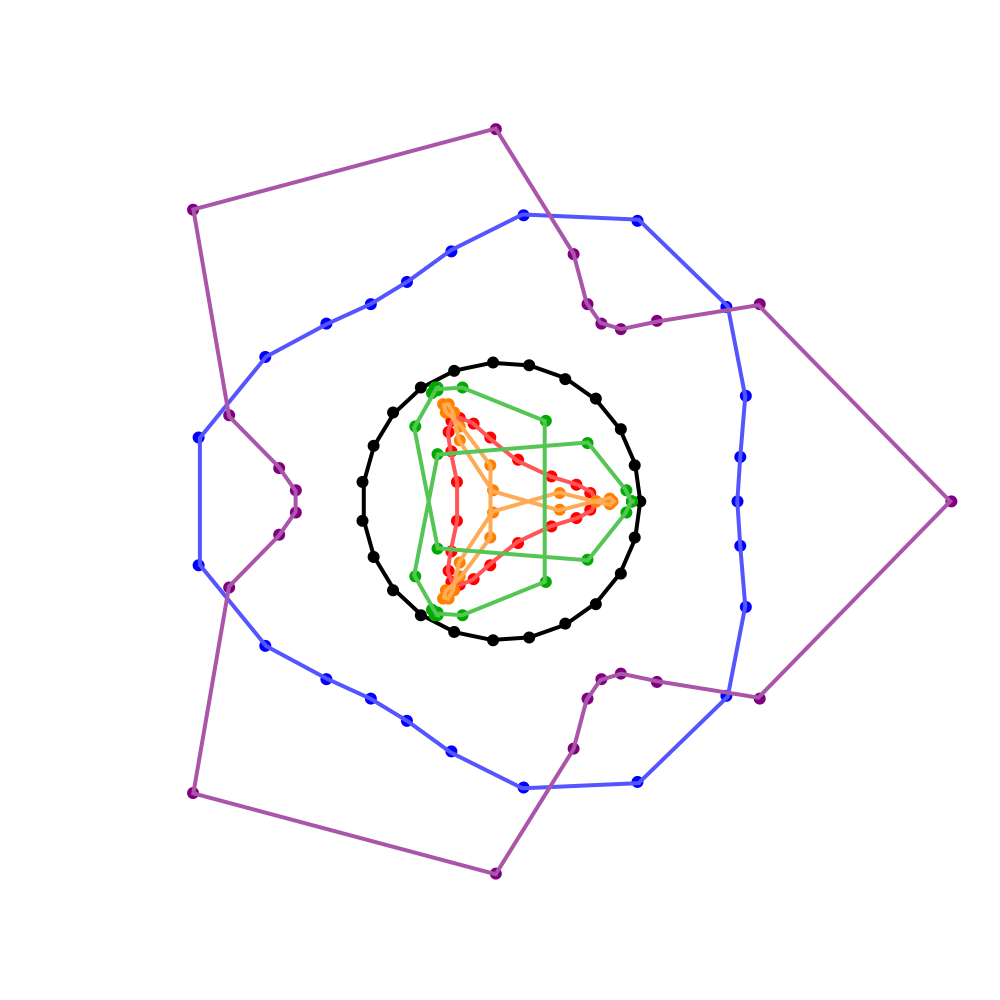}
		\end{minipage}
		\\
		\begin{minipage}{0.35\textwidth}
			\includegraphics[width=\linewidth]{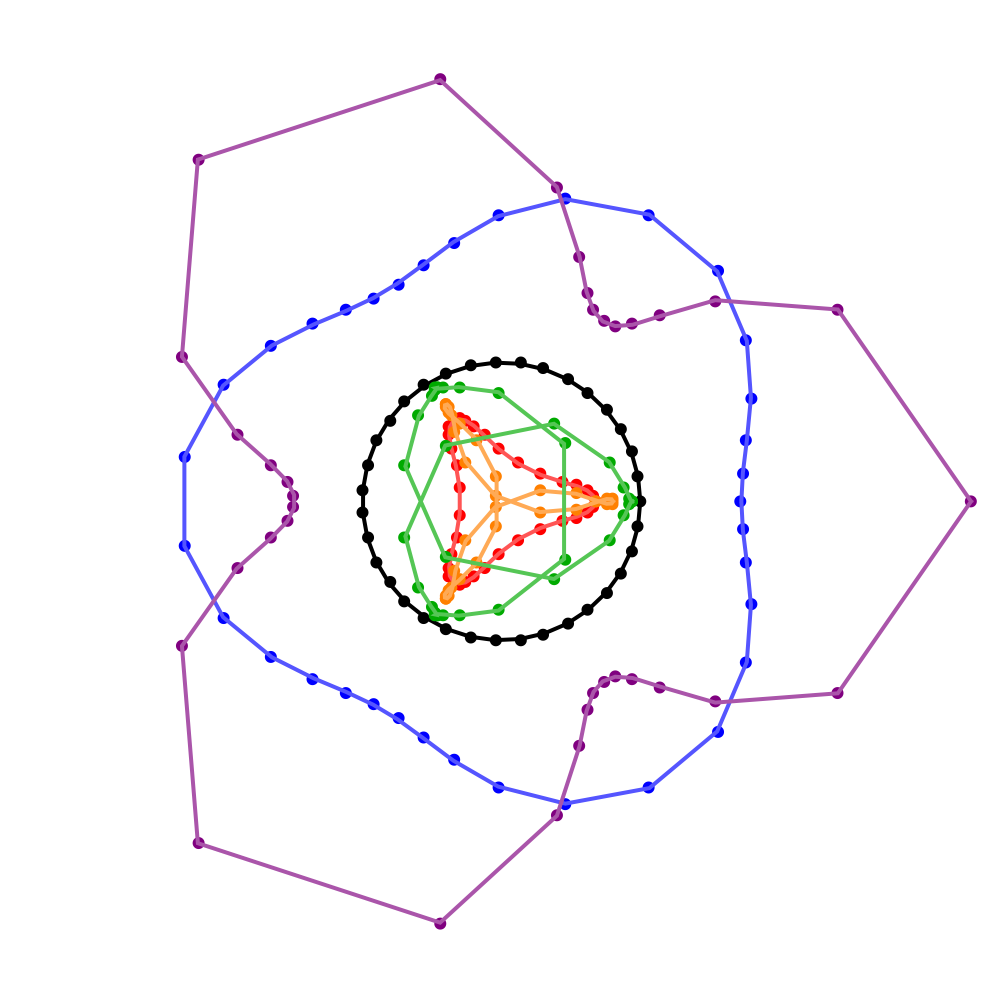}
		\end{minipage}
		\begin{minipage}{0.35\textwidth}
			\includegraphics[width=\linewidth]{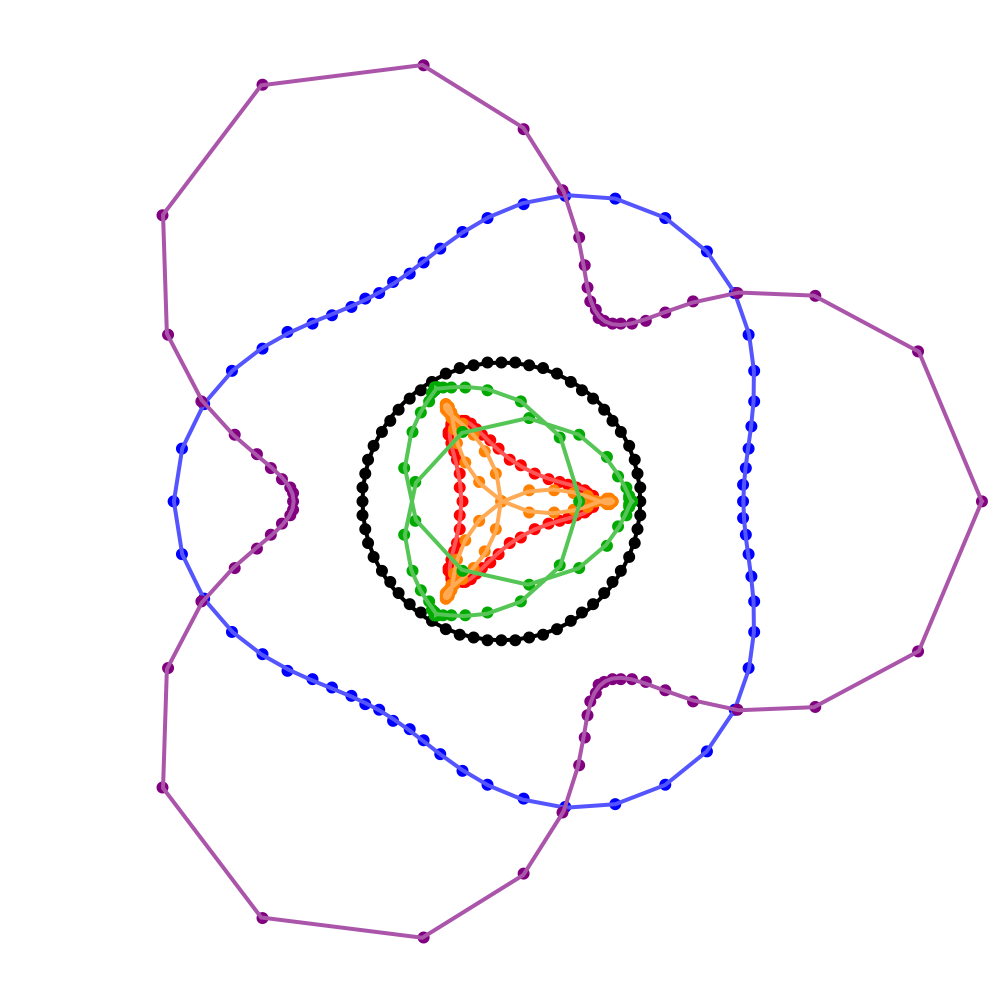}
		\end{minipage}
		\caption{Planar Darboux transforms of the discrete circle (drawn in black) at the resonance point with $k = 3$. The figures are drawn with the same constants as in the smooth planar curves of Figure~\ref{fig:DTsmooth} ($c_0^+ = 0, c_0^- = 0, c_1^+ = 1, c_1^- = -4, -2, -1.2, -0.1, 0.25$), but with varying number of points on the initial circle ($M = 12$, $23$, $35$, $60$).}
		\label{fig:planDar}
	\end{figure}	
\end{example}

\subsection{Discrete bicycle correspondence and bicycle monodromy}
Analogous to the smooth case, the discrete case also allows for an integrable reduction to the case when both of the curves forming the Darboux pair are arc-length polarised.
To see this, we first recall the definition of a discrete arc-length polarisation \cite{cho_discrete_2021-1}.
\begin{definition}
	A discrete polarised curve $x : (I, \frac{1}{m}) \to \mathbb{H}$ is \emph{arc-length polarised} if on any edge $(ij)$,
		\[
			|{\dif{x}_{ij}}|^2 = \frac{1}{m_{ij}}.
		\]
\end{definition}
\begin{figure}
		\centering
		\begin{minipage}{0.47\textwidth}
			\includegraphics[width=0.9\linewidth]{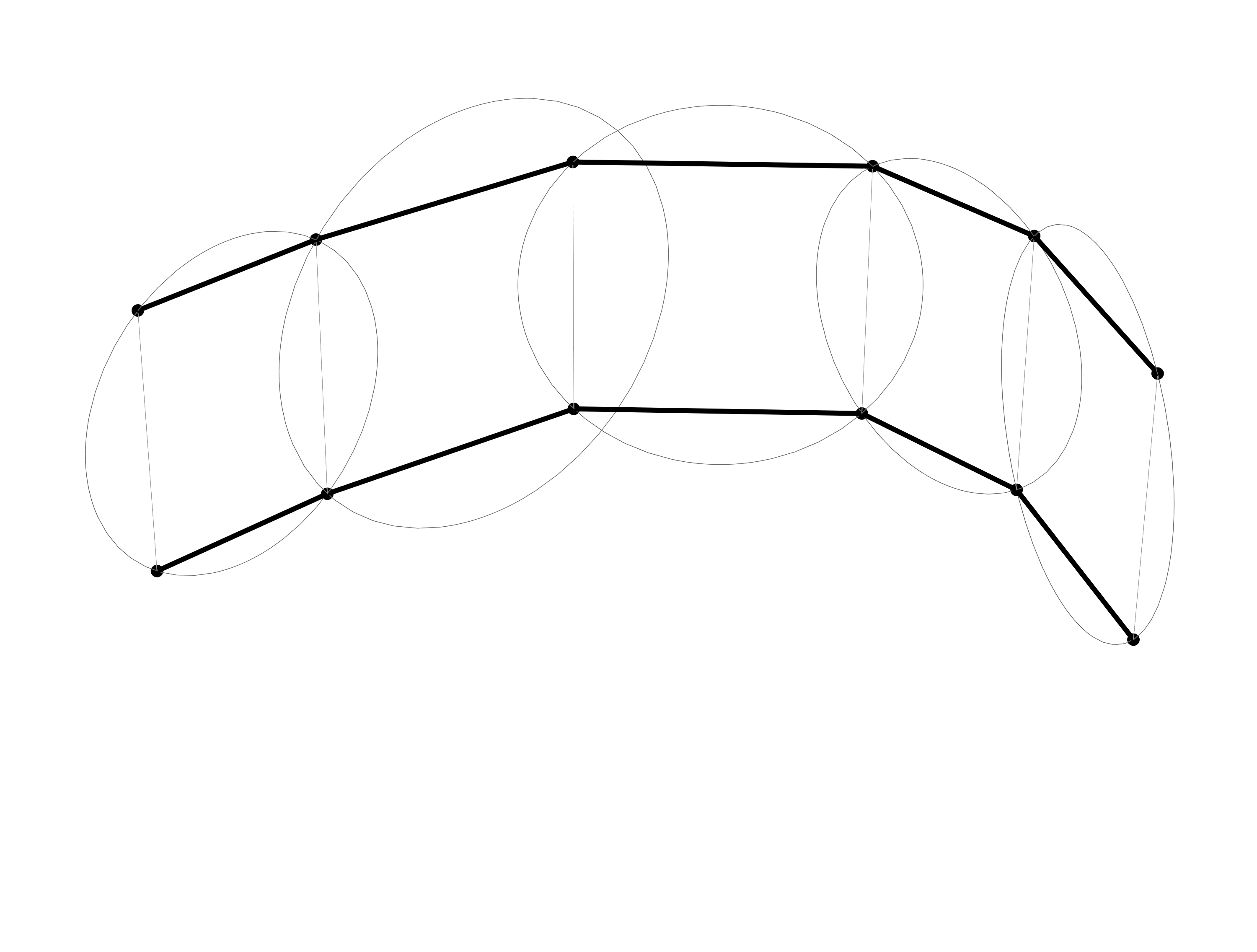}
		\end{minipage}
		\begin{minipage}{0.47\textwidth}
			\includegraphics[width=0.9\linewidth]{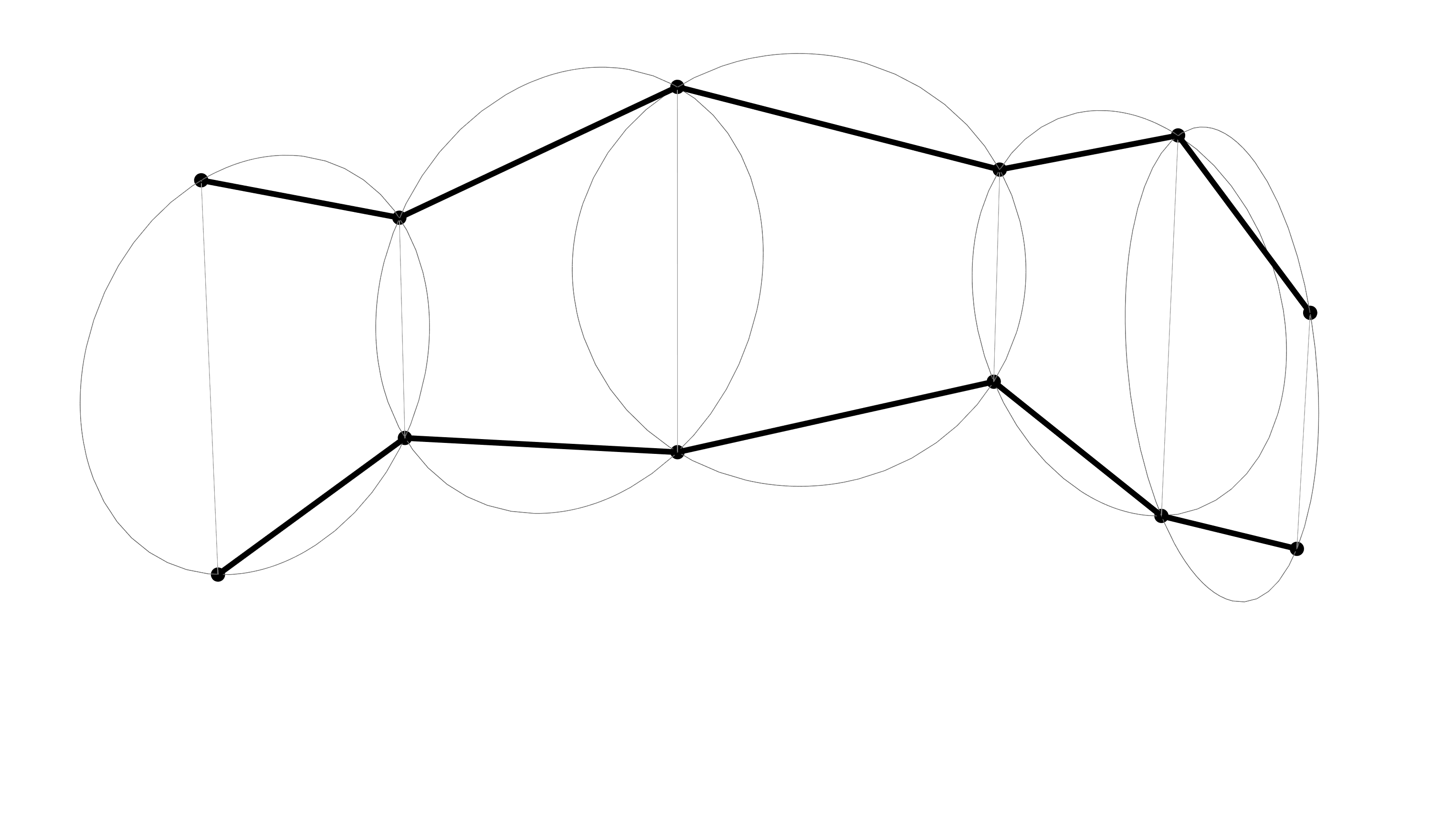}
		\end{minipage}
	\caption{Two examples of Darboux pairs of discrete arc-length polarised space curves that are reflected across a plane through centers of the discrete circle congruence.}
	\label{fig:discCount}
\end{figure}
As in the smooth case, reflections of certain discrete arc-length polarised curves are Darboux transforms (see Figure~\ref{fig:discCount}); treating these cases as trival cases, we obtain a characterisation of non-trivial Darboux transformations keeping the arc-length polarisation:
\begin{theorem}\label{thm:discB}
	Let  $x : (I, \frac{1}{m}) \to \mathbb{H}$ be arc-length polarised.
	Then a (non-trivial) Darboux transform $\hat{x}: (I, \frac{1}{m}) \to \mathbb{H}$ with spectral parameter $\mu$ is also arc-length polarised if and only if $|\hat{x}_i - x_i|^2 = \frac{1}{\mu}$ on some vertex $i \in I$.
\end{theorem}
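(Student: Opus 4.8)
The plan is to mirror the proof of Lemma~\ref{lemm:bicycle}, replacing the differential equation~\eqref{eqn:ode} for $|T|^2$ by a discrete recurrence. Write $T:=\hat{x}-x$. I would first note that the non-degeneracy assumption (Remark~\ref{rema:nondeg}) puts the cross-ratio $\mu/m_{ij}$ outside $\{0,1,\infty\}$, so on every edge the four points $x_i,x_j,\hat{x}_j,\hat{x}_i$ are distinct; in particular $T_i\neq 0$ and $\hat{x}_i\neq x_j$ everywhere. Since $x$ is arc-length polarised, $\dif{x}^d_{ij}=|{\dif{x}_{ij}}|^2\dif{x}_{ij}^{-1}=\overline{\dif{x}_{ij}}$, so solving the discrete Riccati equation~\eqref{eqn:riccati1} for $T_j$ gives $T_j\bigl(1+\mu\overline{\dif{x}_{ij}}\,T_i\bigr)=T_i+\dif{x}_{ij}$, with the bracket invertible by the distinctness just noted; hence $T_j=(T_i+\dif{x}_{ij})(1+\mu\overline{\dif{x}_{ij}}\,T_i)^{-1}$ and $|T_j|^2=|T_i+\dif{x}_{ij}|^2/|1+\mu\overline{\dif{x}_{ij}}\,T_i|^2$.

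Next I would record the discrete analogue of~\eqref{eqn:ode}: setting $f_i:=|T_i|^2-\tfrac1\mu$, a direct expansion of the last identity (using $|{\dif{x}_{ij}}|^2=\tfrac1{m_{ij}}$) should give
\[
	f_j=\frac{1-\mu/m_{ij}}{|1+\mu\overline{\dif{x}_{ij}}\,T_i|^2}\,f_i,
\]
that is, $f_j=c_{ij}f_i$ with $c_{ij}$ a nonzero real number — nonzero precisely because $\mu\neq m_{ij}$ and $1+\mu\overline{\dif{x}_{ij}}\,T_i\neq 0$. Propagating along the connected discrete interval $I$, this means $f$ vanishes at one vertex if and only if $f\equiv 0$, i.e.\ $|T|^2\equiv\tfrac1\mu$. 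This settles the ``if'' direction at once: if $|\hat{x}_{i_0}-x_{i_0}|^2=\tfrac1\mu$ for some $i_0$, then $|T_i|^2=|T_j|^2=\tfrac1\mu$ on every edge, and~\eqref{eqn:riccati2} gives $|{\dif{\hat{x}}_{ij}}|^2=\mu^2|T_j|^2|{\dif{x}^d_{ij}}|^2|T_i|^2=\mu^2\cdot\tfrac1\mu\cdot\tfrac1{m_{ij}}\cdot\tfrac1\mu=\tfrac1{m_{ij}}$, so $\hat{x}$ is arc-length polarised; and it reduces the ``only if'' direction to showing $f$ vanishes somewhere.

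For the ``only if'' direction I would argue by contradiction. If $\hat{x}$ is arc-length polarised then~\eqref{eqn:riccati2} forces $\mu^2|T_i|^2|T_j|^2=1$ on every edge. Suppose $f_i\neq 0$ for all $i$. Substituting the formula for $|T_j|^2$ into $\mu^2|T_i|^2|T_j|^2=1$, expanding, and dividing by $\mu|T_i|^2-1=\mu f_i\neq 0$, one obtains $\langle\dif{x}_{ij},T_i\rangle=-\tfrac12\bigl(|T_i|^2+\tfrac1\mu\bigr)$ on every edge. Feeding this back — using that $\langle w,\dif{x}_{ij}\rangle=0$ forces $\dif{x}_{ij}\,\bar w\,\dif{x}_{ij}=-|{\dif{x}_{ij}}|^2\,w$, together with the resulting identity $|1+\mu\overline{\dif{x}_{ij}}\,T_i|^2=-\mu|T_i|^2(1-\mu/m_{ij})$ — the decomposition $T_i=\lambda\,\dif{x}_{ij}+w$ makes $(T_i+\dif{x}_{ij})(1+\mu\overline{\dif{x}_{ij}}\,T_i)$ collapse to $(1-\mu/m_{ij})T_i$, whence
\[
	T_j=-\frac{1}{\mu|T_i|^2}\,T_i .
\]
Thus all $T_i$ are real scalar multiples of one another; writing $T_i=t_i\nu$ with $\nu$ a fixed unit quaternion and $t_i\in\mathbb{R}$, the relation reads $\mu t_it_j=-1$, so $\dif{\hat{x}}_{ij}=\mu T_j\,\overline{\dif{x}_{ij}}\,T_i=-\nu\,\overline{\dif{x}_{ij}}\,\nu$, i.e.\ the mirror image of $\dif{x}_{ij}$ in the hyperplane orthogonal to $\nu$. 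Comparing with $\dif{\hat{x}}_{ij}=\dif{x}_{ij}+(t_i-t_j)\nu$ then shows $t_i+2\langle x_i,\nu\rangle$ is constant, so $\hat{x}$ is obtained from $x$ by reflection across a fixed hyperplane through the centres of the circle congruence — exactly the trivial case excluded in the statement (cf.\ Figure~\ref{fig:discCount}). This contradiction forces $f_{i_0}=0$ for some $i_0$, which is the assertion.

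The conceptual skeleton is identical to the smooth case, and most of the work is bookkeeping with quaternion conjugation. The one genuinely new step, and the place where I expect the main effort, is the last one: in the smooth setting $|T|^2=\pm\tfrac1\mu$ and continuity pins the sign down immediately, whereas here the ``alternating'' solutions of $\mu^2|T_i|^2|T_j|^2=1$ survive and must be excluded via the explicit identity $T_j=-\tfrac1{\mu|T_i|^2}T_i$; the delicate point is keeping the orthogonal decomposition of $T_i$ relative to $\dif{x}_{ij}$ and the attendant quaternion identities straight.
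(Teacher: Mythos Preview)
Your argument is correct and takes a genuinely different route from the paper. One small slip: the product that collapses to $(1-\mu/m_{ij})T_i$ is $(T_i+\dif{x}_{ij})(1+\mu\bar{T}_i\dif{x}_{ij})$ --- the numerator times the \emph{conjugate} of the denominator --- but your conclusion $T_j=-T_i/(\mu|T_i|^2)$ is unaffected. Your recurrence $f_j=\frac{1-\mu/m_{ij}}{|1+\mu\overline{\dif{x}_{ij}}T_i|^2}\,f_i$ is a clean discrete analogue of~\eqref{eqn:ode} that the paper does not isolate; the paper instead establishes the ``if'' direction by a direct computation of $|{\dif{\hat{x}}_{ij}}|$ on one edge followed by propagation. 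For the ``only if'' direction the paper argues geometrically: concircularity together with $|\dif{x}_{ij}|=|\dif{\hat{x}}_{ij}|$ makes the quadrilateral $x_i,x_j,\hat{x}_j,\hat{x}_i$ an isosceles trapezoid, so by its reflection symmetry either $|T_i|=|T_j|$ or the diagonals agree, the latter alternative forcing $T_i\parallel T_j$ on every edge and hence the reflection case; the former, combined with $|T_i|^2|T_j|^2=1/\mu^2$ and the sign of the cross-ratio, pins down $|T|^2=1/\mu$. You stay purely algebraic, extracting $\langle\dif{x}_{ij},T_i\rangle=-\tfrac12(|T_i|^2+1/\mu)$ and then the explicit relation $T_j=-T_i/(\mu|T_i|^2)$ via quaternion identities. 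Your route mirrors the smooth proof of Lemma~\ref{lemm:bicycle} more faithfully and avoids any appeal to circle geometry; the paper's argument is shorter and more conceptual, trading the quaternion bookkeeping for elementary plane geometry of cyclic quadrilaterals.
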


\begin{proof}
	As in the statement of the proof, let $x$ be a discrete arc-length polarised curve.
	To show the necessary direction, assume that $|T_i|^2 = |\hat{x}_i - x_i|^2 = \frac{1}{\mu}$ for some $i \in I$.
	From the discrete Riccati equation \eqref{eqn:riccati1}, we can calculate that on the edge $(ij)$
		\[
			\hat{x}_j = (\mu x_j \overline{\dif{x}_{ij}}T_i + \hat{x}_i)(1 + \mu\overline{\dif{x}_{ij}}T_i)^{-1},
		\]
	so that
		\[
			\dif{\hat{x}}_{ij} = \mu(\hat{x}_i - x_j)\overline{\dif{x}_{ij}}T_i (1 + \mu\overline{\dif{x}_{ij}}T_i)^{-1}.
		\]
	On the other hand, since $x$ is arc-length polarised,
		\begin{align*}
			|1 + \mu\overline{\dif{x}_{ij}}T_i|^2 &= 1 + \mu(\overline{T}_i \dif{x}_{ij} + \overline{\dif{x}_{ij}}T_i) + \frac{\mu}{m_{ij}} \\
				&= \mu \left(\overline{T}_i T_i + \overline{T}_i \dif{x}_{ij} + \overline{\dif{x}_{ij}}T_i +  \overline{\dif{x}_{ij}} \dif{x}_{ij} \right) \\
				&= \mu |T_i + \dif{x}_{ij}|^2 = \mu |\hat{x}_i - x_j|^2.
		\end{align*}
	Therefore, we have that
		\[
			|{\dif{\hat{x}}_{ij}}|^2 = \frac{|\mu(\hat{x}_i - x_j)\overline{\dif{x}_{ij}}T_i|^2}{|1 + \mu\overline{\dif{x}_{ij}}T_i|^{-2}}
				= \frac{\mu|\hat{x}_i - x_j|^2 |{\dif{x}_{ij}}|^2}{\mu|\hat{x}_i - x_j|^2} = \frac{1}{m_{ij}}.
		\]
	Then, the discrete Riccati equation \eqref{eqn:riccati1} allows us to see that $|T_j|^2 = \frac{1}{\mu}$.
	Propagating the above proof for any edge $(ij)$, we have that $\hat{x}$ is discrete arc-length polarised.
	
	To see the sufficient direction, assume that $|{\dif{x}_{ij}}|^2 = |{\dif{\hat{x}}_{ij}}|^2 = \frac{1}{m_{ij}}$; hence, the cross-ratios condition \eqref{eqn:cr} implies 
		\begin{equation}\label{eqn:titj}
			|T_i|^2 |T_j|^2 = \frac{1}{\mu^2}.
		\end{equation}
	On the other hand, by Remark~\ref{rema:cratio}, we have that the circular quadrilateral formed by $x_i, x_j, \hat{x}_j, \hat{x}_i$ is an isosceles trapezoid and hence symmetric; therefore, we must have either
		\[
			|T_i|^2 = |T_j|^2 \quad\text{or}\quad |\hat{x}_i - x_j|^2 = |\hat{x}_j - x_i|^2.
		\]
	If $|T_i|^2 \neq |T_j|^2$ on any one edge $(ij)$, then by \eqref{eqn:titj} we must have that $|T_i|^2 \neq |T_j|^2$ on every edge $(ij)$.
	Therefore, on every edge, we have $|\hat{x}_i - x_j|^2 = |\hat{x}_j - x_i|^2$ so that $T_i \parallel T_j$, and the symmetry of quadrilaterals then forces the discrete curve $\hat{x}$ to be a reflection of $x$, allowing us to exclude this case.
	
	Hence, we see that $T_i$ and $T_j$ are not parallel on every edge $(ij)$ so that the circular quadrilateral is non-embedded.
	Then the cross-ratios condition \eqref{eqn:cr} reads
		\[
			\cratio(x_i, x_j, \hat{x}_j, \hat{x}_i) = \frac{\mu}{m_{ij}} > 0.
		\]
	Now since $\frac{1}{m}$ is an arc-length polarisation, we have that $m_{ij} > 0$ on every edge $(ij)$ so that $\mu > 0$.
	
	Thus, we conclude that $|T_i|^2 = |T_j|^2 = \frac{1}{\mu}$ on every vertex $i \in I$.
\end{proof}

\begin{figure}
	\centering
	\begin{minipage}{0.32\textwidth}
		\includegraphics[width=\linewidth]{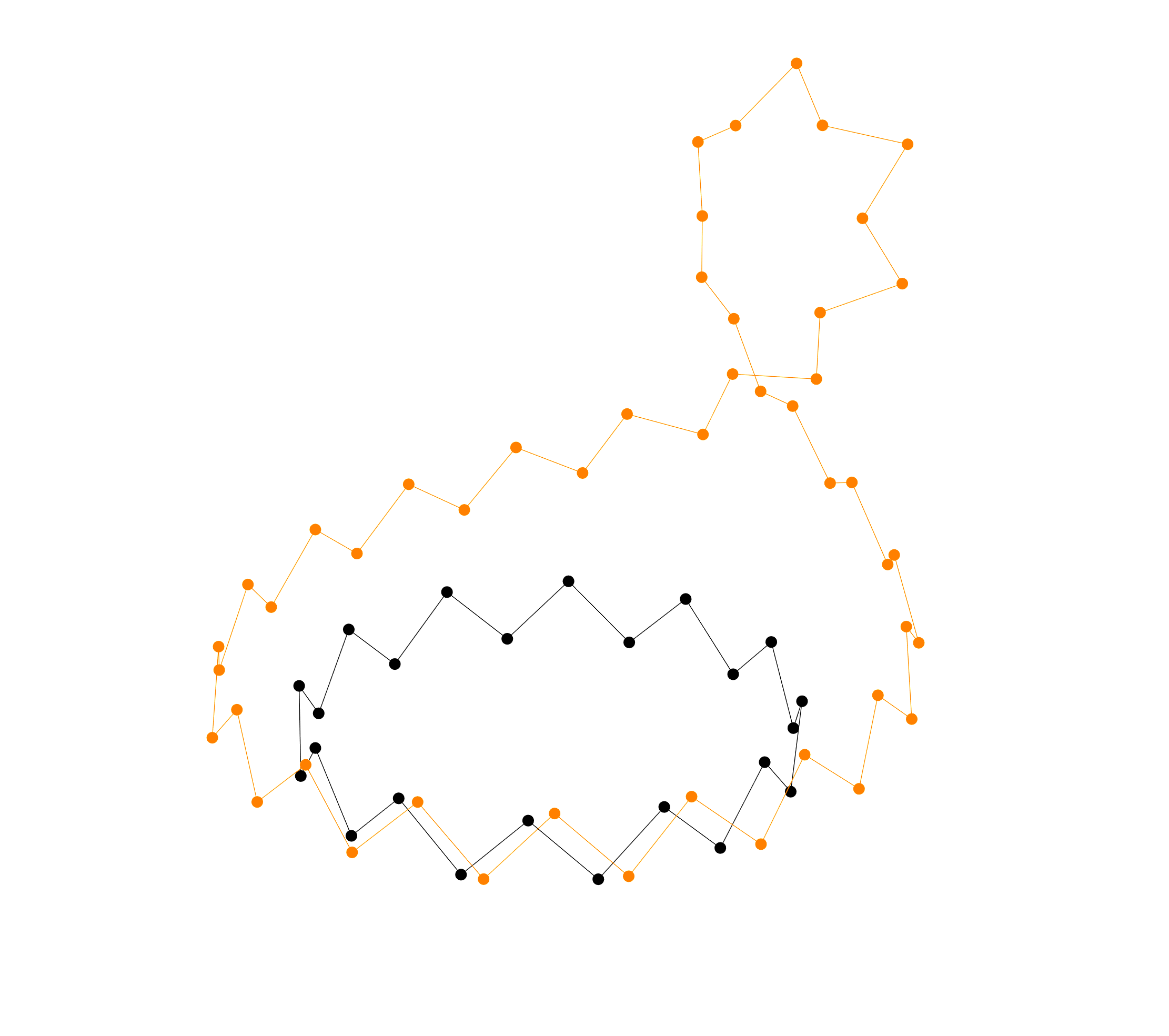}
	\end{minipage}
	\begin{minipage}{0.32\textwidth}
		\includegraphics[width=\linewidth]{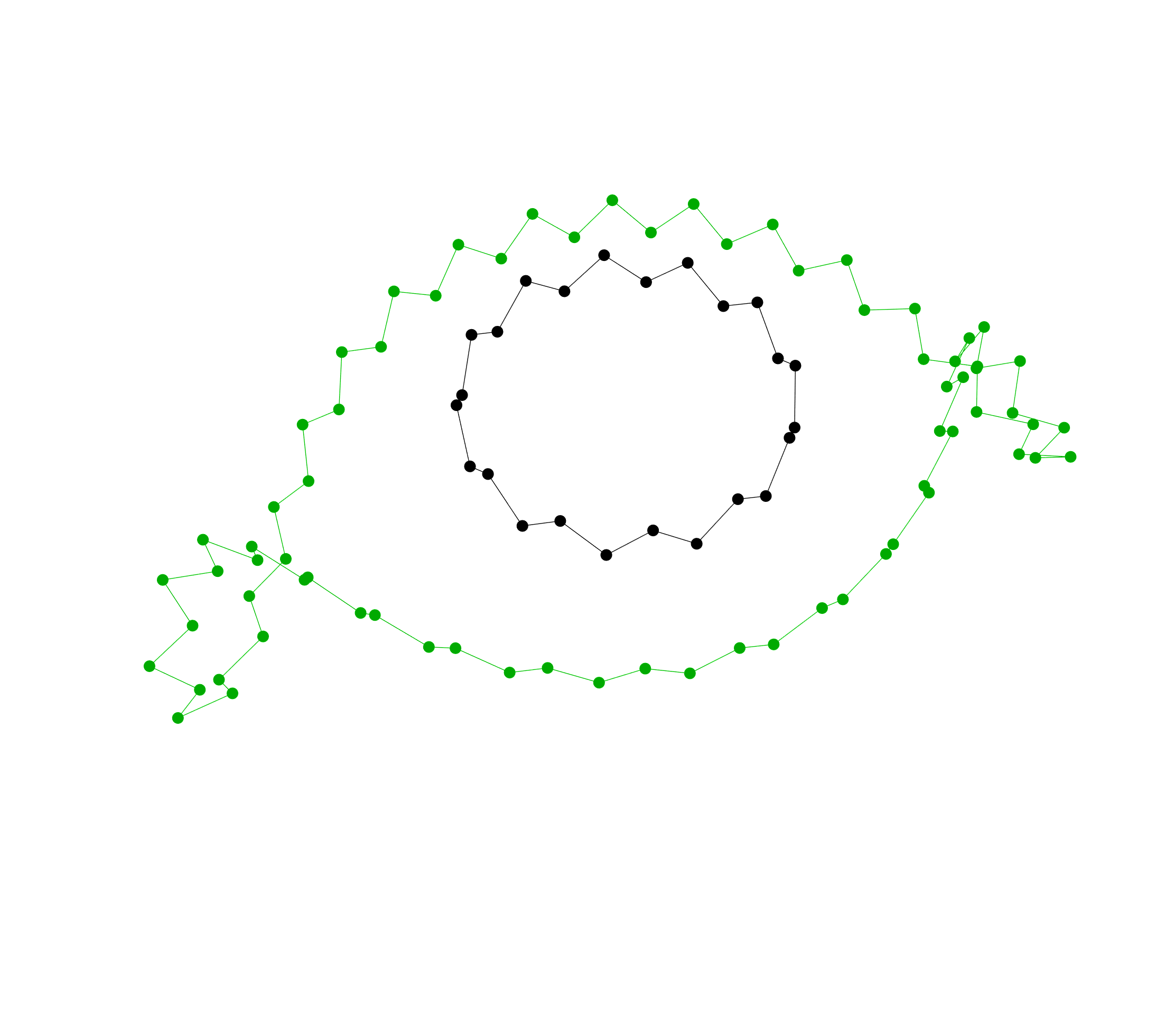}
	\end{minipage}
	\begin{minipage}{0.32\textwidth}
		\includegraphics[width=\linewidth]{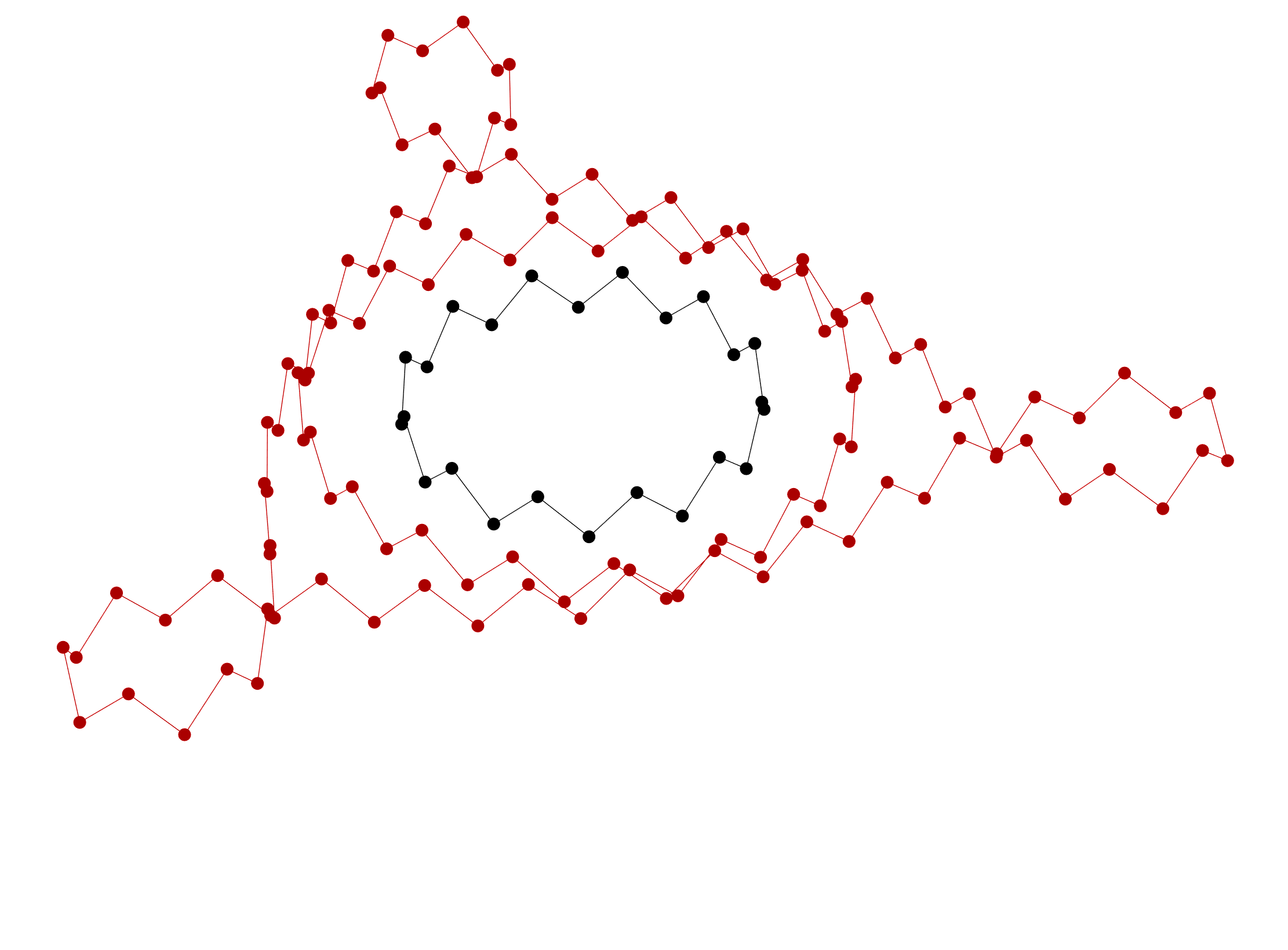}
	\end{minipage}
	\caption{Closed bicycle correspondences of a non-planar discrete curve.}
	\label{fig:discB}
\end{figure}
As in the smooth case, the discrete Darboux transformation keeping the arc-length polarisation is known as a \emph{discrete bicycle correspondence} \cite{tabachnikov_discrete_2013}.
Applying the monodromy problem, we can obtain the discrete bicycle monodromy with examples given in Figure~\ref{fig:discB}.
	
\begin{figure}
	\centering
	\begin{minipage}{0.25\textwidth}
		\includegraphics[width=\linewidth]{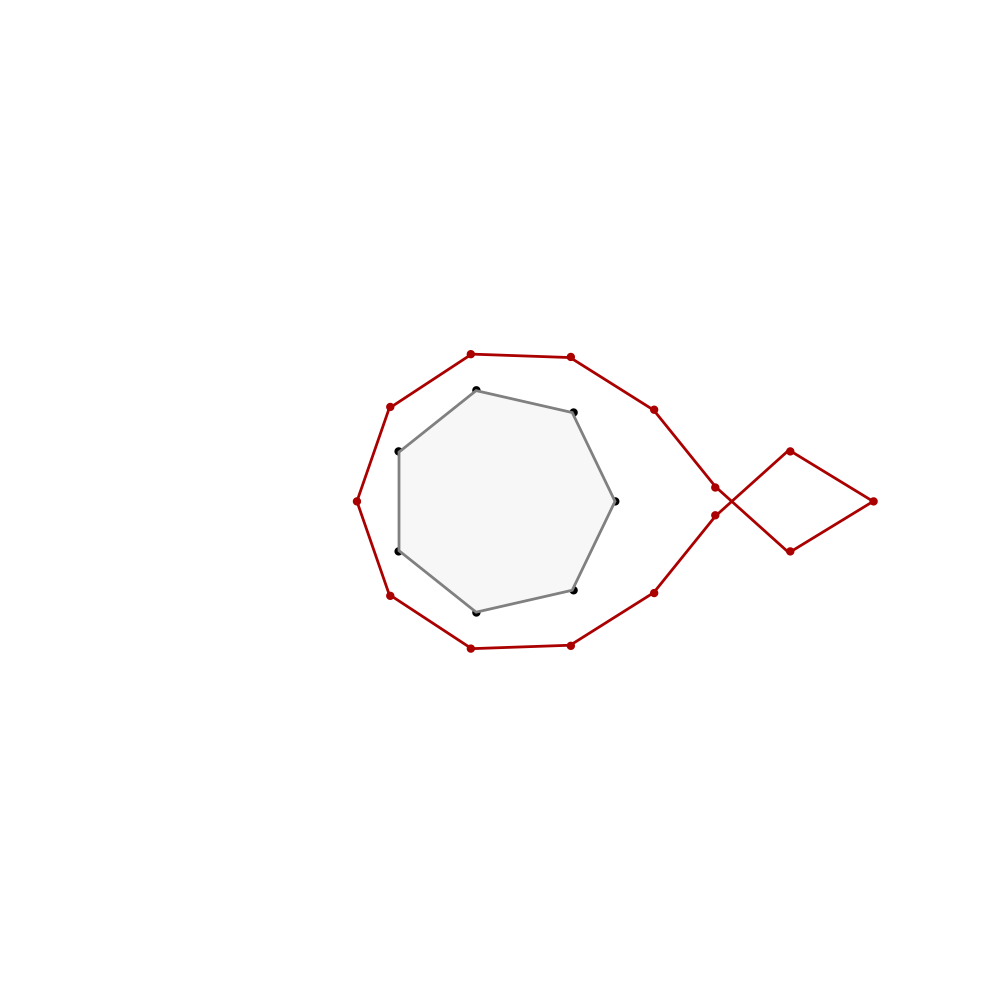}
	\end{minipage}
	\quad
	\begin{minipage}{0.25\textwidth}
		\includegraphics[width=\linewidth]{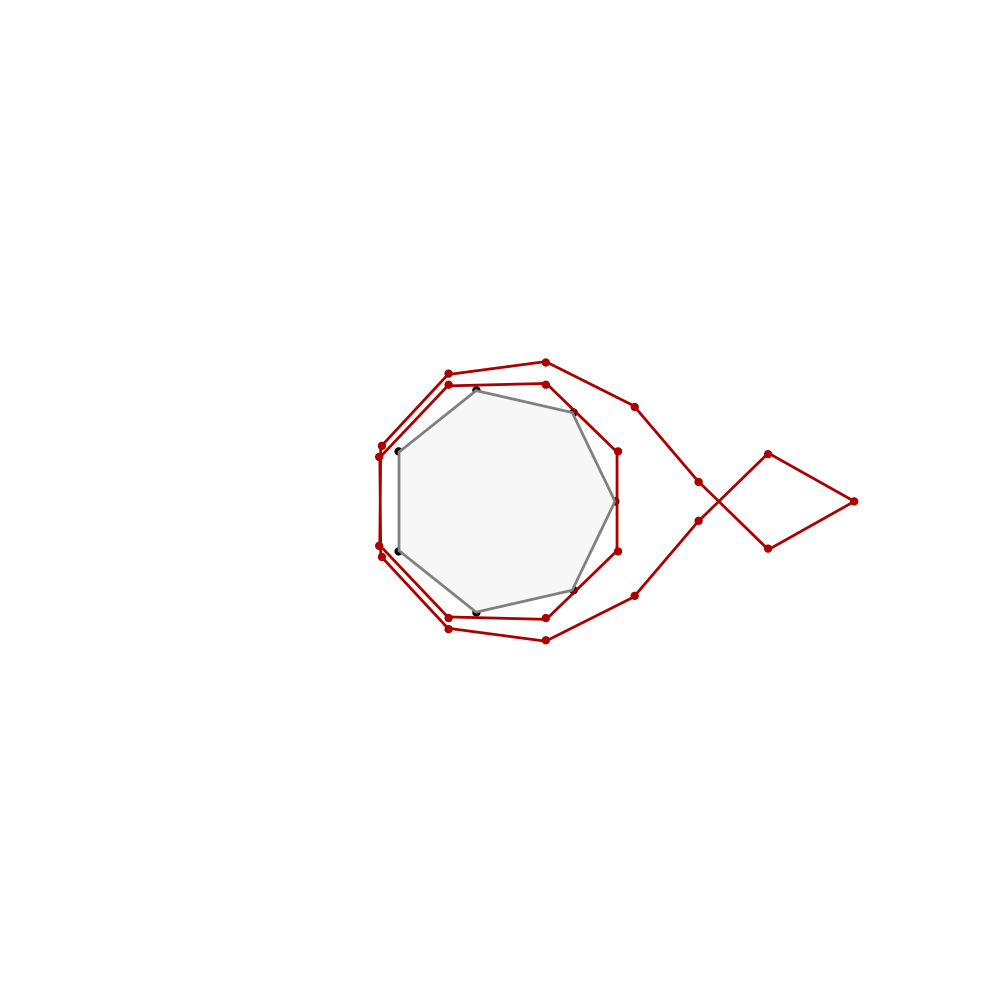}
	\end{minipage}
	\quad
	\begin{minipage}{0.25\textwidth}
		\includegraphics[width=\linewidth]{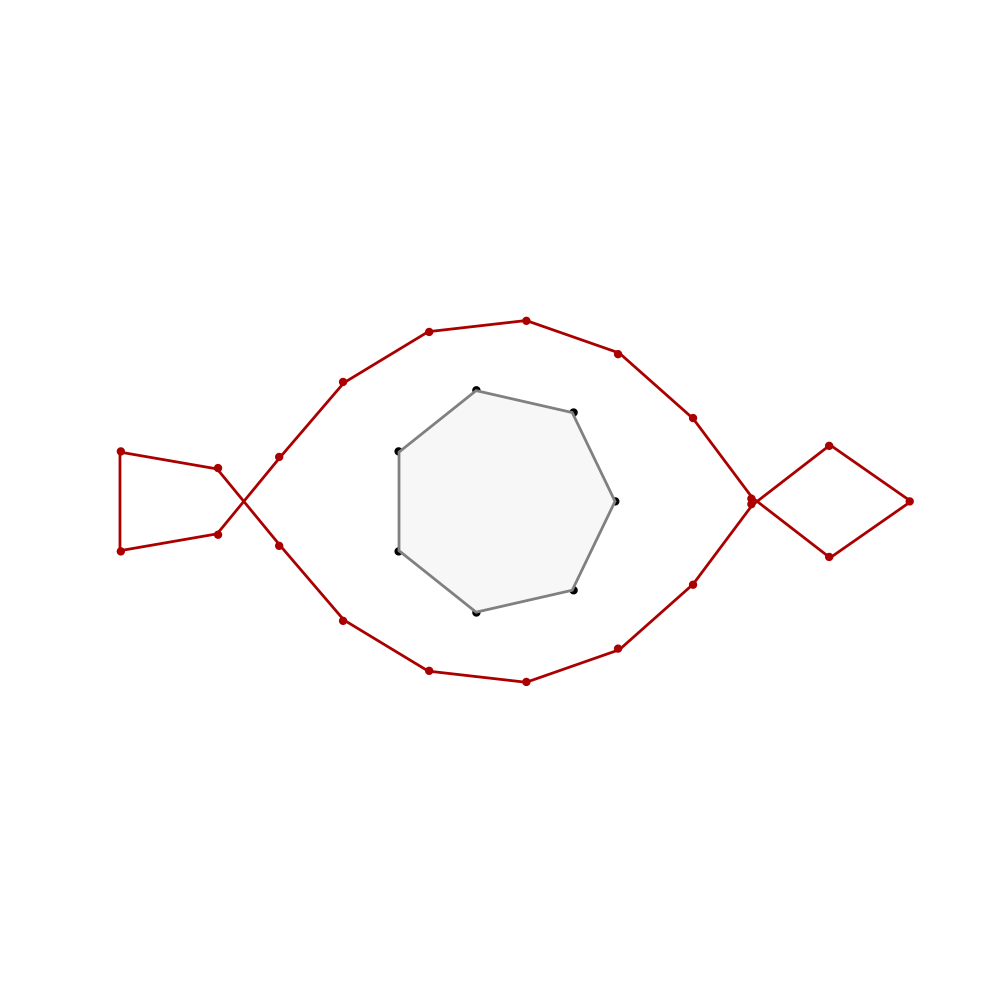}
	\end{minipage}
	\\
	\begin{minipage}{0.25\textwidth}
		\includegraphics[width=\linewidth]{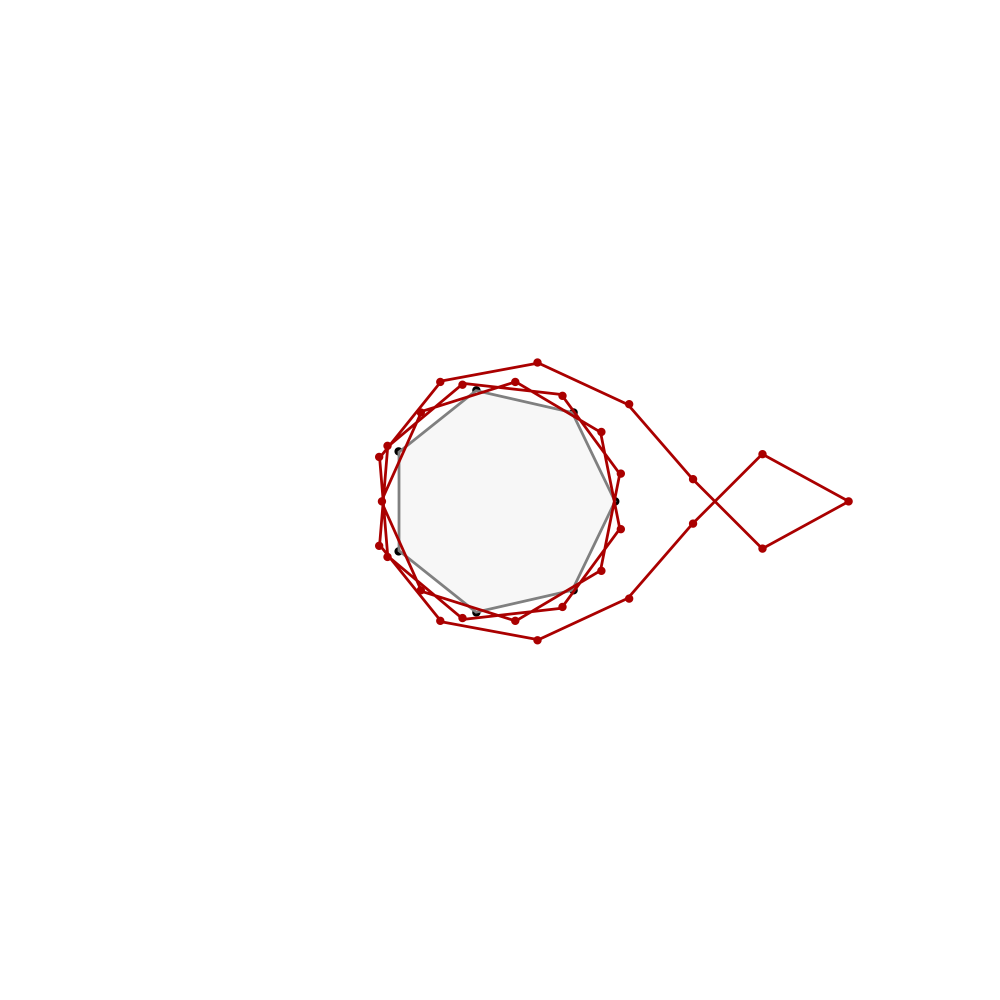}
	\end{minipage}
	\quad
	\begin{minipage}{0.25\textwidth}
		\includegraphics[width=\linewidth]{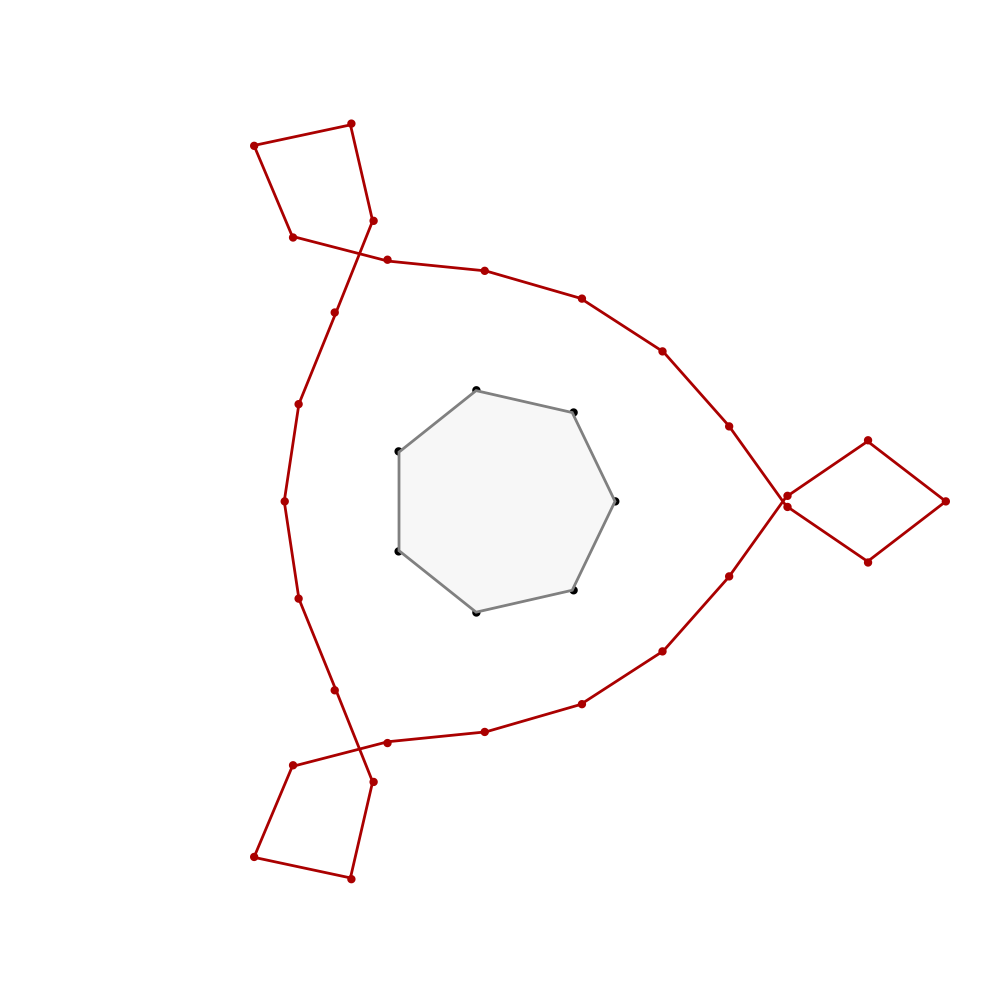}
	\end{minipage}
	\quad
	\begin{minipage}{0.25\textwidth}
		\includegraphics[width=\linewidth]{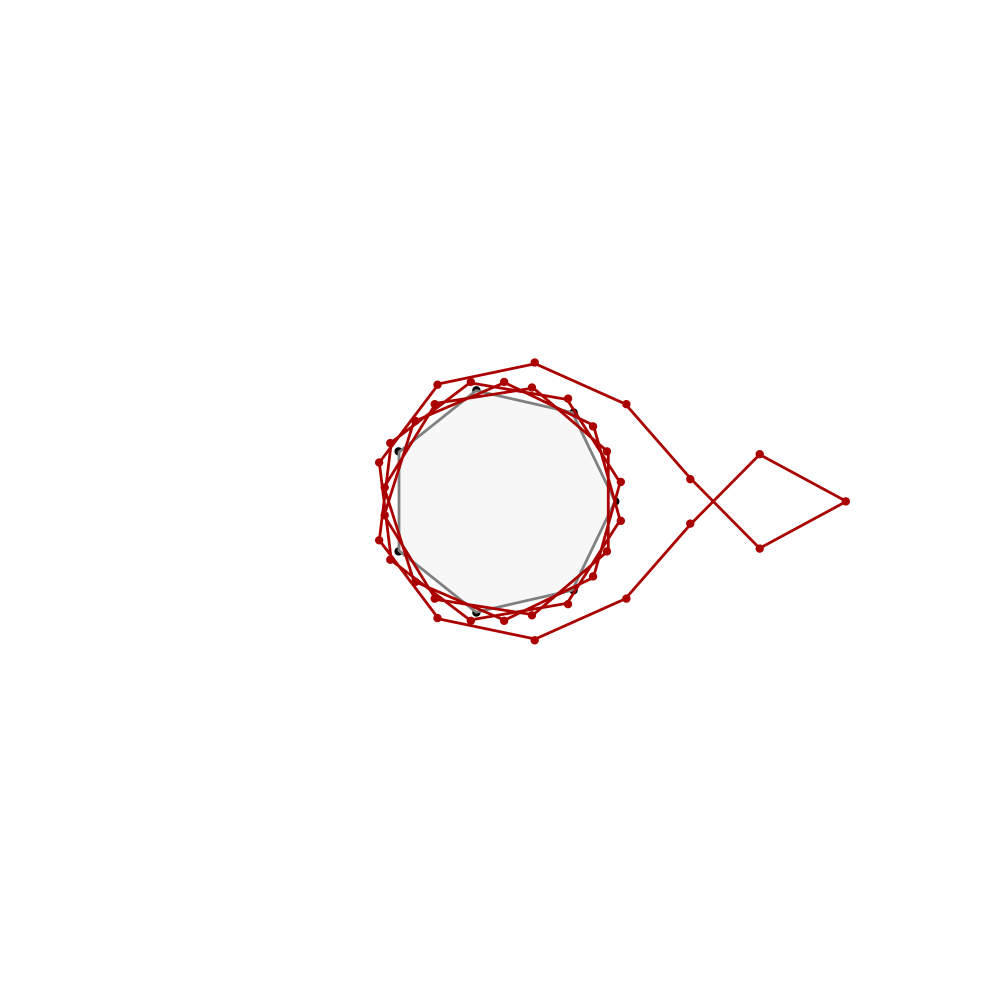}
	\end{minipage}
	\\
	\begin{minipage}{0.25\textwidth}
		\includegraphics[width=\linewidth]{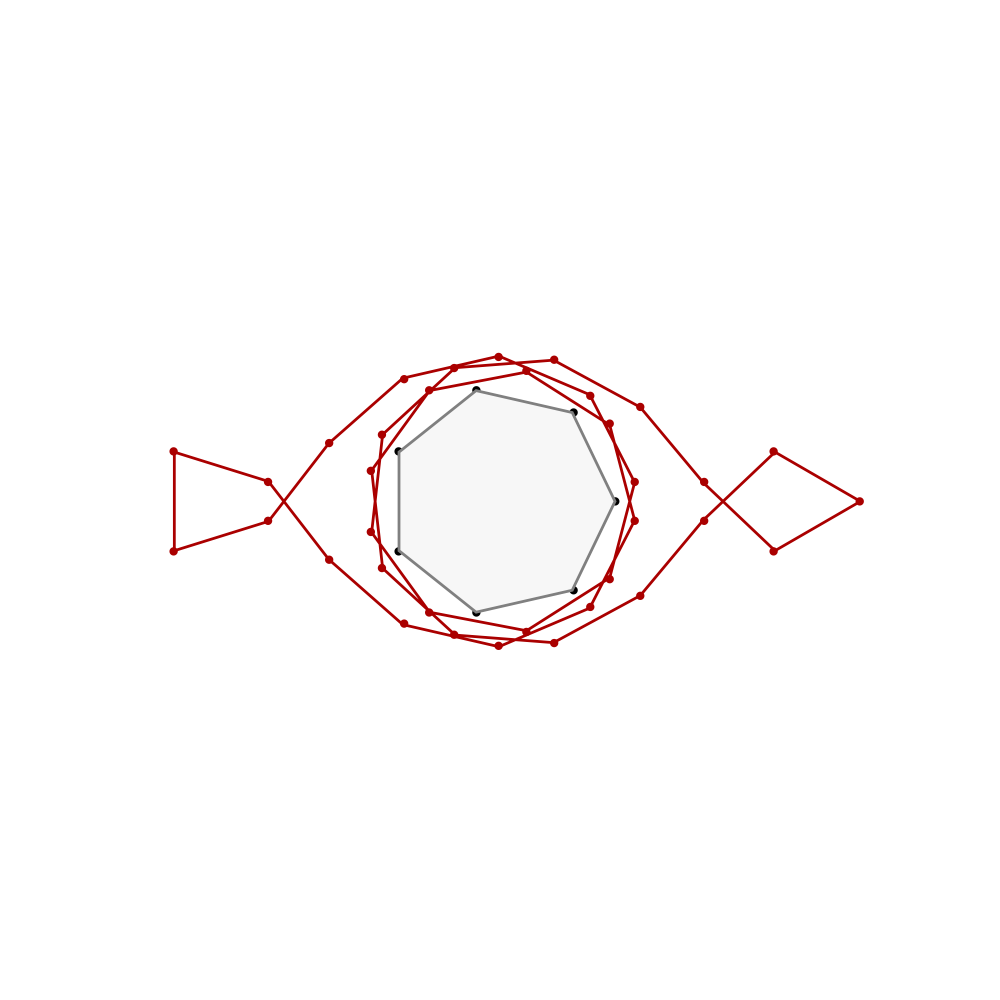}
	\end{minipage}
	\quad
	\begin{minipage}{0.25\textwidth}
		\includegraphics[width=\linewidth]{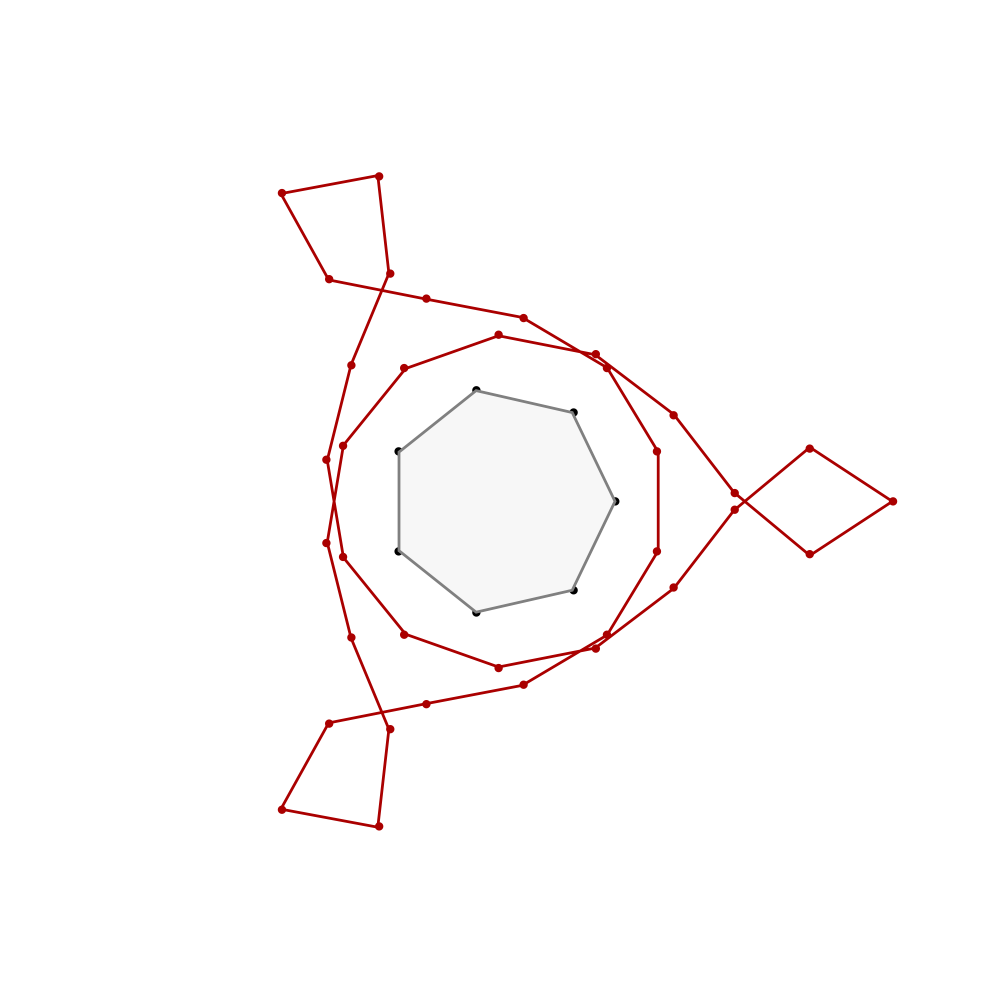}
	\end{minipage}
	\quad
	\begin{minipage}{0.25\textwidth}
		\includegraphics[width=\linewidth]{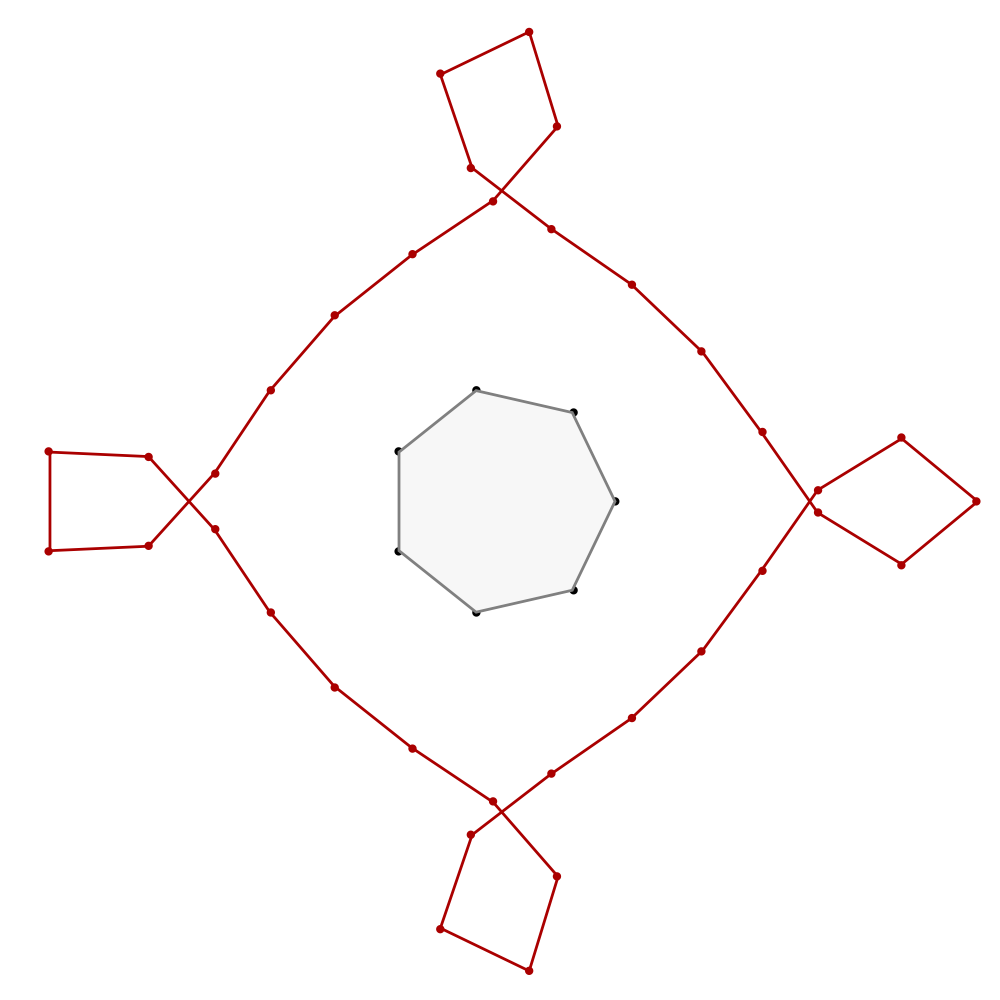}
	\end{minipage}
	\caption{Discrete circletons for $M = 7$, drawn with $\tau = \pi$ and $(k, \ell) = (1,2)$, $(1,3)$,
	 $(2,3)$, $(1,4)$, $(3,4)$, $(1,5)$, $(2,5)$, $(3,5)$, $(4,5)$ over $\ell$--fold cover of the circle.}
	\label{fig:discB1}
\end{figure}
\begin{example}\label{exam:disc2}
	In this example, we recover the discrete analogue of the smooth case in Example~\ref{exam:smoothB}: Consider the planar bicycle correspondences of the arc-length polarised discrete circle as in Example~\ref{exam:disc}, that is $x = e^{\frac{2 \pi \ii}{M} n} \in \mathbb{C}$ with $m_{ij} = | 1 - e^\frac{2\pi \ii}{M}|^{-2}$.
	Then the recurrence relation on the complex--valued function $a$ \eqref{eqn:diffPar2} becomes
		\[
			e^{-\frac{2\pi \ii}{M}} a_k - (1 + e^{-\frac{2\pi \ii}{M}}) a_j + (1 - \hat{\mu} | 1 - e^\frac{2\pi \ii}{M}|^2) a_i = 0
		\]
	on any three consecutive vertices $(ijk)$.
	Therefore, $a = c^- a^- + c^+ a^+$ where
		\[
			a_n^\pm = \left(\frac{1}{2}\left(e^\frac{2\pi \ii}{M} (1 \pm s) + (1 \mp s)\right)\right)^n
		\]
	so that $b_i = -(\dif{x}_{ij})^{-1} \dif{a}_{ij}$ implies
		\[
			b_n = -\frac{1}{2} e^{-\frac{2\pi \ii}{M}n}\left(c^- (1- s) a^- + c^+ (1 + s) a^+\right)
		\]
	for $s = \sqrt{1 - 4\mu}$.

\begin{figure}
	\centering
	\begin{minipage}{0.25\textwidth}
		\includegraphics[width=\linewidth]{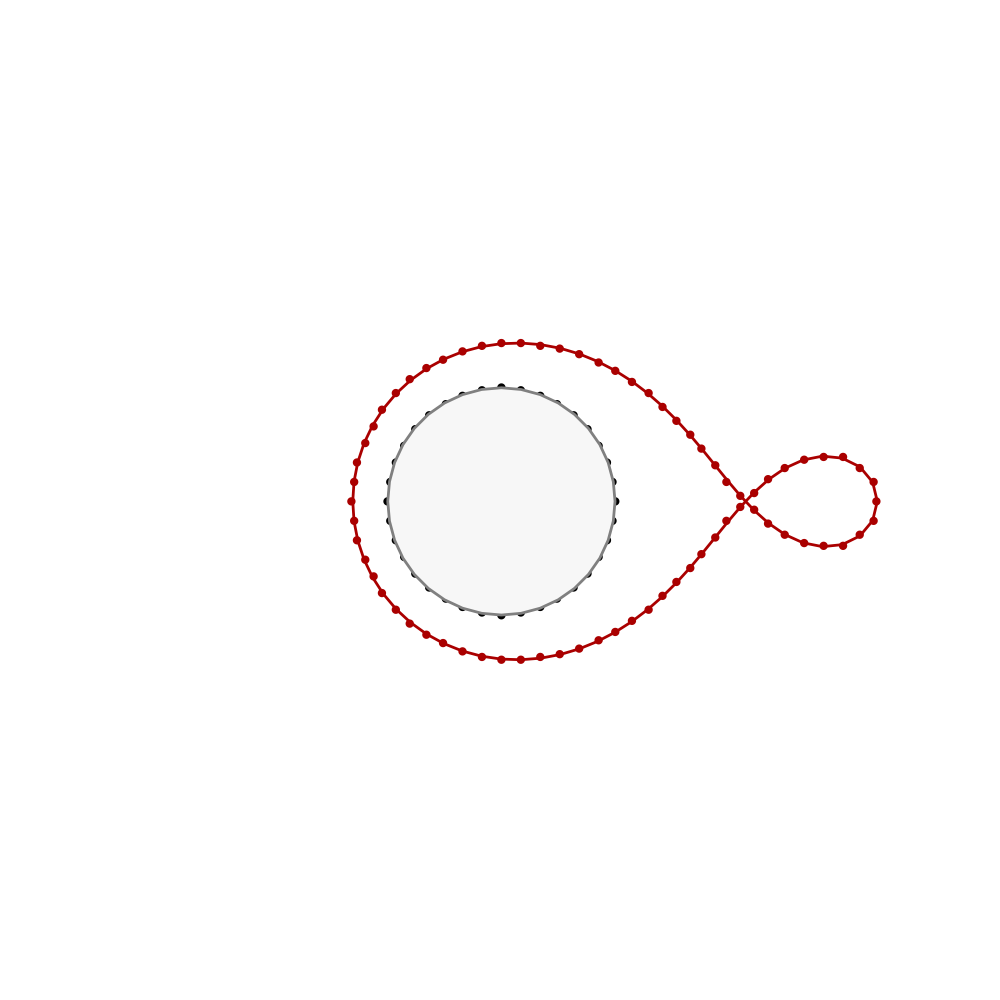}
	\end{minipage}
	\quad
	\begin{minipage}{0.25\textwidth}
		\includegraphics[width=\linewidth]{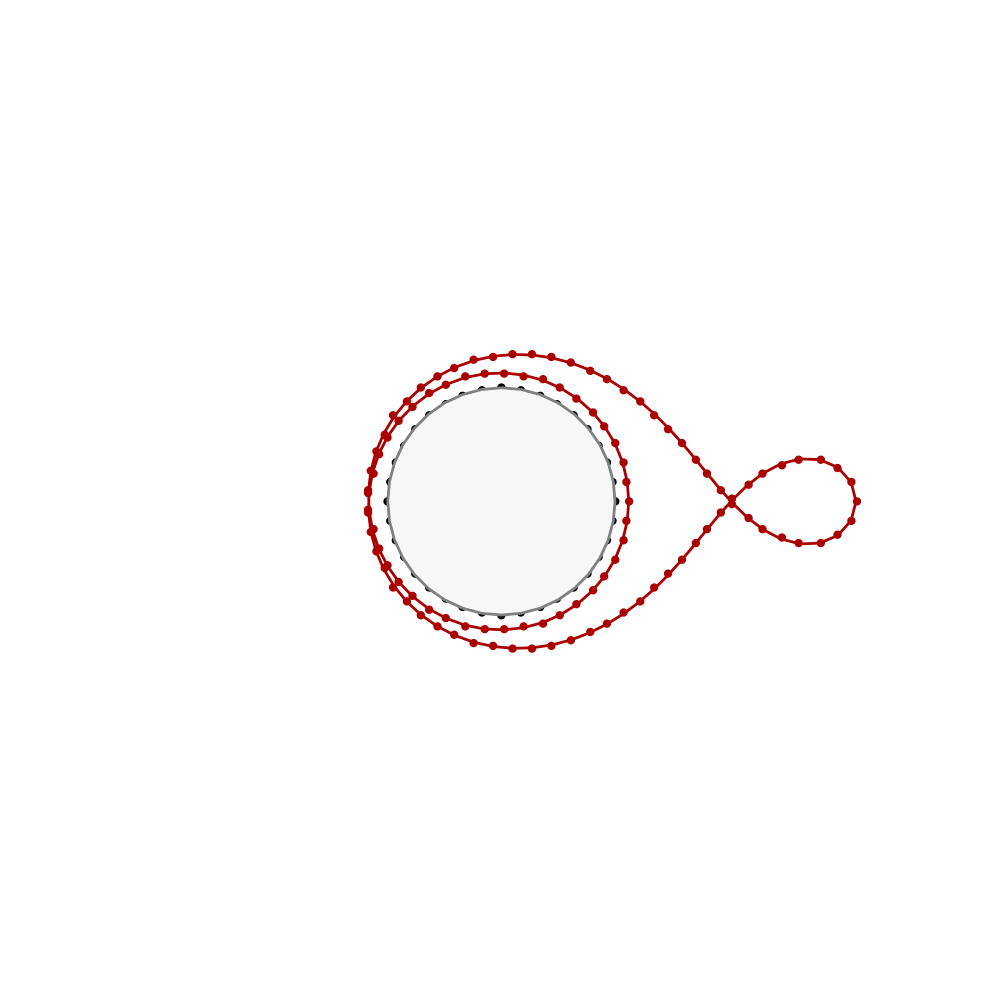}
	\end{minipage}
	\quad
	\begin{minipage}{0.25\textwidth}
		\includegraphics[width=\linewidth]{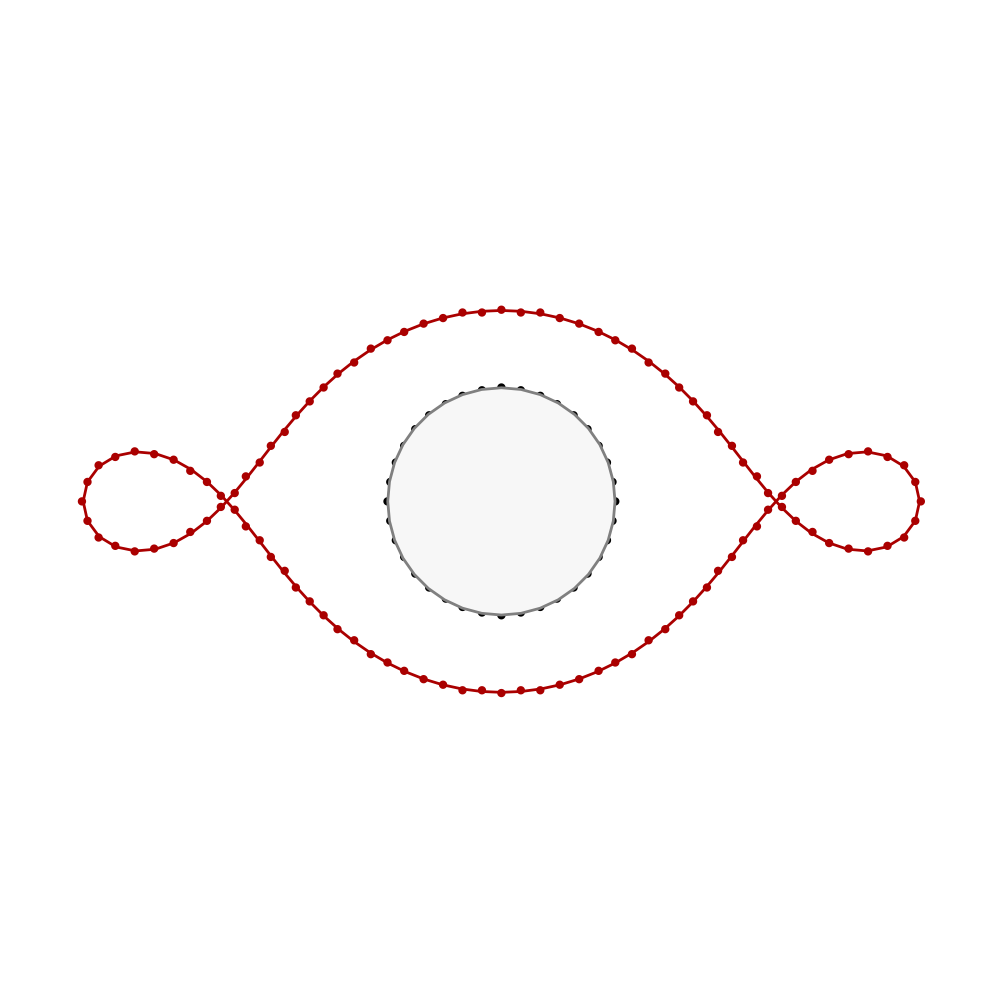}
	\end{minipage}
	\\
	\begin{minipage}{0.25\textwidth}
		\includegraphics[width=\linewidth]{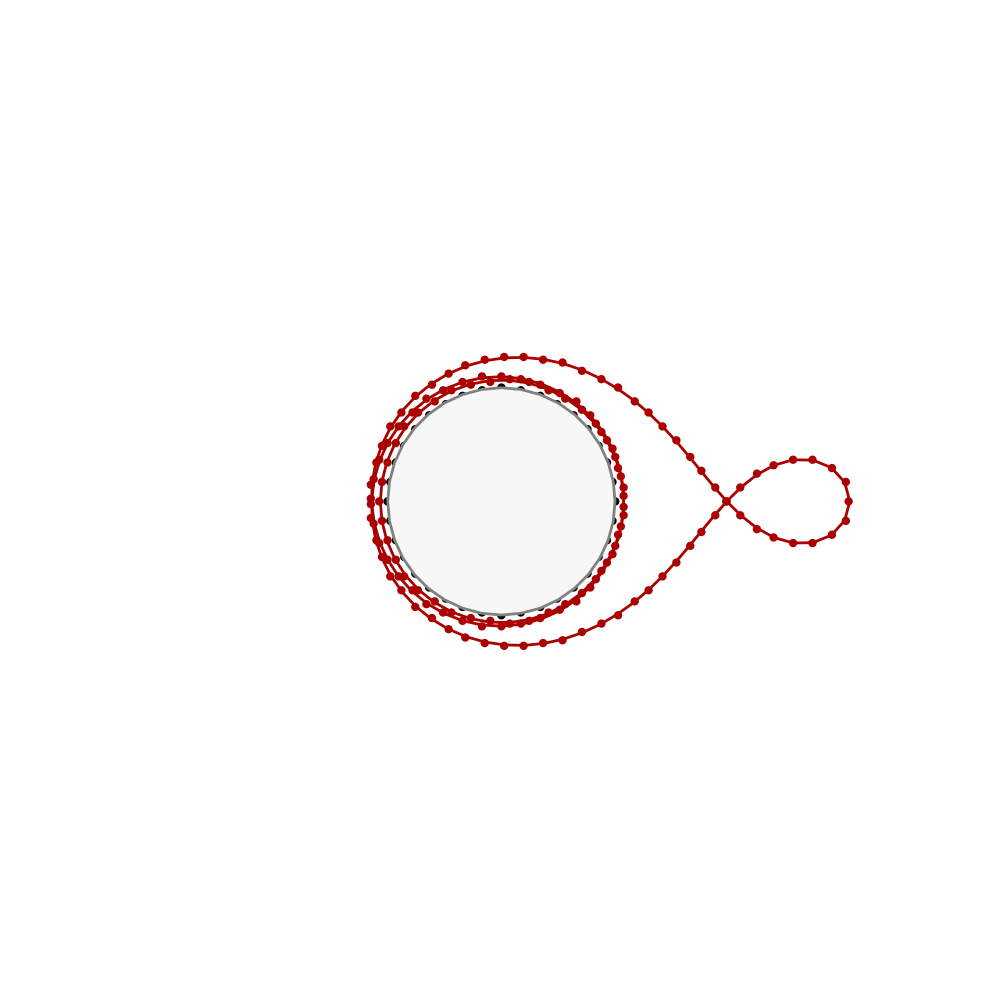}
	\end{minipage}
	\quad
	\begin{minipage}{0.25\textwidth}
		\includegraphics[width=\linewidth]{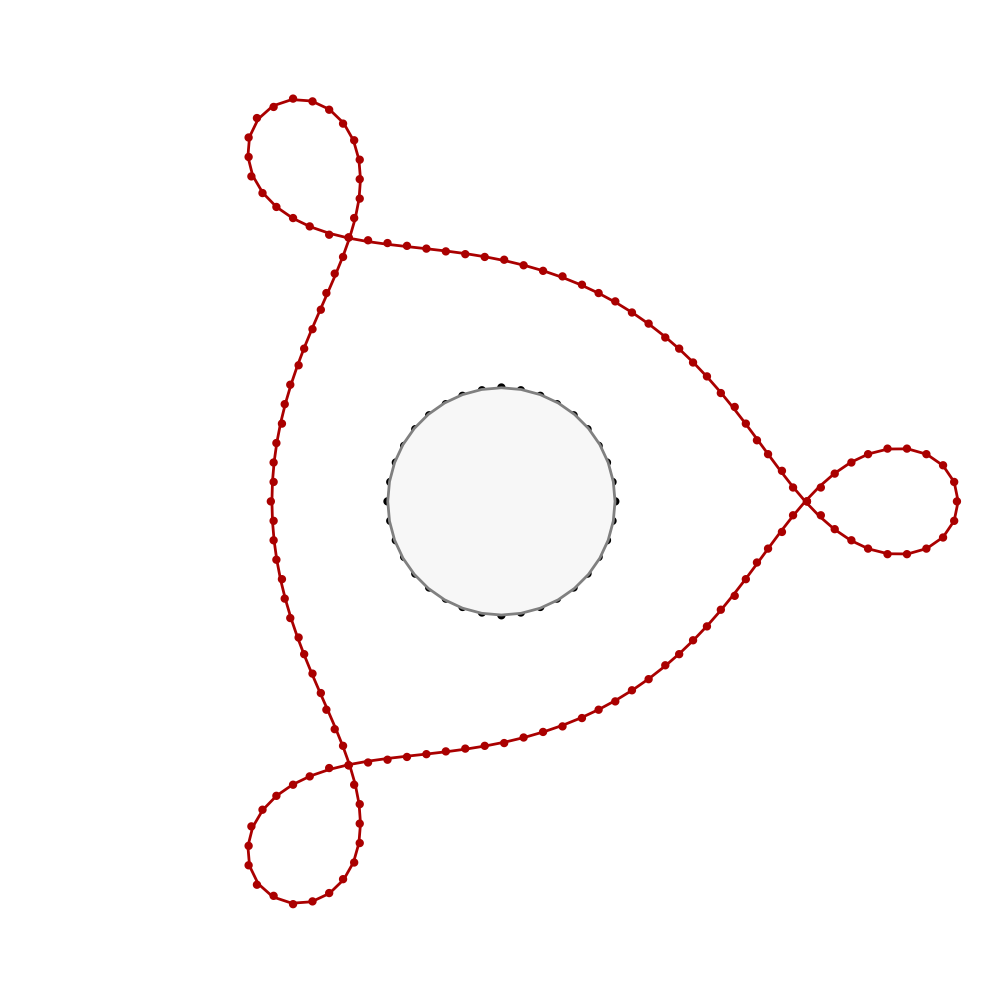}
	\end{minipage}
	\quad
	\begin{minipage}{0.25\textwidth}
		\includegraphics[width=\linewidth]{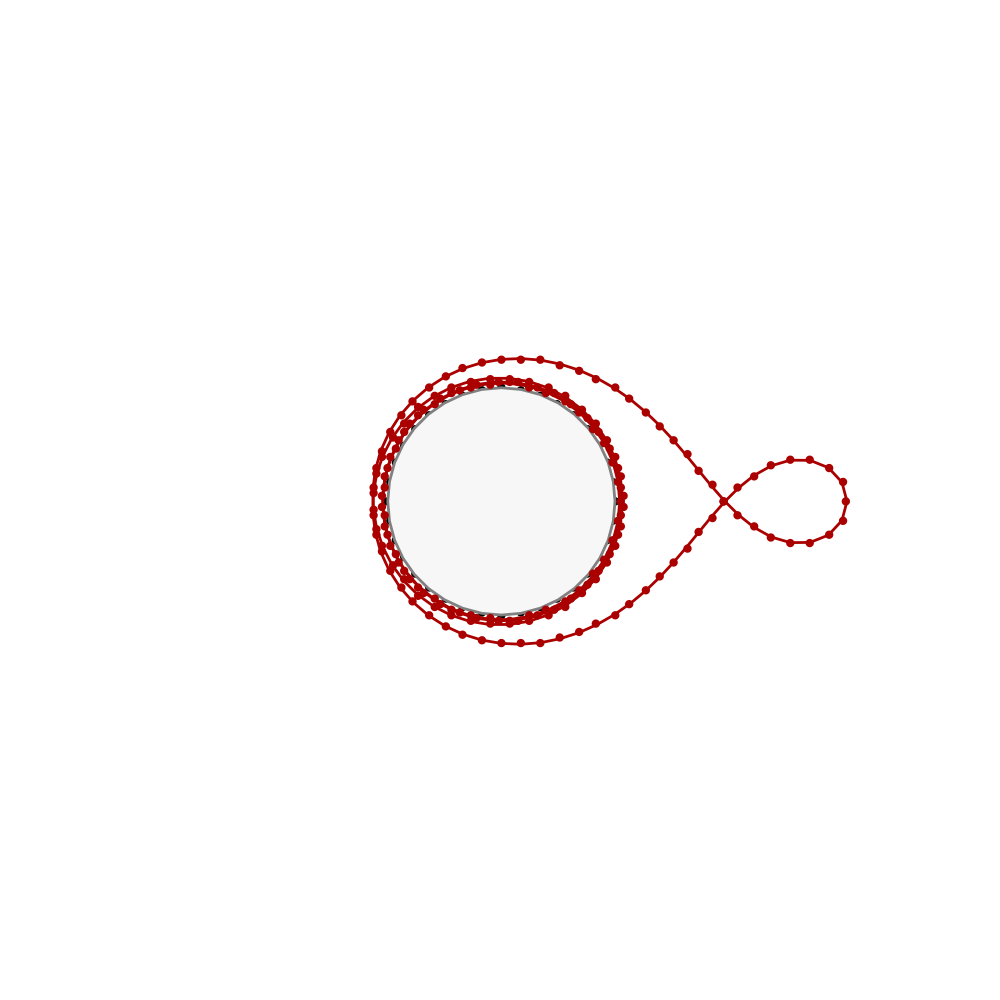}
	\end{minipage}
	\\
	\begin{minipage}{0.25\textwidth}
		\includegraphics[width=\linewidth]{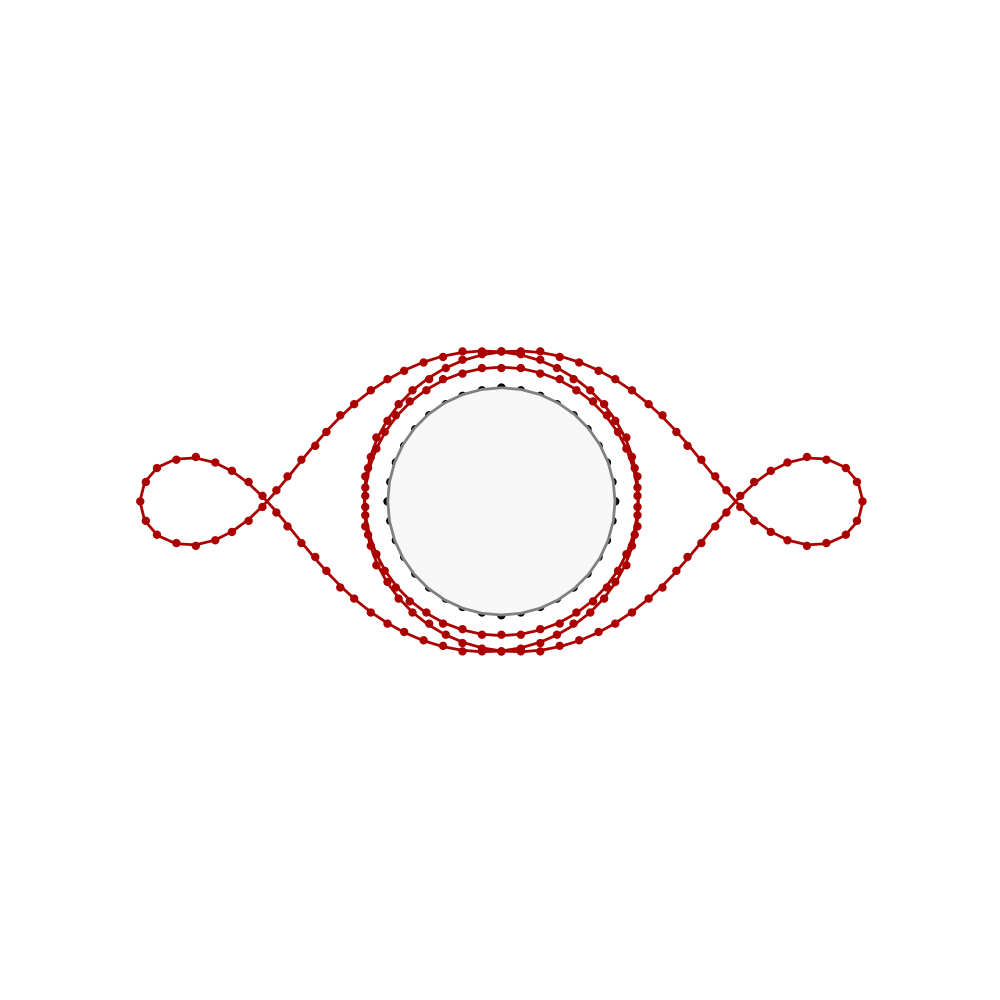}
	\end{minipage}
	\quad
	\begin{minipage}{0.25\textwidth}
		\includegraphics[width=\linewidth]{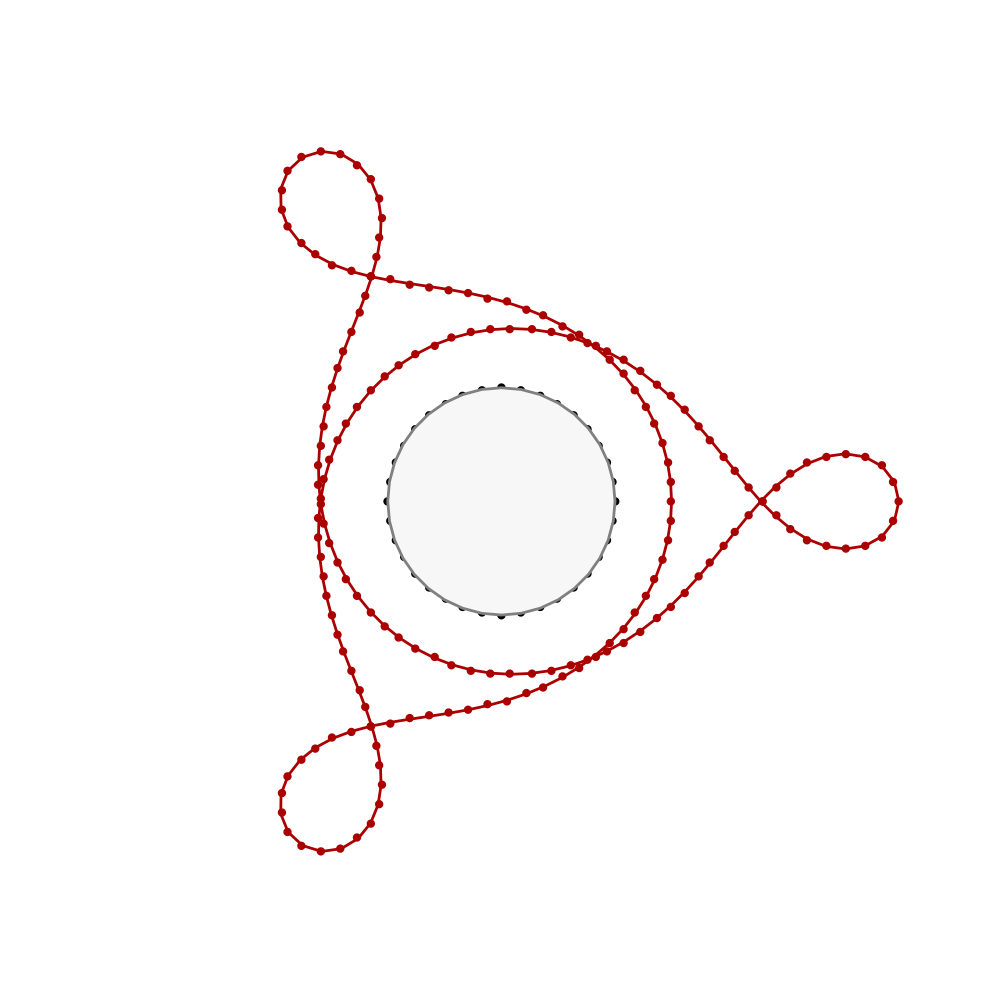}
	\end{minipage}
	\quad
	\begin{minipage}{0.25\textwidth}
		\includegraphics[width=\linewidth]{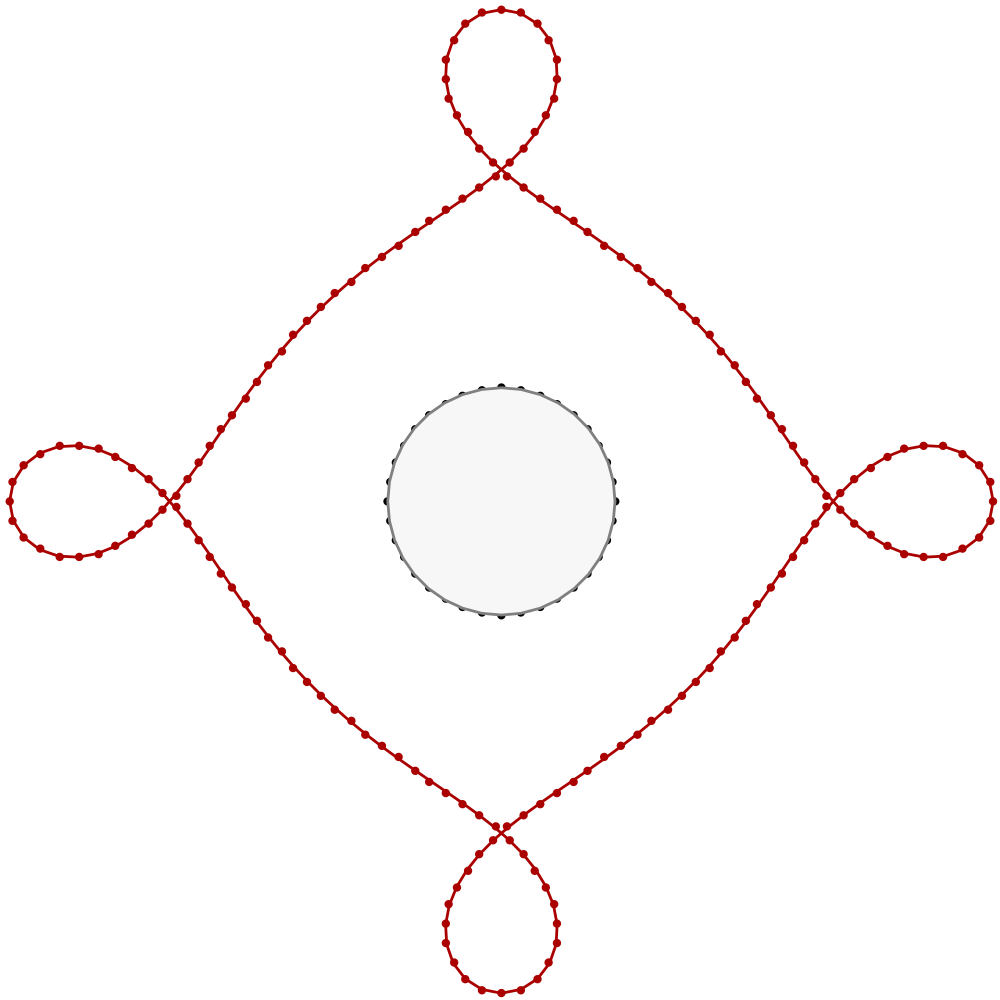}
	\end{minipage}
	\caption{Discrete circletons for $M = 36$, drawn with $\tau = \pi$ and $(k, \ell) = (1,2)$, $(1,3)$,
	 $(2,3)$, $(1,4)$, $(3,4)$, $(1,5)$, $(2,5)$, $(3,5)$, $(4,5)$ over $\ell$--fold cover of the circle.}
	\label{fig:discB2}
\end{figure}
	
	To find the Darboux transforms $\hat{x}$ that are also discrete arc-length polarised, we use Theorem~\ref{thm:discB} and require that $|ab^{-1}|^2 = \frac{1}{\mu}$ at $n = 0$, that is,
		\[
			|c^- + c^+|^2 = \frac{1}{4 \mu} |c^- (1-s) + c^+ (1+s)|^2
		\]
	implying that
		\[
			c^- + c^+ = \frac{e^{\ii \tau}}{2 \sqrt{\mu}} (c^- (1-s) + c^+ (1+s))
		\]
	for some $\tau \in \mathbb{R}$.
	Therefore, if $\mu \neq \frac{1}{4 \cos^2 \tau}$, we have $c^+ = \chi c^-$ with
		\[
			\chi = \frac{-2 \sqrt {\mu} + e^{\ii \tau}(1 - s)}{2\sqrt{\mu} - e^{\ii \tau}(1 + s)};
		\]
	otherwise, we have $c^- = 0$.
	Thus, all the planar bicycle transformations of the discrete circle are given by
		\[
			\hat{x}_n =\scalebox{0.85}{$\dfrac{-e^{\frac{2 \pi \ii}{M} n}\left(\chi (1 - s)(e^{\frac{2 \pi \ii}{M}}(1 + s) + (1 - s))^n +  (1+s)(e^{\frac{2 \pi \ii}{M}}(1 - s) + (1 + s))^n \right)}
				{ \chi  (1 + s)(e^{\frac{2 \pi \ii}{M}}(1 + s) + (1 - s))^n +(1-s)(e^{\frac{2 \pi \ii}{M}}(1 - s) + (1 + s))^n }$}.
		\]
	
	To find the \emph{discrete circletons}, i.e.\ closed bicycle correspondences of the discrete circle, we calculate the monodromy as before and require that $a$ is periodic over $\ell$--fold cover of $M$.
	Noting that $a^\pm_{n+\ell M} = a^\pm_n h^\pm$ where $h^\pm = a^\pm_{\ell M}$, we see that we have resonance points when $h^+ = h^-$, i.e.\
		\[
			\left(\frac{e^\frac{2\pi \ii}{M} (1 + s) + (1 - s)}{e^\frac{2\pi \ii}{M} (1 - s) + (1 + s)}\right)^{\ell M} = e^{2\pi \ii k}
		\]
	for some $k \in \mathbb{Z}$.
	Thus, for
		\[
			\mu =\frac{1}{4}\left(1 - \cot^2\frac{\pi}{M}\tan^2\frac{k \pi}{\ell M}\right).
		\]
	we obtain discrete circletons.
	Examples of discrete circletons with $M = 15$ was given in Figure~\ref{fig:discB3}; for other values of $M$, see Figures~\ref{fig:discB1} and \ref{fig:discB2}.
\end{example}

\section{Summary}

For a smooth integrable system, the introduction of the spectral parameter
allows one to consider an infinite system of linear partial differential
equations to solve partial differential equation of higher order. It
is this point of view which yields efficient discrete
models for higher order partial differential equations which form an
integrable system: rather than solving differential equations by typical numerical
methods, e.g., the Runge--Kutta method, discretising the integrable
system structure gives a solution
to the equation by recurrence equations, which can be easily
implemented and avoid singularities. 

In this paper we were concerned with  \emph{periodic} discrete
solutions of an integrable system.  Writing the system in terms of an
associated family of connections, new solutions to the underlying
partial differential equations are obtained from parallel sections of
the family of connections.  The question of finding periodic
smooth solutions can then be approached by finding parallel sections
with multipliers, Section~\ref{sect:two}. As an example of this
strategy, we provided a new
computational model for periodic, discrete (and smooth) solutions given by
the discrete Darboux
transformation for the special case of polarised curves, Section~\ref{sect:three}. The
integrable reductions of this system include the bicycle
correspondences, linking our results to the smoke ring flow, the
filament equations and the modified Korteweg-de Vries equations,
modeling shallow water waves.

In particular, we provided in Section~\ref{sect:three} all discrete, periodic planar and spatial
curves which are iso--spectral to the circle as well as all circletons
which preserve arc-length polarisations. The implementation with
quaternions is particularly relevant for obtaining explicit solutions to the recurrence equations in the case of discrete circles.

Our results serve as templates for generalisations to other integrable
systems, to provide computational models for periodic
solutions of problems in physics, chemistry and biology, for example
 in modelling shallow water waves, fiber optics applications, and
low-frequency collective motion in proteins and DNA.

\vspace{15pt}
\textbf{Acknowledgements.}
We are thankful to the referee for many indispensable comments.
We gratefully acknowledge the support from the Leverhulme Trust Network Grant IN-2016-019 and the JSPS Research Fellowships for Young Scientist 21K13799.

% \bib, bibdiv, biblist are defined by the amsrefs package.
\begin{bibdiv}
\begin{biblist}

\bib{bobenko_compact_2021}{article}{
      author={Bobenko, Alexander~I.},
      author={Hoffmann, Tim},
      author={{Sageman-Furnas}, Andrew~O.},
       title={Compact {{Bonnet}} pairs: isometric tori with the same
  curvatures},
        date={2021},
      eprint={2110.06335},
      url={http://arxiv.org/abs/2110.06335},
}

\bib{bobenko_discrete_1996-1}{article}{
      author={Bobenko, Alexander~I.},
      author={Pinkall, Ulrich},
       title={Discrete isothermic surfaces},
        date={1996},
     journal={J. Reine Angew. Math.},
      volume={475},
       pages={187\ndash 208},
      review={\MR{1396732}},
      doi={10.1515/crll.1996.475.187},
}

\bib{bobenko_discrete_1996}{article}{
      author={Bobenko, Alexander~I.},
      author={Pinkall, Ulrich},
       title={Discrete surfaces with constant negative {{Gaussian}} curvature
  and the {{Hirota}} equation},
        date={1996},
     journal={J. Differential Geom.},
      volume={43},
      number={3},
       pages={527\ndash 611},
      review={\MR{1412677}},
      doi={10.4310/jdg/1214458324},
}

\bib{bobenko_discrete_2008}{book}{
      author={Bobenko, Alexander~I.},
      author={Suris, Yuri~B.},
       title={Discrete differential geometry},
      series={Graduate {{Studies}} in {{Mathematics}}},
   publisher={{American Mathematical Society}},
     address={{Providence, RI}},
        date={2008},
      number={98},
        ISBN={978-0-8218-4700-8},
      review={\MR{2467378}},
}

\bib{bor_tire_2020}{article}{
      author={Bor, Gil},
      author={Levi, Mark},
      author={Perline, Ron},
      author={Tabachnikov, Sergei},
       title={Tire tracks and integrable curve evolution},
        date={2020},
     journal={Int. Math. Res. Not. IMRN},
      number={9},
       pages={2698\ndash 2768},
      review={\MR{4095423}},
      doi={10.1093/imrn/rny087},
}

\bib{bucking_constructing_2016}{incollection}{
      author={B{\"u}cking, Ulrike},
      author={Matthes, Daniel},
       title={Constructing solutions to the {{Bj\"orling}} problem for
  isothermic surfaces by structure preserving discretization},
        date={2016},
   book={
   	title={Advances in discrete differential geometry},
	editor={Bobenko, Alexander~I.},
	   publisher={{Springer}},
	     address={{Berlin}},},
	       pages={309\ndash 345},
      review={\MR{3587191}},
      doi={10.1007/978-3-662-50447-5_10},
}

\bib{burstall_notes_2017}{incollection}{
      author={Burstall, Francis~E.},
       title={Notes on transformations in integrable geometry},
        date={2017},
   book={title={Special metrics and group actions in geometry},
      editor={Chiossi, Simon~G.},
      editor={Fino, Anna},
      editor={Musso, Emilio},
      editor={Podest{\`a}, Fabio},
      editor={Vezzoni, Luigi},
      series={Springer {{INdAM Ser}}.},
      volume={23},
   publisher={{Springer}},
     address={{Cham}},}
       pages={59\ndash 80},
       doi={10.1007/978-3-319-67519-0_3},
       review={\MR{3751962}}
}

\bib{burstall_conformal_2010}{article}{
      author={Burstall, Francis~E.},
      author={Calderbank, David M.~J.},
       title={Conformal submanifold geometry {{I}}-{{III}}},
        date={2010},
      eprint={1006.5700},
      url={http://arxiv.org/abs/1006.5700},
}

\bib{burstall_discrete_2020}{article}{
      author={Burstall, Francis~E.},
      author={Cho, Joseph},
      author={Hertrich-Jeromin, Udo},
      author={Pember, Mason},
      author={Rossman, Wayne},
       title={Discrete $\Omega$-nets and Guichard nets via discrete Koenigs nets},
       date={2023},
      journal={Proc. Lond. Math. Soc. (3)},
      volume={126},
      number={2},
       pages={790–836},
      review={\MR{4550152}},
      doi={10.1112/plms.12499},
      
}

\bib{burstall_isothermic_2011}{article}{
      author={Burstall, Francis~E.},
      author={Donaldson, Neil~M.},
      author={Pedit, Franz},
      author={Pinkall, Ulrich},
       title={Isothermic submanifolds of symmetric $R$-spaces},
        date={2011},
     journal={J. Reine Angew. Math.},
      volume={660},
       pages={191–243},
      review={\MR{2855825}},
      doi={10.1515/crelle.2011.075},
}

\bib{burstall_conformal_2002}{book}{
      author={Burstall, Francis~E.},
      author={Ferus, Dirk},
      author={Leschke, Katrin},
      author={Pedit, Franz},
      author={Pinkall, Ulrich},
       title={Conformal geometry of surfaces in $S^4$ and quaternions},
      series={Lecture Notes in Mathematics},
   publisher={Springer-Verlag},
     address={Berlin},
        date={2002},
      volume={1772},
        ISBN={978-3-540-43008-7},
      review={\MR{1887131}},
      doi={10.1007/b82935},
}

\bib{burstall_semi-discrete_2016}{article}{
      author={Burstall, Francis~E.},
      author={{Hertrich-Jeromin}, Udo},
      author={M{\"u}ller, Christian},
      author={Rossman, Wayne},
       title={Semi-discrete isothermic surfaces},
        date={2016},
     journal={Geom. Dedicata},
      volume={183},
       pages={43\ndash 58},
      review={\MR{3523116}},
      doi={10.1007/s10711-016-0143-7},
}

\bib{burstall_discrete_2018}{article}{
      author={Burstall, Francis~E.},
      author={{Hertrich-Jeromin}, Udo},
      author={Rossman, Wayne},
       title={Discrete linear {{Weingarten}} surfaces},
        date={2018},
     journal={Nagoya Math. J.},
      volume={231},
       pages={55\ndash 88},
      review={\MR{3845588}},
      doi={10.1017/nmj.2017.11},
}

\bib{burstall_discrete_2014}{incollection}{
      author={Burstall, Francis~E.},
      author={{Hertrich-Jeromin}, Udo},
      author={Rossman, Wayne},
      author={Santos, Susana~D.},
       title={Discrete surfaces of constant mean curvature},
        date={2014},
   book={title={Development in differential geometry of submanifolds},
      editor={Kobayashi, Shim-Pei},
      series={{{RIMS K\^oky\^uroku}}},
      volume={1880},
   publisher={{Res. Inst. Math. Sci. (RIMS)}},
     address={{Kyoto}},},
       pages={133\ndash 179},
}

\bib{burstall_discrete_2015}{article}{
      author={Burstall, Francis~E.},
      author={{Hertrich-Jeromin}, Udo},
      author={Rossman, Wayne},
      author={Santos, Susana~D.},
       title={Discrete special isothermic surfaces},
        date={2015},
     journal={Geom. Dedicata},
      volume={174},
       pages={1\ndash 11},
      review={\MR{3303037}},
             doi={10.1007/s10711-014-0001-4},
}

\bib{cho_infinitesimal_2020}{article}{
      author={Cho, Joseph},
      author={Rossman, Wayne},
      author={Seno, Tomoya},
       title={Infinitesimal {{Darboux}} transformation and semi-discrete
         {{mKdV}} equation},
       journal={Nonlinearity},
       volume={35},
       number={4},
       pages={2134 \ndash 2146},
       year={2022},
       review={\MR{4407235}},
       doi={10.1088/1361-6544/ac591f}
}

\bib{cho_discrete_2021-1}{article}{
      author={Cho, Joseph},
      author={Rossman, Wayne},
      author={Seno, Tomoya},
       title={Discrete {{mKdV}} equation via {{Darboux}} transformation},
        date={2021},
     journal={Math. Phys. Anal. Geom.},
      volume={24},
      number={3},
       pages={25:1\ndash 11},
      review={\MR{4287306}},
      doi={10.1007/s11040-021-09398-y},
}

\bib{hasimoto_soliton_1972}{article}{
      author={Hasimoto, Hidenori},
       title={A soliton on a vortex filament},
        date={1972},
     journal={J. Fluid Mech.},
      volume={51},
      number={3},
       pages={477\ndash 485},
      review={\MR{3363420}},
      doi={10.1017/S0022112072002307},
}

\bib{hertrich-jeromin_introduction_2003}{book}{
      author={{Hertrich-Jeromin}, Udo},
       title={Introduction to {{M\"obius}} differential geometry},
      series={London {{Mathematical Society Lecture Note Series}}},
   publisher={{Cambridge University Press}},
     address={{Cambridge}},
        date={2003},
      volume={300},
      review={\MR{2004958}},
}

\bib{hertrich-jeromin_mobius_2001}{article}{
      author={{Hertrich-Jeromin}, Udo},
      author={{Musso}, Emilio},
      author={{Nicolodi}, Lorenzo},
       title={M\"{o}bius geometry of surfaces of constant mean curvature 1 in hyperbolic space},
        date={2001},
     journal={Ann. Global Anal. Geom.},
      volume={19},
      number={2},
       pages={185\ndash 205},
      review={\MR{1826401}},
      doi={10.1023/A:1010738712475},
}

\bib{hertrich-jeromin_remarks_1997}{article}{
      author={{Hertrich-Jeromin}, Udo},
      author={Pedit, Franz},
       title={Remarks on the {{Darboux}} transform of isothermic surfaces},
        date={1997},
     journal={Doc. Math.},
      volume={2},
       pages={313\ndash 333},
      review={\MR{1487467}},
}

\bib{hoffmann_discrete_2008}{incollection}{
      author={Hoffmann, Tim},
       title={Discrete {{Hashimoto}} surfaces and a doubly discrete smoke-ring
  flow},
        date={2008},
   book={title={Discrete differential geometry},
      editor={Bobenko, Alexander~I.},
      editor={Schr{\"o}der, Peter},
      editor={Sullivan, John~M.},
      editor={Ziegler, G{\"u}nter~M.},
      series={Oberwolfach {{Semin}}.},
      volume={38},
   publisher={{Birkh\"auser}},
     address={{Basel}},}
       pages={95\ndash 115},
      review={\MR{2405662}},
      doi={10.1007/978-3-7643-8621-4_5},
}

\bib{kilian_dressing_2015}{article}{
      author={Kilian, Martin},
       title={Dressing curves},
        date={2015},
      eprint={1508.00378},
      url={http://arxiv.org/abs/1508.00378}
}

\bib{levi_backlund_1980}{article}{
      author={Levi, D.},
      author={Benguria, R.},
       title={B\"acklund transformations and nonlinear differential difference
  equations},
        date={1980},
     journal={Proc. Nat. Acad. Sci. U.S.A.},
      volume={77},
      number={9, part 1},
       pages={5025\ndash 5027},
      review={\MR{587276}},
      doi={10.1073/pnas.77.9.5025},
}

\bib{matthaus_discrete_2003}{thesis}{
      author={Matth{\"a}us, Lars},
       title={Discrete curves with low spectral genus},
        type={Diplomarbeit},
        date={2003},
        organization={Technische Universität Berlin},
}

\bib{muller_semi-discrete_2013}{article}{
      author={M{\"u}ller, Christian},
      author={Wallner, Johannes},
       title={Semi-discrete isothermic surfaces},
        date={2013},
     journal={Results Math.},
      volume={63},
      number={3-4},
       pages={1395\ndash 1407},
      review={\MR{3057376}},
      doi={10.1007/s00025-012-0292-4},
}

\bib{musso_laguerre_1999}{article}{
      author={{Musso}, Emilio},
      author={{Nicolodi}, Lorenzo},
       title={Laguerre geometry of surfaces with plane lines of curvature},
        date={1999},
     journal={Abh. Math. Sem. Univ. Hamburg},
      volume={69},
       pages={123\ndash 138},
      review={\MR{1722926}},
      doi={10.1007/BF02940867},
}

\bib{pember_discrete_2021}{article}{
      author={Pember, Mason},
      author={Polly, Denis},
      author={Yasumoto, Masashi},
       title={Discrete {{Weierstrass}}-type representations},
        journal = {To appear in Discrete Comput. Geom.},
      eprint={2105.06774},
      url={http://arxiv.org/abs/2105.06774},
}

\bib{pinkall_new_2007}{article}{
      author={Pinkall, Ulrich},
      author={Springborn, Boris},
      author={{Wei{\ss}mann}, Steffen},
       title={A new doubly discrete analogue of smoke ring flow and the real
  time simulation of fluid flow},
        date={2007},
     journal={J. Phys. A},
      volume={40},
      number={42},
       pages={12563\ndash 12576},
      review={\MR{2392889}},
      doi={10.1088/1751-8113/40/42/S04},
}

\bib{pottmann_architectural_2007}{book}{
      author={Pottmann, Helmut},
      author={Asperl, Andreas},
      author={Hofer, Michael},
      author={Kilian, Axel},
       title={Architectural geometry},
   publisher={{Bentley Institute Press}},
     address={{Exton, PA}},
        date={2007},
        ISBN={978-1-934493-04-5},
}

\bib{quispel_linear_1984}{article}{
      author={Quispel, G. R.~W.},
      author={Nijhoff, F.~W.},
      author={Capel, H.~W.},
      author={{van der Linden}, J.},
       title={Linear integral equations and nonlinear difference-difference
  equations},
        date={1984},
     journal={Phys. A},
      volume={125},
      number={2-3},
       pages={344\ndash 380},
      review={\MR{761644}},
      doi={10.1016/0378-4371(84)90059-1},
}

\bib{tabachnikov_bicycle_2017}{article}{
      author={Tabachnikov, Serge},
       title={On the bicycle transformation and the filament equation: results
  and conjectures},
        date={2017},
     journal={J. Geom. Phys.},
      volume={115},
       pages={116\ndash 123},
      review={\MR{3623617}},
      doi={10.1016/j.geomphys.2016.05.013},
}

\bib{tabachnikov_discrete_2013}{article}{
      author={Tabachnikov, Serge},
      author={Tsukerman, E.},
       title={On the discrete bicycle transformation},
        date={2013},
     journal={Publ. Mat. Urug.},
      volume={14},
       pages={201\ndash 219},
      review={\MR{3235356}},
}

\end{biblist}
\end{bibdiv}

%%%%%%%%%%%%%%%%%%%%%%%%%
%\bibliography{closed}

\end{document}